\documentclass{amsart}
\usepackage{amscd}
\usepackage{amssymb}
\usepackage[T1]{fontenc}
\usepackage[utf8]{inputenc}
\usepackage{hyperref}
\usepackage[arrow,curve,matrix]{xy}
\usepackage{times}

\unitlength 1cm

\sloppy

\DeclareFontFamily{OMS}{rsfs}{\skewchar\font'60}
\DeclareFontShape{OMS}{rsfs}{m}{n}{<-5>rsfs5 <5-7>rsfs7 <7->rsfs10 }{}
\DeclareSymbolFont{rsfs}{OMS}{rsfs}{m}{n}
\DeclareSymbolFontAlphabet{\scr}{rsfs}

\renewcommand{\div}{\rm div}
\renewcommand{\P}{\mathbb{P}}
 
\renewcommand{\O}{\sO}
\newcommand{\Q}{\mathbb{Q}}

\newcommand{\sA}{\scr{A}}
\newcommand{\sB}{\scr{B}}
\newcommand{\sC}{\scr{C}}

\newcommand{\sF}{\scr{F}}
\newcommand{\sG}{\scr{G}}

\newcommand{\sO}{\scr{O}}

\newcommand{\bN}{\mathbb{N}}

\newcommand{\bQ}{\mathbb{Q}}

\newcommand{\cC}{\mathcal{C}}

\newcommand{\ga}{\gamma}


\newcommand{\la}{\lambda}

\newcommand{\GA}{\Gamma}

\newcommand{\OM}{\Omega}

\DeclareMathOperator{\Cl}{Cl}
\DeclareMathOperator{\coker}{coker}
\DeclareMathOperator{\Div}{Div}
\DeclareMathOperator{\discrep}{discrep}

\DeclareMathOperator{\Chow}{Chow}
\DeclareMathOperator{\codim}{codim}

\DeclareMathOperator{\hol}{hol}

\DeclareMathOperator{\Image}{Image}

\DeclareMathOperator{\Pic}{Pic}

\DeclareMathOperator{\rank}{rank}
\DeclareMathOperator{\red}{red}
\DeclareMathOperator{\reg}{reg}
\DeclareMathOperator{\sml}{small}

\DeclareMathOperator{\Sym}{Sym}
\DeclareMathOperator{\supp}{supp}
\DeclareMathOperator{\tor}{tor}
\DeclareMathOperator{\Var}{Var}

\newcommand{\into}{\hookrightarrow}

\newcommand{\iso}{\simeq}

\def\coh#1.#2.#3.{H^{#1}(#2,#3)}
\def\cohd#1.#2.#3.{H_{dR}^{#1}(#2,#3)}
\def\dimcoh#1.#2.#3.{h^{#1}(#2,#3)}
\def\loccoh#1.#2.#3.#4.{H^{#1}_{#2}(#3,#4)}
\def\loccohmod#1.#2.#3.{H^{#1}_{#2}(#3)}
\def\dimloccoh#1.#2.#3.#4.{h^{#1}_{#2}(#3,#4)}
\def\ses#1.#2.#3.{0  \longrightarrow  #1   \longrightarrow 
 #2 \longrightarrow #3 \longrightarrow 0} 
\def\sesshort#1.#2.#3.{0
 \rightarrow #1 \rightarrow #2 \rightarrow #3 \rightarrow 0}

\newcounter{thisthm}

\newcommand{\ilabel}[1]{\newcounter{#1}\setcounter{thisthm}{\value{thm}}\setcounter{#1}{\value{enumi}}}
\newcommand{\iref}[1]{(\thesection.\the\value{thisthm}.\the\value{#1})}

\theoremstyle{plain}    
\newtheorem{thm}{Theorem}[section]
\newtheorem{defn}[thm]{Definition}

\newtheorem{assumption}[thm]{Assumption} 
\newtheorem{setup}[thm]{Setup} 
\numberwithin{equation}{thm}
\numberwithin{figure}{section}
\theoremstyle{plain}    
\newtheorem{cor}[thm]{Corollary}
\newtheorem{cordefn}[thm]{Corollary and Definition}
\newtheorem{lem}[thm]{Lemma}
\newtheorem{conjecture}[thm]{Conjecture}

\theoremstyle{plain}    
\newtheorem{prop}[thm]{Proposition}
\newtheorem{proclaim-special}[thm]{\specialthmname}

\theoremstyle{remark}
\newtheorem{rem}[thm]{Remark}

\newtheorem{explanation}[thm]{Explanation}
\newtheorem{computation}[thm]{Computation}
\newtheorem{obs}[thm]{Observation}
\newtheorem{warning}[thm]{Warning}
\newtheorem{subrem}[equation]{Remark}
\newtheorem{subclaim}[equation]{Claim} 
\newtheorem*{claim*}{Claim} 
\newtheorem{notation}[thm]{Notation}
\newtheorem{convention}[thm]{Convention}

\newtheorem{construction-defn}[thm]{Construction and Definition}

\newtheoremstyle{bozont-remark}{3pt}{3pt}%
     {}
     {}
     {\it}
     {.}
     {.5em}
     {\thmname{#1}\thmnumber{ #2}: \thmnote{\sc #3}}
\theoremstyle{bozont-remark}

\def\factor#1.#2.{\left. \raise 2pt\hbox{$#1$} \right/\hskip -2pt\raise
  -2pt\hbox{$#2$}} 
\newlength{\swidth}
\setlength{\swidth}{\textwidth}
\addtolength{\swidth}{-,5\parindent}

\newenvironment{enumerate-p}{
  \begin{enumerate}}
  {\setcounter{equation}{\value{enumi}}\end{enumerate}}

\date{\today}

\author{Kelly Jabbusch}
\author{Stefan Kebekus}

\thanks{Kelly Jabbusch and Stefan Kebekus were supported by the
  DFG-Forschergruppe ``Classification of Algebraic Surfaces and Compact
  Complex Manifolds'' in full and in part, respectively. The work on this
  paper was finished while the authors visited the 2009 Special Year in
  Algebraic Geometry at the Mathematical Sciences Research Institute,
  Berkeley. Both authors would like to thank the MSRI for support.}
  
 \address{Kelly Jabbusch, Department of Mathematics, KTH, 10044 Stockholm, Sweden}
\email{\href{mailto:jabbusch@math.kth.se}{jabbusch@math.kth.se}}
 \urladdr{\href{http://www.math.kth.se/~jabbusch}{http://www.math.kth.se/~jabbusch}} 
  
\address{Stefan Kebekus, Mathematisches Institut, Albert-Ludwigs-Universit\"at
  Freiburg, Eckerstra{\ss}e 1, 79104 Freiburg im Breisgau, Germany}
\email{\href{mailto:stefan.kebekus@math.uni-freiburg.de}{stefan.kebekus@math.uni-freiburg.de}}
\urladdr{\href{http://home.mathematik.uni-freiburg.de/kebekus}{http://home.mathematik.uni-freiburg.de/kebekus}}

\setcounter{tocdepth}{1}


\title{Families over special base manifolds and a conjecture of Campana}

\date{\today}
\begin{document}

\begin{abstract}
  Consider a smooth, projective family of canonically polarized varieties over
  a smooth, quasi-projective base manifold $Y$, all defined over the complex
  numbers. It has been conjectured that the family is necessarily isotrivial
  if $Y$ is special in the sense of Campana. We prove the conjecture when $Y$
  is a surface or threefold.
  
  The proof uses sheaves of symmetric differentials associated to fractional
  boundary divisors on log canonical spaces, as introduced by Campana in his
  theory of \emph{Orbifoldes G\'eom\'etriques}. We discuss a weak variant of
  the Harder-Narasimhan Filtration and prove a version of the
  Bogomolov-Sommese Vanishing Theorem that take the additional fractional
  positivity along the boundary into account. A brief, but self-contained
  introduction to Campana's theory is included for the reader's convenience.
\end{abstract}

\maketitle
\tableofcontents

\section{Introduction and statement of main result}
\label{sec:intro}

\subsection{Introduction}

Complex varieties are traditionally classified by their Kodaira-Iitaka
dimension. A smooth, projective variety $Y$ is said to be of ``general type''
if the Kodaira-Iitaka dimension of the canonical bundle is maximal,
i.e.~$\kappa(\Omega^{\dim Y}_Y) = \dim Y$. Refining the distinction between
``general type'' and ``other,'' Campana suggested in a series of remarkable
papers to consider the class of ``special'' varieties $Y$, characterized by
the fact that the Kodaira-Iitaka dimension $\kappa(\sA)$ is small whenever
$\sA$ is an invertible subsheaf of $\Omega^p_Y$, for some $p$. Replacing
$\Omega^p_Y$ with the sheaf of logarithmic differentials, the notion also
makes sense for quasi-projective varieties.

Conjecturally, special varieties have a number of good topological,
geometrical and arithmetic properties. In particular, Campana conjectured
that any map from a special quasi-projective variety to the moduli stack of
canonically polarized manifolds is necessarily constant. Equivalently, it is
conjectured that any smooth projective family of canonically polarized
manifolds over a special quasi-projective base variety is necessarily
isotrivial. This generalizes the classical Shafarevich Hyperbolicity Theorem
and recent results obtained for families over base manifolds that are not of
general type, cf.~\cite{KK08, KK08c} and the references therein.

In this paper, we prove Campana's conjecture for quasi-projective base
manifolds $Y^\circ$ of dimension $\dim Y^\circ \leq 3$.  Throughout the
present paper we work over the field of complex numbers.

\subsection{Main result}

Before formulating the main result in Theorem~\ref{thm:main} below, we briefly
recall the precise definition of a special logarithmic pair. The classical
Bogomolov-Sommese Vanishing Theorem is our starting point.

\begin{thm}[\protect{Bogomolov-Sommese Vanishing Theorem, \cite[Sect.~6]{EV92}}]\label{thm:classBSv}
  Let $Y$ be a smooth projective variety and $D \subset Y$ a reduced, possibly
  empty divisor with simple normal crossings. If $p \leq \dim Y$ is any number
  and $\sA \subseteq \Omega^p_Y(\log D)$ any invertible subsheaf, then the
  Kodaira-Iitaka dimension of $\sA$ is at most $p$, i.e., $\kappa(\sA) \leq
  p$. \qed
\end{thm}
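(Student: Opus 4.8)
The plan is to argue by contradiction: set $k := \kappa(\sA)$ and assume $k \geq p+1$, aiming for a contradiction with the fact that $Y$ carries only logarithmic $p$-forms. First I would replace $\sA$ by its saturation in $\Omega^p_Y(\log D)$, which does not change $\kappa(\sA)$ but makes the quotient torsion-free, so that $\sA$ restricts well to general subvarieties. Next I would pass to a resolution of the Iitaka fibration of $\sA$: after a log-blow-up $\pi\colon Y' \to Y$ (keeping $D' := \pi^{-1}(D)$ simple normal crossing) there is a morphism $f\colon Y' \to Z$ onto a smooth projective variety with $\dim Z = k$, a big $\bQ$-line bundle $\sB$ on $Z$ controlling $\pi^*\sA$ in the horizontal directions, and such that $\pi^*\sA$ restricted to a general fibre $F$ of $f$ has Kodaira-Iitaka dimension $0$.

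The geometric heart is to show that the twisted form descends to the base. The inclusion yields a nonzero section $\omega \in H^0\bigl(Y', \Omega^p_{Y'}(\log D') \otimes (\pi^*\sA)^{-1}\bigr)$. The exact sequence $0 \to f^*\Omega^1_Z \to \Omega^1_{Y'}(\log D') \to \Omega^1_{Y'/Z}(\log) \to 0$ induces a filtration of $\Omega^p_{Y'}(\log D')$ whose graded pieces are $f^*\Omega^i_Z \otimes \Omega^{p-i}_{Y'/Z}(\log)$. I would argue that $\omega$ lands in the bottom, purely horizontal piece $f^*\Omega^p_Z(\log \bar D) \otimes (\pi^*\sA)^{-1}$: any nonzero relative component of $\omega$, restricted to a general fibre $F$, would exhibit $\pi^*\sA|_F$ as a nonzero subsheaf built from relative differentials, and combined with the bigness of $\sB$ in the base directions this contradicts $\kappa(\pi^*\sA|_F)=0$. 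Once $\omega$ is horizontal it descends to a nonzero $\bar\omega \in H^0\bigl(Z, \Omega^p_Z(\log \bar D)\otimes \sB^{-1}\bigr)$, i.e.\ to an inclusion $\sB \into \Omega^p_Z(\log \bar D)$ of the \emph{big} line bundle $\sB$ on the $k$-dimensional variety $Z$. This reduces everything to the case $\kappa = \dim$, and I expect verifying this descent (controlling the relative components of $\omega$) to be the main obstacle.

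It then remains to treat the base case: a big invertible subsheaf $\sB \subseteq \Omega^p_Z(\log \bar D)$ on a variety $Z$ with $\dim Z = k$ forces $k \leq p$. Here I would untwist $\bar\omega$ by a cyclic cover: choosing $m \gg 0$ and a section of $\sB^{\otimes m}$, form the associated degree-$m$ cover $\varrho\colon W \to Z$ (resolved, with $\bar D$ pulled back to simple normal crossing), on which $\varrho^*\bar\omega$ becomes an \emph{honest} logarithmic $p$-form $\Omega \in H^0(W,\Omega^p_W(\log))$ generating a big subsheaf. Since global logarithmic forms are closed, $d\Omega = 0$, and Bogomolov's analysis of the subalgebra of $\bigoplus_q \Omega^q_W(\log)$ generated by $\Omega$ together with the many sections of the big bundle forces the support to have dimension at most $p$; alternatively one invokes the logarithmic Hodge-theoretic vanishing of \cite{EV92} to see that $H^0\bigl(Z,\Omega^p_Z(\log\bar D)\otimes \sB^{-1}\bigr)=0$ once $\sB$ is big and $p < \dim Z$. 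Either way $k \leq p$, contradicting $k \geq p+1$ and completing the proof.
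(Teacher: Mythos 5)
Your reduction to the base of the Iitaka fibration is where the argument breaks, and it is exactly the step you yourself flag as the main obstacle. To place $\omega$ in the bottom piece $f^*\Omega^p_Z(\log \bar D)\otimes(\pi^*\sA)^{-1}$ of the filtration, you must kill, for each $i<p$, the induced section of $f^*\Omega^i_Z\otimes\Omega^{p-i}_{Y'/Z}(\log)\otimes(\pi^*\sA)^{-1}$. But restricting such a component to a general fibre $F$ only produces a nonzero map from $(\pi^*\sA)|_F$ into a direct sum of copies of $\Omega^{p-i}_F(\log D'|_F)$, i.e.\ it exhibits a line bundle of Kodaira-Iitaka dimension zero as a subsheaf of logarithmic differentials on $F$. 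That is no contradiction: subsheaves with $\kappa=0$ exist in abundance inside $\Omega^{p-i}_F(\log)$, and the inductively available bound $\kappa\leq p-i$ is vacuous against $\kappa=0$. Your appeal to ``the bigness of $\sB$ in the base directions'' cannot rescue this, because restriction to $F$ annihilates all base directions, so bigness of $\sB$ never interacts with the fibre computation. There is a second, related defect: no line bundle $\sB$ with $\pi^*\sA\sim f^*\sB$ exists in general, since on the Iitaka fibration $(\pi^*\sA)|_F$ has $\kappa=0$ but need not be trivial; and if $(\pi^*\sA)|_F\not\cong\sO_F$, then $f_*\bigl((\pi^*\sA)^{-1}\bigr)=0$, so the horizontal piece has no global sections at all and the ``nonzero $\bar\omega$'' you descend to cannot exist as described.

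The good news is that your final paragraph, run directly on $Y$ rather than on a descended base, is essentially the actual proof --- and is how \cite[Sect.~6]{EV92} proves the statement (the paper itself gives no proof, only this citation; Esnault and Viehweg in fact establish the stronger vanishing $H^0\bigl(Y,\,\Omega^p_Y(\log D)\otimes\sA^{-1}\bigr)=0$ whenever $p<\kappa(\sA)$, from which the theorem is immediate). Concretely: if $\kappa(\sA)=k>p$, choose rational functions $f_1,\dots,f_k$ pulled back from coordinates on the $k$-dimensional Iitaka image, so that $df_1,\dots,df_k$ are independent at a general point. Untwist by your cyclic cover $\varrho\colon W\to Y$ with $t^N=\varrho^*s$ for a section $s$ of $\sA^{\otimes N}$: after the pole bookkeeping of the covering construction, both $t\cdot\varrho^*\omega$ and each $f_j\cdot t\cdot\varrho^*\omega$ become global logarithmic $p$-forms on a resolution of $W$, hence closed by Deligne's theorem, and subtracting gives $df_j\wedge\bigl(t\cdot\varrho^*\omega\bigr)=0$ for all $j$. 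Since at any point a nonzero $p$-form is annihilated under wedge product by a space of covectors of dimension at most $p$, the $k>p$ generically independent differentials $df_j$ force $\omega=0$, contradicting the inclusion $\sA\into\Omega^p_Y(\log D)$. This makes the Iitaka descent you placed in front not only unjustified but unnecessary.
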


In a nutshell, we say that a pair $(Y, D)$ is special if the inequality in the
Bogomolov-Sommese Vanishing Theorem is always strict.

\begin{defn}[Special logarithmic pair]\label{def:speciallog}
  In the setup of Theorem~\ref{thm:classBSv}, a pair $(Y, D)$ is called
  \emph{special} if the strict inequality $\kappa(\sA) < p$ holds for all $p$
  and all invertible sheaves $\sA \subseteq \Omega^p_Y(\log D)$. A smooth,
  quasi-projective variety $Y^\circ$ is called special if there exists a
  smooth compactification $Y$ such that $D:= Y \setminus Y^\circ$ is a divisor
  with simple normal crossings and such that the pair $(Y,D)$ is special.
\end{defn}

\begin{rem}[Special quasi-projective variety]
  It is an elementary fact that if $Y^\circ$ is a smooth, quasi-projective
  variety and $Y_1$, $Y_2$ two smooth compactifications such that $D_i := Y_i
  \setminus Y^\circ$ are divisors with simple normal crossings, then $(Y_1,
  D_1)$ is special if and only if $(Y_2, D_2)$ is. The notion of special
  should thus be seen as a property of the quasi-projective variety $Y^\circ$.
\end{rem}

With this notation in place, Campana's conjecture is then formulated as
follows.

\begin{conjecture}[\protect{Generalization of Shafarevich Hyperbolicity, \cite[Conj.~12.19]{Cam07}}]\label{conj:campana}
  Let $f ^\circ: X^\circ \to Y^\circ$ be a smooth family of canonically polarized
  varieties over a smooth quasi-projective base. If $Y^\circ$ is special, then
  the family $f^\circ$ is isotrivial.
\end{conjecture}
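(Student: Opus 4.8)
The plan is to argue by contradiction, so assume that $f^\circ$ is \emph{not} isotrivial, equivalently that the variation satisfies $\Var(f^\circ) > 0$. Fix a smooth compactification $Y \supseteq Y^\circ$ for which $D := Y \setminus Y^\circ$ has simple normal crossings and $(Y,D)$ is special, and complete $f^\circ$ to a family over $Y$. The first step is to invoke the theory of Viehweg--Zuo: from the positive variation one obtains, for a suitable $N > 0$, an invertible subsheaf
\[
\sA \hookrightarrow \Sym^N \Omega^1_Y(\log D), \qquad \kappa(\sA) \geq \Var(f^\circ) > 0.
\]
The crucial point is that this construction yields more than mere logarithmic positivity: the sheaf $\sA$ carries additional \emph{fractional} positivity along $D$, so that it is genuinely a subsheaf of a symmetric power of Campana's orbifold cotangent sheaf $\Omega^1_{(Y,\Delta)}$ for an effective boundary $\Delta$ supported on $D$. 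This orbifold refinement is precisely what must be exploited; the naive logarithmic version is too weak to reach a contradiction.

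The central difficulty is that specialness of $(Y,D)$ forbids positive invertible subsheaves of the \emph{exterior} powers $\Omega^p_Y(\log D)$, whereas the Viehweg--Zuo sheaf lives in a \emph{symmetric} power, and passing from one to the other loses track of positivity. Bridging this gap is the heart of the argument and is where the two technical tools advertised in the abstract are needed. First I would develop a weak Harder--Narasimhan filtration of $\Omega^1_{(Y,\Delta)}$ with respect to a movable class adapted to $\sA$; the existence of the positively curved symmetric differential then forces the maximal destabilizing piece $\sF \subseteq \Omega^1_{(Y,\Delta)}$ to be nontrivial and of positive slope. Writing $r := \rank \sF$ and passing to the saturation, its determinant embeds as an \emph{invertible} orbifold $r$-form,
\[
\det \sF = \textstyle\bigwedge^r \sF \hookrightarrow \Omega^r_{(Y,\Delta)}.
\]
A refined Bogomolov--Sommese Vanishing Theorem, formulated for orbifold differentials so as to account for the fractional positivity along $D$, then provides the systematic control of Kodaira--Iitaka dimensions of such subsheaves that is needed throughout.

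Granting that one can upgrade the positive slope of $\sF$ to the sharp inequality $\kappa(\det \sF) \geq r$, the contradiction is immediate: $\det \sF$ is an invertible subsheaf of the orbifold $r$-forms with $\kappa \geq r$, which---after translating back through Campana's dictionary between the orbifold (C-pair) formulation and Definition~\ref{def:speciallog}---violates the special hypothesis on $(Y,D)$. The main obstacle, and the reason the theorem is proved only for $\dim Y \leq 3$, lies exactly in establishing $\kappa(\det \sF) \geq r$ in the borderline cases $1 < r < \dim Y$, where positive slope alone gives only $\kappa(\det \sF) \geq 1$. There the destabilizing subsheaf defines a Campana fibration, and pinning down the Kodaira--Iitaka dimension of $\det \sF$ requires an orbifold subadditivity statement of $C_{n,m}$-type for the associated orbifold base. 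Such additivity results are available only when the fibers and base have small dimension---which covers surfaces and threefolds but is open in general---so I expect essentially all the real work to be concentrated in verifying these low-dimensional cases and in setting up the weak filtration carefully enough that the destabilizing sheaf is saturated and its determinant inherits the required positivity.
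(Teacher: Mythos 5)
There is a genuine gap, and it sits at the very center of your outline: you have misplaced where the fractional (orbifold) structure lives and, as a result, you are missing the descent step that makes the whole argument possible. The refinement of Viehweg--Zuo that the paper uses (Theorem~\ref{thm:VZimproved}, from \cite{JK09}) is \emph{not} that $\sA$ carries extra fractional positivity along $D$ inside some $\Omega^1_{(Y,\Delta)}$ on $Y$; it is that $\sA \subseteq \Sym^m\sB$, where $\sB$ is the saturation of $\Image(d\mu)$ in $\Omega^1_Y(\log D)$ --- i.e.\ that $\sA$ comes from the moduli space. This is then exploited by factoring the moduli map as $Y \xrightarrow{\,\pi\,} Z \to \overline{\mathfrak M}$ with $\dim Z = \Var(f^\circ)$, and pushing $\sA$ \emph{down} (Corollary~\ref{cor:pdvzs}) to a rank-one sheaf $\sA_Z \subseteq \Sym^{[m]}_{\cC}\Omega^1_Z(\log\Delta)$ with $\kappa_{\cC}(\sA_Z) = \dim Z$, where $(Z,\Delta)$ is the $\cC$-base of $\pi$ (Construction~\ref{cons:cdefn}): the fractional boundary $\Delta$ records multiplicities of fibers over the discriminant on $Z$, not anything along $D$ on $Y$. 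Specialness of $(Y,D)$ descends to specialness of the $\cC$-pair $(Z,\Delta)$ (Corollary~\ref{cor:baseofspecialisspecial}), and since $Y^\circ$ special forces $\Var(f^\circ) < \dim Y^\circ \leq 3$ by \cite{KK08c}, one lands on a base with $\dim Z \leq 2$ carrying a sheaf of \emph{maximal} $\cC$-Kodaira dimension. Without this descent your contradiction scheme cannot start whenever $\Var(f^\circ) < \dim Y$: Viehweg--Zuo only gives $\kappa(\sA) \geq \Var(f^\circ)$, and on $Y$ itself a rank-one subsheaf of $p$-forms with, say, $\kappa = 1 < p$ is perfectly compatible with Definition~\ref{def:speciallog}, so nothing forces the violation of specialness you aim for.

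The second gap is your key step ``upgrade positive slope of $\sF$ to $\kappa(\det\sF) \geq r$,'' for which you offer no mechanism beyond an appeal to orbifold $C_{n,m}$-subadditivity; no such statement is proved or used in the paper, and the dimension restriction does not come from there. What the paper actually does on the surface pair $(Z,\Delta)$ is run the MMP and split into cases by $\kappa(K_Z+\Delta)$. The weak Harder--Narasimhan filtration (Proposition~\ref{prop:HNF}, proved via adapted cyclic covers and Mehta--Ramanathan restriction to complete intersection curves, not a movable-class filtration) is used only in the $\bQ$-Fano case $\rho = 1$: there it produces a rank-one $\sB \subset \Sym^{[1]}_{\cC}\Omega^p$ of positive $\cC$-slope, which by $\rho=1$ and $\bQ$-factoriality is $\bQ$-ample, hence $\kappa_{\cC}(\sB) = \dim Z > p$, contradicting the $\cC$-pair Bogomolov--Sommese theorem (Proposition~\ref{prop:BSvan}; see Corollary~\ref{cor:rhoX}) --- and it is this vanishing theorem, known for log canonical pairs only in dimension $\leq 3$ via \cite{GKK08}, together with the \cite{KK08c} bound on the variation and the two-dimensional MMP, that accounts for the hypothesis $\dim Y^\circ \leq 3$. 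The remaining cases (Mori fiber space, Iitaka fibration, and the $\kappa = 0$ case after an $\varepsilon$-perturbation of the boundary) are disposed of not by subadditivity but by the anti-nefness criterion of Proposition~\ref{prop:gennbld} restricted to general fibers of the contraction, which contradicts $\kappa_{\cC}(\sA_\lambda) = 2$. So both the positivity transfer you need and the claimed source of the dimension restriction are, as stated, unsupported.
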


\begin{thm}[Campana's conjecture in dimension three]\label{thm:main}
  Conjecture~\ref{conj:campana} holds if $\dim Y^\circ \leq 3$.
\end{thm}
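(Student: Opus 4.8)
The plan is to argue by contradiction: assume that $f^\circ$ is not isotrivial, so that the variation $v := \Var(f^\circ)$ is positive, and deduce that the base cannot be special. The engine is the Viehweg--Zuo construction, which for a non-isotrivial family produces, on a suitable smooth compactification $(Y,D)$ with $D := Y \setminus Y^\circ$ of simple normal crossings, an integer $n \geq 1$ and an invertible subsheaf $\sA \subseteq \Sym^n \Omega^1_Y(\log D)$ whose Kodaira--Iitaka dimension satisfies $\kappa(\sA) \geq v$. The whole difficulty is that this positivity lives in a symmetric power of $\Omega^1$, whereas Definition~\ref{def:speciallog} is phrased in terms of invertible subsheaves of the single sheaves $\Omega^p_Y(\log D)$. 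The aim, therefore, is to manufacture from $\sA$ an invertible subsheaf $\sB \subseteq \Omega^p_Y(\log D)$ with $\kappa(\sB) = p$ for some $p \geq 1$ --- a ``Bogomolov sheaf'' --- which by Theorem~\ref{thm:classBSv} realizes the forbidden equality and hence contradicts specialness.

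First I would reduce to the case of maximal variation on an orbifold base. Factoring the moduli map associated to $f^\circ$ yields a fibration of $Y^\circ$ onto a base of dimension exactly $v$; recording the multiplicities of its fibres as a fractional boundary $\Delta$ produces an orbifold base $(Z, \Delta)$ of dimension $v \leq 3$ over which the induced family has maximal variation. Here Campana's theory enters twice: on the one hand, the orbifold base of a fibration emanating from a special manifold is again special, so it suffices to contradict the specialness of $(Z,\Delta)$; on the other hand, the Viehweg--Zuo sheaf is naturally a big invertible subsheaf of the \emph{orbifold} symmetric differentials attached to $(Z, \Delta)$, which is why fractional boundaries must be carried along throughout.

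The heart of the argument, and the step I expect to be the main obstacle, is the passage from a big invertible subsheaf of $\Sym^n \Omega^1_Z$ (in the orbifold sense) to an honest Bogomolov subsheaf of some $\Omega^p_Z$. My plan is to introduce a weak Harder--Narasimhan filtration of $\Omega^1_Z(\log\lceil\Delta\rceil)$ with respect to a suitable movable class: the bigness of $\sA$ forces the maximal destabilizing subsheaf $\sF$ to have positive slope, and the determinant $\det \sF$, viewed as an invertible subsheaf of $\Omega^{\rk \sF}_Z$, is the candidate Bogomolov sheaf. The crux will be to show that positivity of the \emph{slope} of $\sF$ upgrades to the equality $\kappa(\det \sF) = \rk \sF$, i.e.\ that symmetric positivity descends to wedge positivity; this requires a version of the Bogomolov--Sommese Vanishing Theorem sensitive to the fractional positivity along $\Delta$, and it is precisely in controlling $\det \sF$ that the hypothesis $\dim Z \leq 3$ becomes indispensable, since there the relevant subsheaves of the cotangent sheaf and their determinants can be analysed by surface- and threefold-specific methods.

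Once the Bogomolov sheaf $\sB \subseteq \Omega^p_Z$ with $\kappa(\sB) = p$ is produced, it contradicts the specialness of $(Z,\Delta)$ and hence of $Y^\circ$, completing the proof. I would organize the write-up so that the two general tools --- the weak Harder--Narasimhan filtration and the refined, orbifold Bogomolov--Sommese vanishing --- are established first in arbitrary dimension, with the dimension restriction entering only at the final step, where the determinant of the destabilizing sheaf must be shown to have maximal Kodaira--Iitaka dimension.
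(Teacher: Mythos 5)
Your overall architecture --- Viehweg--Zuo sheaf, descent to the orbifold base $(Z,\Delta)$ (the paper's Corollaries~\ref{cor:baseofspecialisspecial} and \ref{cor:pdvzs}), a weak Harder--Narasimhan filtration, and an orbifold Bogomolov--Sommese theorem --- matches the paper's toolkit, but the step you yourself flag as the crux contains a genuine gap, and the paper does not fill it the way you hope. You propose to show that positive slope of the maximal destabilizing subsheaf $\sF$ ``upgrades'' to $\kappa(\det\sF) = \rank\sF$, invoking a fractional Bogomolov--Sommese theorem. This cannot work as stated: slope positivity with respect to a polarization produces no sections whatsoever (a line bundle of positive degree on a surface can have $\kappa = -\infty$), and Bogomolov--Sommese vanishing (Proposition~\ref{prop:BSvan}) is an \emph{upper} bound $\kappa_{\cC} \leq p$; it can never supply the lower bound $\kappa \geq p$ that your ``Bogomolov sheaf'' requires. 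Moreover, for the destabilizing subsheaf to have positive slope in the first place you need the degree of the $\cC$-cotangent sheaf to be nonpositive, i.e.\ $(K_Z+\Delta).A^{n-1}\le 0$ (the hypothesis of Proposition~\ref{prop:HNF}); specialness of $(Z,\Delta)$ constrains the Kodaira dimension of $K_Z+\Delta$ but not its degree against an arbitrary polarization or movable class, so this hypothesis is simply not available on the given model.

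The paper resolves both problems with the minimal model program, which is entirely absent from your plan. First, since $Y^\circ$ is special it is not of log general type, so $\Var(f^\circ)<\dim Y^\circ$ by \cite[Thm.~1.1]{KK08c}, whence $\dim Z\le 2$ (you would otherwise also have to treat $\dim Z = 3$); the case $\dim Z=1$ is immediate. For $\dim Z=2$ one runs the MMP on the dlt pair $(Z,\Delta)$, pushing the Viehweg--Zuo sheaf down via Lemma~\ref{VZbiratl}, and argues by cases on $\kappa(K_Z+\Delta)$. The slope filtration of Proposition~\ref{prop:HNF} is used only in the Picard-number-one case, where $(K+\Delta).A\le 0$ holds because the log canonical class is anti-ample, and where --- this is the mechanism you are missing --- $\rho=1$ together with $\Q$-factoriality forces any rank-one sheaf of positive degree to be $\Q$-ample, so that $\kappa_{\cC}(\sB)=\dim Z$ by Corollary~\ref{cor:kappaofample}; the contradiction (Corollary~\ref{cor:rhoX}) is then against the BSV \emph{upper} bound $\kappa_{\cC}(\sB)\le p<\dim Z$, not via an exact equality $\kappa=p$. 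The remaining cases (Mori fiber space, $\kappa(K_Z+\Delta)=0$ after reducing the boundary, Iitaka fibration) are handled by a completely different mechanism: restriction to a general fiber $F$, where $N_{F/Z}$ and $-(K_F+\Delta|_F)$ are nef, so that $\Sym^{[m']}_{\cC}\Omega^1_Z(\log\Delta)|_F$ is anti-nef (Proposition~\ref{prop:gennbld}), contradicting $\kappa_{\cC}(\sA_\lambda)=2$. A further technical point: the Harder--Narasimhan filtration must be taken on the adapted differentials $\Omega^{[1]}_Y(\log D_{\gamma})_{\rm adpt}$ on an adapted cover, so that the fractional positivity along $\Delta$ (recorded by the defect divisor and $\Div_{\cC}$) is retained, not on $\Omega^1_Z(\log\lceil\Delta\rceil)$ itself as you suggest --- the latter discards exactly the positivity the argument needs. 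Without the MMP reduction and the fiberwise anti-nefness argument, your proposal has no route from slope positivity to a contradiction with specialness.
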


\begin{subrem}
  In the case of $\dim Y^\circ =2$, Conjecture~\ref{conj:campana} is claimed
  in \cite[Thm.~12.20]{Cam07}. However, we had difficulties following the
  proof, and offered a new proof of Campana's conjecture in dimension two,
  \cite[Cor.~4.5]{JK09}.
\end{subrem}

\begin{rem}
  In analogy to the maximally rationally connected fibration, Campana proves
  the existence of a quasi-holomorphic ``core map'', $c : Y^\circ \dasharrow
  C(Y^\circ)$, which is characterized by the fact that its fibers are special
  any by a certain maximality property. One equivalent reformulation of
  Conjecture~\ref{conj:campana} is that the core map always factors the moduli
  map $\mu : Y^\circ \to \mathfrak M$, i.e., that there exists a commutative
  diagram of rational maps
  $$
  \xymatrix{
    && C(Y^\circ) \ar@{-->}@/^0.2cm/[rrd] \\
    Y^\circ \ar[rrrr]_{\mu} \ar@{-->}@/^0.2cm/[rru]^{c} &&&& \mathfrak M.
  }
  $$
\end{rem}

\subsection{Outline of the paper}

In Part~\ref{P1} of this paper, we introduce the notion of $\cC$-pairs, also
called \emph{Orbifoldes G\'eom\'etriques} by Campana, and prove a number of
basic results that will be important later. The notion of a $\cC$-pair offers
the formal framework suitable for the discussion of differentials on charts of
moduli stacks and on the associated coarse moduli spaces.
Section~\ref{sec:orbifold} contains a brief introduction to $\cC$-pairs and
their use for our purposes.  Several sheaves of differentials and the
associated version of Kodaira-Iitaka dimension for subsheaves of
$\cC$-differentials are also introduced.

Even though our presentation differs from that of Campana's papers, most of
the material covered in Part~\ref{P1} is not new and appears, e.g., in
\cite{Cam07}. We have chosen to include a complete and entirely self-contained
introduction because we found some parts of \cite{Cam07} hard to read, and
because some of the basic notions have still not found their final form in the
literature.

In contrast, the results of Part~\ref{P2} are new to the best of our
knowledge. In Section~\ref{sec:slope}, we discuss a weak variant of the
Harder-Narasimhan Filtration that works for sheaves of $\cC$-differentials and
takes the extra fractional positivity of these sheaves into account. Even
though we believe that a refinement of the Harder-Narasimhan Filtration works
in the more general context of vector bundles with fractional elementary
transformations, and might be of independent interest, we develop the theory
only to the absolute minimum required to prove Theorem~\ref{thm:main}.

In Section~\ref{sec:BSv}, we generalize the classical Bogomolov-Sommese
Vanishing Theorem~\ref{thm:classBSv} to sheaves of $\cC$-differentials on
$\cC$-pairs with log canonical singularities. Again, this is a generalization
of the results obtained in \cite{GKK08} that respects the fractional
positivity along the boundary.

In Part~\ref{P3} we prove Theorem~\ref{thm:main}. To prepare for the proof we
recall in Section~\ref{sec:VZ} a recent refinement of Viehweg-Zuo's
fundamental positivity result: if the family $f^\circ$ is non-isotrivial and
if $Y$ is any smooth compactification of $Y^\circ$ such that $D:= Y \setminus
Y^\circ$ is a divisor with simple normal crossings, then there exists a number
$m \gg 0$ and an invertible subsheaf $\sA \subseteq \Sym^m \OM_Y^1(\log D)$ of
positive Kodaira-Iitaka dimension. In the appropriate orbifold sense, this
``Viehweg-Zuo sheaf'' $\sA$ comes from the moduli space. One of the main
difficulties in the proof of Theorem~\ref{thm:main} is that special pairs are
defined in terms of subsheaves in $\Omega^1_Y(\log D)$, while Viehweg-Zuo's
result only gives subsheaves of high symmetric products $\Sym^m
\Omega^1_Y(\log D)$.

To give an idea of the proof, consider the simple setup where $Y = Y^\circ$ is
compact and admits a morphism $\gamma: Y \to Z$ to a curve such that the
family $f^\circ$ is the pull-back of a smooth family that lives over
$Z$. Applied to the family over the one-dimensional space $Z$, the Viehweg-Zuo
result implies that $\Omega^1_Z$ is ample, so that the inclusion
$\gamma^*\Omega^1_Z \subseteq \Omega^1_Y$ immediately shows that $Y$ cannot be
special. Since all sheaves constructed by Viehweg and Zuo really come from the 
moduli space, a more elaborate version of this argument can in fact be used to
deal with all cases of Theorem~\ref{thm:main} where the moduli map has a
one-dimensional image. For moduli maps with higher-dimensional images, minimal
model theory gives the link between the existence of $\sA$ and positivity of
subsheaves in $\Omega^1_Y(\log D)$.

\subsection*{Acknowledgments}

Conjecture~\ref{conj:campana} was brought to our attention by Fr\'ed\'eric
Campana during the 2007 Levico conference in Algebraic Geometry. We would like
to thank him for a number of discussions on the subject.

\part{$\mathbb{\cC}$-PAIRS AND THEIR DIFFERENTIALS}\label{P1}

\section{$\cC$-pairs, adapted morphisms and covers}
\label{sec:orbifold}

\subsection{$\cC$-pairs, introduction and definitions}
\label{sec:cPintro}

Let $\gamma: Y \to X$ be a finite morphism of degree $N$ between
$n$-dimensional smooth varieties and assume that $\gamma$ is totally branched
over a smooth divisor $D \subset X$. In this setting, if $\sigma \in
\Gamma\bigl( X,\, \Omega^p_X(*D) \bigr)$ is a $p$-form, possibly with poles of
arbitrary order\footnote{See Definition~\ref{defn:arbitraryPoles} on
  page~\pageref{defn:arbitraryPoles} for a proper definition of the sheaf
  $\Omega^p_X(*D)$ of differential forms with poles of arbitrary order along
  $D$.} along $D$, its pull-back $\gamma^*(\sigma)$ is again a $p$-form, now
with poles along $D_\gamma := \supp \gamma^*(D)$. It is an elementary fact
that to check whether $\sigma$ does indeed have poles, it suffices to look at
its pull-back $\gamma^*(\sigma)$. More precisely, it is true that $\sigma$ has
poles of positive order if and only if $\gamma^*(\sigma)$ does. A similar
statement holds for forms with logarithmic poles along $D$. This is, however,
no longer true if we look at symmetric products of $\Omega^p_X$.

For an example that will be important later, choose local coordinates $z_1,
\ldots, z_n$ on $X$ such that $D = \{z_1 =0\}$. The symmetric form
\begin{equation}\label{eq:sfs}
  \sigma := \frac{1}{z^a_1} (dz_1)^{\otimes b_1} \otimes (dz_2)^{\otimes
    b_2} \otimes \cdots \otimes (dz_n)^{\otimes b_n} \in \Gamma\bigl(
  X,\, \Sym^{b_1+\cdots+b_n}\Omega^1_X(*D) \bigr)
\end{equation}
has a pole of order $a$ along $D$. However, an elementary computation shows
that $\gamma^*(\sigma)$ does not have any pole if the pole order of $\sigma$
is sufficiently small with respect to $b_1$, that is $a \leq b_1 \cdot
\frac{N-1}{N}$.

In our proof of Theorem~\ref{thm:main}, we consider morphisms $\gamma: Y \to
X$, where $X$ is a suitable subvariety of the coarse moduli space and $Y$ is a
chart for the moduli stack, or simply has a morphism to the moduli
stack. Tensor products of $\Omega^p_Y$ and $\Omega^p_X$ and the pull-back map
appear naturally in this context when one discusses positivity and the
Kodaira-Iitaka dimension of invertible subsheaves of $\Omega^p_Y$, and tries
to relate that to objects living on the coarse moduli space.  The formal
set-up for this discussion has been given by Campana in his theory of
\emph{Orbifoldes G\'eom\'etriques}.  Since the word \emph{orbifold} is already
used in a different context, and since the notion of a \emph{geometric
  orbifold} is not widely accepted, we have chosen to use the words
\emph{$\cC$-pair}, \emph{$\cC$-form} and \emph{$\cC$-differential} in this
paper, where ``$\cC$'' stands for Campana. In this language, we will say that
the form $\sigma$ defined in~\eqref{eq:sfs} is a $\cC$-form on the $\cC$-pair
$(X, \frac{N-1}{N} \cdot D)$ if and only if $a \leq b_1 \cdot \frac{N-1}{N}$
holds.

\begin{notation}
  We will often need to consider numbers $\frac{N-1}{N}$, where $N$ is either
  a positive integer or $N = \infty$. Throughout the paper we follow the
  convention that $\frac{\infty-1}{\infty} := 1$.
\end{notation}

\begin{defn}[\protect{$\cC$-pair and $\cC$-multiplicities, cf.~\cite[Def.~2.1]{Cam07}}]\label{def:orbifold}
  A \emph{$\cC$-pair} is a pair $(X, D)$ where $X$ is a normal variety or
  complex space and $D$ is a $\Q$-divisor of the form
  $$
  D = \sum_i {\textstyle \frac{n_i-1}{n_i}} \cdot D_i
  $$
  where the $D_i$ are irreducible and reduced distinct Weil divisors on $X$
  and $n_i \in \bN^+ \cup \{\infty\}$. The numbers $n_i$ are called
  \emph{$\cC$-multiplicities} of the components $D_i$, denoted
  $m_{(X,D)}(D_i)$. More generally, if $E \subset X$ is any irreducible,
  reduced Weil divisor, set
  $$
  m_{(X,D)}(E) := \left\{ 
    \begin{matrix}
      n_i & \text{if $\exists i$ such that $E = D_i$ } \\
      1 & \text{otherwise}
    \end{matrix}
  \right.
  $$
\end{defn}

\subsection{Adapted coordinates}

In Section~\ref{sec:differentials}, we compute sheaves of $\cC$-differentials
in local coordinates.  For this, we consider ``adapted'' systems of
coordinates, defined as follows.

\begin{defn}[Adapted coordinates]\label{def:adaptedCoordinates}
  Let $(X, D)$ be a $\cC$-pair, and let $x \in \supp(D)$ be a point which is
  smooth both in $X$ and in $\supp(D)$. If $U$ is a neighborhood of $x$, open
  in the analytic topology, and if $z_1, \ldots, z_n \in \sO_{\hol}(U)$ are
  local analytic coordinates about $x$, we say that the $z_i$ form an
  \emph{adapted system of coordinates} if the set-theoretic equation
  $$
  \supp(D) \cap U = \{z_1 = 0\}
  $$
  holds.
\end{defn}

\begin{rem}\label{rem:wherearecoords}
  If $(X, D)$ is a $\cC$-pair, and $x \in \supp(D)$ is a point which is smooth
  both in $X$ and in $\supp(D)$, then there always exists an open neighborhood
  of $x$ with an adapted system of coordinates. The set of points for which
  there is no system of adapted coordinates is therefore contained in a closed
  subset of codimension $\geq 2$.
\end{rem}

The last remark shows that the set of points for which there is no system of
adapted coordinates will not play any role when we use adapted coordinates in
the discussion of \emph{reflexive} sheaves of differentials. For a more
general setup on smooth spaces, see \cite[Sect.~2.5]{Cam07}.

\subsection{Adapted morphisms}

In Section~\ref{sec:cPintro}, we attached a $\mathcal C$-pair to the base of a
finite morphism. Conversely, in the discussion of a given $\cC$-pair $(X, D)$,
we will often use morphisms $Y \to X$ which induce the $\mathcal C$-pair
structure on $X$, at least to some extent. In this section, we introduce the
necessary notation and prove the existence of these ``adapted'' morphisms.

\begin{notation}[Multiplicity of a Weil divisor in a pull-back divisor]\label{not:pbm}
  Let $\gamma: Y \to X$ be a surjective morphism of normal varieties of
  constant fiber dimension. If $D$ is any divisor on $X$, its restriction
  $D|_{X_{\reg}}$ to the smooth locus of $X$ is Cartier.  In particular, there
  exists a pull-back $\gamma^*(D|_{X_{\reg}})$, which we can interpret as a
  Weil divisor on the normal space $\gamma^{-1}(X_{\reg})$. If $E \subset Y$
  is any irreducible divisor, then $E$ necessarily intersects
  $\gamma^{-1}(X_{\reg})$, and it makes sense to consider the coefficient $m$
  of the pull-back divisor $\gamma^*(D|_{X_{\reg}})$ along
  $E|_{\gamma^{-1}(X_{\reg})}$. Abusing notation, we say that \emph{$E$
    appears in $\gamma^*(D)$ with multiplicity $m$}.
\end{notation}

\begin{convention}[Pull-back of Weil divisors]
  In the setup of Notation~\ref{not:pbm}, the pull-back morphism for Cartier
  divisors defined on $X_{\reg}$ extends to a well-defined pull-back morphism
  $$
  \gamma^* : \{ \text{Weil divisors on }X\} \to \{ \text{Weil divisors
    on }Y\}
  $$
  that respects linear equivalence. Throughout this article, whenever a
  surjective morphism of constant fiber dimension is given, we will use the
  pull-back morphism for Weil divisors and their linear equivalence classes
  without extra mention.
\end{convention}

\begin{defn}[Adapted morphism]\label{def:adaptedmorphism}
  Let $(X, D)$ be a $\cC$-pair, with $D = \sum_i \frac{n_i-1}{n_i}D_i$. A
  surjective morphism $\gamma: Y \to X$ from an irreducible and normal space
  is called \emph{adapted} if the following holds:
  \begin{enumerate}
  \item\ilabel{il:adpt} for any number $i$ with $n_i < \infty$ and any
    irreducible divisor $E \subset Y$ that surjects onto $D_i$, the divisor
    $E$ appears in $\gamma^*(D_i)$ with multiplicity precisely $n_i$.
  \item the fiber dimension is constant on $X$.
  \end{enumerate}
  The morphism $\gamma$ is called \emph{subadapted} if in \iref{il:adpt} we
  require only that $E$ appears in $\gamma^*(D_i)$ with multiplicity at least
  $n_i$.
\end{defn}

The preimage of the logarithmic part of $D$ will appear again and again when
we use adapted covers to discuss the differentials associated with a $\mathcal
C$-pair. We will thus introduce a specific notation for this divisor.

\begin{notation}[Adapted logarithmic divisor]\label{not:dgamma}
  Given a $\cC$-pair $(X, D)$ and an adapted or subadapted morphism $\gamma: Y
  \to X$ as in Definition~\ref{def:adaptedmorphism}, we set
  $$
  D_{\gamma} := \supp \gamma^*(\lfloor D \rfloor) \subset Y.
  $$
  We call $D_\gamma$ the \emph{adapted logarithmic divisor} associated with
  $\gamma$.
\end{notation}

Given a $\cC$-pair $(X,D)$ as in Definition~\ref{def:orbifold} and general
hyperplane $H$, we construct an adapted morphism $\ga: Y \to X$ which is also
a finite cyclic cover totally branched over $H$. The proof is fairly standard
and is included only for completeness.

\begin{prop}[Existence of an adapted morphism]\label{prop:existencesofcovers}
  Let $(X, D)$ be a $\cC$-pair as in Definition~\ref{def:orbifold}. If $X$ is
  projective and if the components $D_i \subseteq D$ are Cartier, then there
  exists a very ample line bundle $L \in \Pic(X)$ such that for general $H \in
  |L|$, there exists a finite cover $\gamma: Y \to X$ with the following
  properties.
  \begin{enumerate}
  \item The domain $Y$ is normal.
  \item The morphism $\gamma$ is adapted in the sense of
    Definition~\ref{def:adaptedmorphism}. It is cyclic, in particular Galois.
  \item The branch locus of $\gamma$ is the union of $H$ and those components
    $D_i$ with $\cC$-multiplicities $n_i \ne \infty$.
  \item The morphism $\gamma$ is totally branched over $H$.
  \item If $D_i \subseteq \lfloor D \rfloor$ is any component, then $\gamma$
    in unbranched over the general point of $D_i$.
  \end{enumerate}
\end{prop}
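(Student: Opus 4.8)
The plan is to realize $\gamma$ as (the normalization of) a Kawamata-style cyclic cover, engineered so that the prescribed ramification index over each $D_i$ equals the $\cC$-multiplicity $n_i$. First I would fix a positive integer $N$ divisible by every finite $\cC$-multiplicity $n_i$, and set
$$
\sL := \bigotimes_{i\,:\,n_i < \infty} \sO_X(D_i)^{\otimes N/n_i}.
$$
Choosing a sufficiently positive very ample line bundle $A$ on the projective variety $X$, the line bundle $L := A^{\otimes N} \otimes \sL^{-1}$ is again very ample; this is the bundle whose existence the proposition asserts. For general $H \in |L|$ I would form the divisor
$$
B := H + \sum_{i\,:\,n_i<\infty} \tfrac{N}{n_i}\, D_i,
$$
and observe that $\sO_X(B) = \sO_X(H)\otimes\sL = A^{\otimes N}$ is an $N$-th power by construction. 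A section of $A^{\otimes N}$ with zero divisor $B$—obtained as the product of a defining section of $H$ with the fixed sections cutting out the Cartier divisors $D_i$, which is where the hypothesis that the $D_i$ be Cartier enters—then defines a cyclic cover $\widetilde\gamma : \widetilde Y \to X$ of degree $N$, and I would let $\gamma : Y \to X$ be its normalization. Cyclicity (hence the Galois property with group $\Z/N\Z$) and finiteness are immediate from this description, and finiteness gives constant, namely zero, fiber dimension.

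The verification of the branching properties is local and reduces to the standard ramification computation for the equation $z^N = t^a$. Over a general point of $H$, where $H$ enters $B$ with multiplicity one, the local model $z^N = t$ is already normal and totally ramified, giving property~(4). Over a component $D_i$ with $n_i = \infty$, which does not appear in $B$ at all, the cover is \'etale, giving property~(5). The only point requiring care is the ramification over a component $D_i$ with $n_i < \infty$: here the local model is $z^N = t^{N/n_i}$, which is non-normal, and I would compute that its normalization splits into $N/n_i$ branches, each given by $w^{n_i} = t$ up to a unit and a root of unity. Thus every irreducible divisor $E \subset Y$ dominating $D_i$ meets $\gamma^*(D_i)$ with multiplicity exactly $n_i$, which is precisely the adaptedness condition of Definition~\ref{def:adaptedmorphism}, and simultaneously identifies the branch locus as $H \cup \bigcup_{n_i<\infty} D_i$, giving properties~(2) and~(3). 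Normality of $Y$ holds by construction, since $Y$ is defined as a normalization, giving property~(1).

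The genericity of $H$ is used only to keep the geometry clean: for general $H$ the hyperplane section is smooth, meets each $D_i$ and their pairwise intersections transversally, and avoids the singular loci of $X$ and of $\supp(D)$, so that the local models above are valid at the general point of every boundary component and the preceding ramification analysis applies there. The main obstacle is the normalization step for $z^N = t^{N/n_i}$: one must check that normalizing neither merges the branches nor introduces extra ramification, so that the ramification index drops from the naive value $N$ to exactly $n_i$, rather than to some intermediate divisor of $N$. Everything else is a formal consequence of the cyclic cover formalism together with the very ampleness of $A$.
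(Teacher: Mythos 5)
Your proposal is correct and follows essentially the same route as the paper: both construct the degree-$N$ cyclic cover ($N$ a common multiple of the finite $\cC$-multiplicities) by extracting an $N$-th root of a section of $A^{\otimes N}$ with zero divisor $H + \sum_{n_i < \infty} \frac{N}{n_i}\, D_i$, using very ampleness of $L = A^{\otimes N} - \sum \frac{N}{n_i} D_i$ and the Cartier hypothesis on the $D_i$, and then pass to the normalization. The normalization step you flag as the main obstacle is resolved in the paper exactly as you indicate, via the explicit factorization $y^N - z_1^{N/n_i} = \prod_{k=0}^{N/n_i - 1}\bigl( y^{n_i} - \varepsilon^k z_1 \bigr)$, which splits the local cover over a general point of $D_i$ into $N/n_i$ distinct smooth (hence already normal) branches, each totally branched to order exactly $n_i$.
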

\begin{proof}
  For convenience of notation, we sort the indices $n_i$ so that the first
  $\cC$-multiplicities $n_1, n_2, \ldots, n_k$ are those that are finite. Let
  $N$ be the least common multiple of the $\cC$-multiplicities $(n_i)_{i\leq
    k}$ that are not $\infty$, consider a very ample Cartier divisor $A$ such
  that
  $$
  L := A^{\otimes N}- \sum_{i \leq k} {\textstyle \frac{N}{n_i}} \cdot D_i
  $$
  is still very ample, and consider a general hyperplane $H \in |L|$. Let
  $\sigma \in H^0\bigl(X, A^{\otimes N} \bigr) \setminus \{ 0 \}$ be a
  non-vanishing section associated to the divisor $H + \sum_{i \leq k}
  \frac{N}{n_i} \cdot D_i \in |A^{\otimes N}|$. Abusing notation, let $A$ and
  $A^{\otimes N}$ also denote the total spaces of the associated bundles.
  Consider the multiplication map $m : A \to A^{\otimes N}$, identify the
  section $\sigma$ with a subvariety of the space $A^{\otimes N}$, and let
  $\tilde \sigma \subset A$ be the preimage $\tilde \sigma =
  m^{-1}(\sigma)$. The map $m|_{\tilde \sigma} : \tilde \sigma \to \sigma$ is
  clearly a cyclic cover, with an associated action of $\factor \mathbb
  Z.N\mathbb Z.$, acting via multiplication with $N^{\rm{th}}$ roots of unity.

  The restricted morphism $m|_{\tilde \sigma} : \tilde \sigma \to \sigma$ is
  obviously unbranched away from $H \cup \bigcup_{i \leq k} D_i$. Over the
  general point of $H$, the variety $\tilde \sigma$ is smooth and the morphism
  $m|_{\tilde \sigma}$ is totally branched to order $N$.
  
  Now let $x$ be a general point of one of the $D_i$ with $i \leq k$. Choose
  an open neighborhood of $x$ with a system of adapted coordinates, $z_1,
  \ldots, z_n$, and choose bundle coordinates $y$ and $y'$ on $A$ and
  $A^{\otimes N}$, respectively, such that the multiplication map $m$ is given
  as $y \mapsto y^N = y'$. In these coordinates, we have $D_i = \{z_1=0 \}$,
  and the subvarieties $\sigma$ and $\tilde \sigma$ are given as
  $$
  \sigma = \left\{ y' - z_1^{\frac{N}{n_i}} = 0 \right\} \quad \text{and}
  \quad \tilde \sigma = \left\{ y^N - z_1^{\frac{N}{n_i}} = 0 \right\}.
  $$
  Recalling that
  $$
  y^N - z_1^{\frac{N}{n_i}} = \left( y^{n_i} \right)^{\frac{N}{n_i}} -
  z_1^{\frac{N}{n_i}} = \prod_{k=0}^{\frac{N}{n_i}-1} \left( y^{n_i} -
    \varepsilon^k \cdot z_1 \right)
  $$
  for $\varepsilon = \exp \bigl( \frac{n_i}{N}\cdot \sqrt{-1} \bigr)$, similar
  to \cite[Sect.~III.9]{HBPV}, we obtain that
  $$
  \tilde \sigma = \bigcup_{k=0}^{\frac{N}{n_i}-1} \left\{ y^{n_i} =
    \varepsilon^k \cdot z_1 \right\}
  $$
  is the union of $\frac{N}{n_i}$ distinct smooth components, each totally
  branched to order $n_i$ over $D_i$. Defining $Y$ as the normalization of
  $\tilde \sigma$, we obtain the claim.
\end{proof}

\begin{notation}[Cyclic adapted cover with extra branching]\label{not:adptweb}
  Given a $\cC$-pair $(X, D)$ and a general hyperplane $H$ as in
  Proposition~\ref{prop:existencesofcovers}, we call the associated morphism
  $\gamma$ a \emph{cyclic adapted cover with extra branching along $H$} and
  set $H_{\gamma} := \supp \gamma^*(H)$.
\end{notation}

The standard adjunction formula immediately gives the following useful
relation between the log canonical divisor $K_Y + D_{\gamma}$ and the
pull-back of $K_X + D$.

\begin{lem}\label{lem:adjunction}
  If $\gamma : Y \to X$ is a cyclic adapted cover with extra branching along
  $H$, the following equivalence of Weil divisor classes holds,
  $$
  K_Y + D_{\gamma} = \gamma^*(K_X + D) + (N-1)\cdot H_{\gamma},
  $$
  where $N$ is the degree of the finite morphism $\gamma$.
\end{lem}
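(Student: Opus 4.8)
The plan is to establish the divisor-class identity by comparing the two sides componentwise, working on the smooth locus where everything is Cartier (legitimate since both sides are Weil divisor classes on normal $Y$, and removing a codimension-$\geq 2$ set does not affect such classes). The fundamental input is the standard ramification formula for the cyclic cover $\gamma$ of degree $N$: writing $R$ for the ramification divisor, one has the relative canonical formula
\begin{equation}\label{eq:relcan}
  K_Y = \gamma^*(K_X) + R.
\end{equation}
My task is then to compute $R$ explicitly using the detailed branching behavior recorded in Proposition~\ref{prop:existencesofcovers}, and to match the resulting expression against $\gamma^*(D) + (N-1)H_\gamma - D_\gamma$.

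**First I would** organize the branch divisors into the three families identified in the proposition. Over the general point of $H$, the cover is totally branched to order $N$ (part~(4)), so by Notation~\ref{not:adptweb} the reduced preimage $H_\gamma = \supp\gamma^*(H)$ is irreducible with $\gamma^*(H) = N\cdot H_\gamma$, contributing $(N-1)\cdot H_\gamma$ to $R$. Over the general point of each $D_i$ with finite $\cC$-multiplicity $n_i$ (i.e.\ $i \leq k$), the computation in the proof shows $\gamma$ is branched to order $n_i$; each such preimage component $E$ therefore contributes $(n_i - 1)\cdot E$ to $R$, and since $\gamma^*(D_i) = n_i\sum_E E$ by the adaptedness condition~\iref{il:adpt}, this contribution equals $\frac{n_i-1}{n_i}\gamma^*(D_i)$. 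Over the general point of any $D_i$ with $n_i = \infty$ (a component of $\lfloor D \rfloor$), part~(5) asserts $\gamma$ is unbranched, so these contribute nothing to $R$.

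**Assembling these**, the ramification formula~\eqref{eq:relcan} reads
\begin{equation}\label{eq:assemble}
  K_Y = \gamma^*(K_X) + (N-1)\cdot H_\gamma + \sum_{i\leq k} {\textstyle\frac{n_i-1}{n_i}}\,\gamma^*(D_i).
\end{equation}
Now I add $D_\gamma = \supp\gamma^*(\lfloor D\rfloor)$ to both sides. By convention $\frac{\infty-1}{\infty} = 1$, so the finite-multiplicity part already present in \eqref{eq:assemble} combines with $D_\gamma$ as follows: writing $D = \sum_{i\leq k}\frac{n_i-1}{n_i}D_i + \sum_{i>k} D_i$ (the second sum being exactly $\lfloor D\rfloor$), the pull-back is $\gamma^*(D) = \sum_{i\leq k}\frac{n_i-1}{n_i}\gamma^*(D_i) + \gamma^*(\lfloor D\rfloor)$. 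The subtle point is that $\gamma^*(\lfloor D\rfloor)$ must reduce to $D_\gamma = \supp\gamma^*(\lfloor D\rfloor)$; this holds precisely because part~(5) guarantees $\gamma$ is unbranched over the general point of each $D_i$ with $i>k$, so each such preimage appears with multiplicity one and the pull-back is already reduced. Substituting gives $K_Y + D_\gamma = \gamma^*(K_X) + (N-1)H_\gamma + \gamma^*(D) = \gamma^*(K_X+D) + (N-1)H_\gamma$, as claimed.

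**The main obstacle** I anticipate is bookkeeping rather than conceptual: one must be careful that the reduced structure in the definitions of $H_\gamma$ and $D_\gamma$ matches the scheme-theoretic multiplicities in the ramification formula. The clean matching of the finite-multiplicity terms rests essentially on the adaptedness hypothesis forcing multiplicity \emph{exactly} $n_i$ (so that the factor $\frac{n_i-1}{n_i}$ converts the ramification coefficient $n_i-1$ into a coefficient against the full pull-back $\gamma^*(D_i)$ rather than against the reduced preimage). Everything else is the standard adjunction/Hurwitz computation, and since we work only up to linear equivalence of Weil divisor classes, the codimension-$\geq 2$ locus where $\gamma$ may behave pathologically (and where adapted coordinates may fail, cf.\ Remark~\ref{rem:wherearecoords}) can be safely ignored.
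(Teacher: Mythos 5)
Your proposal is correct and follows essentially the same route as the paper's proof: both combine the standard adjunction (Hurwitz) formula for the finite cover with the multiplicities forced by adaptedness (exactly $n_i$ over the finite-multiplicity components, $N$ over $H$, and multiplicity one over $\lfloor D \rfloor$) to convert the ramification contribution $\sum_j (n_i-1)\, D_{i,j}$ into $\frac{n_i-1}{n_i}\gamma^*(D_i)$ and to identify $\gamma^*(\lfloor D \rfloor)$ with $D_\gamma$. The only difference is organizational: the paper writes the computation componentwise in terms of the divisors $D_{i,j}$, while you phrase it via the ramification divisor $R$ and full pull-backs, which is the same bookkeeping.
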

\begin{proof}
  Again, we sort the indices $n_i$ so that the first $\cC$-multiplicities
  $n_1, n_2, \ldots, n_k$ are those that are finite. By definition of adapted
  cover, the cycle-theoretic preimage $\gamma^*(D_i)$ is a sum of divisors
  $D_{i,j}$ that appear with multiplicity precisely $n_i$ if $i \leq k$, and
  with multiplicity one if $i > k$
  $$
  \gamma^*(D_i) = \left\{ 
    \begin{matrix}
      \sum_j n_i\cdot D_{i,j} & \text{if $i \leq k$} \\
      \sum_j  D_{i,j} & \text{if $i > k$}
    \end{matrix} \right.
  $$
  In particular,
  $$
  \gamma^*(D) = \sum_{i\leq k} \sum_j {\textstyle n_i\frac{n_i-1}{n_i}} \cdot D_{i,j} +
  \sum_{i>k}\sum_j D_{i,j} = \sum_{i\leq k} \sum_j (n_i-1)\cdot D_{i,j} +
  D_{\gamma}.
  $$
  Together with the standard adjunction formula for a finite morphism,
  $$
  K_Y = \gamma^*(K_X) + \sum_{i \leq k}\sum_j (n_i-1)\cdot D_{i,j} +
  (N-1)\cdot H_{\gamma},
  $$
  this gives the claim.
\end{proof}

\subsection{Adapted differentials}

If $\gamma : Y \to X$ is a cyclic adapted cover with extra branching along $H$
and if $X$ and $Y$ are smooth, it will be useful later to slightly enlarge the
sheaf $\gamma^* \Omega^1_X(\log \lfloor D \rfloor)$ and consider a sheaf
$\Omega^1_Y(\log D_{\gamma})_{\rm adpt}$ of ``adapted differentials'' with
$$
\det \Omega^1_Y(\log D_{\gamma})_{\rm adpt} \cong \sO_Y \bigl(\gamma^*(K_X +
D) \bigr).
$$
If $X$ and $Y$ are singular, we do a similar construction, using the reflexive
hull of $\gamma^* \Omega^1_X(\log \lfloor D \rfloor)$. The following notation
is useful in this context and is used throughout the present paper.

\begin{notation}[Reflexive sheaves and operations]
  Let $Z$ be a normal variety and $\sA$ a coherent sheaf of
  $\O_Z$-modules. For $n\in \bN$, set $\sA^{[n]} := \otimes^{[n]} \sA :=
  (\sA^{\otimes n})^{**}$, $\Sym^{[n]} \sA := (\Sym^n \sA)^{**}$,
  etc. Likewise, for a morphism $\gamma : X \to Z$ of normal varieties, set
  $\gamma^{[*]}\sA := (\gamma^*\sA)^{**}$. If $\sA$ is reflexive of rank one,
  we say that $\sA$ is $\Q$-Cartier if there exists a number $n\in\bN$ such
  that $\sA^{[n]}$ is invertible.
\end{notation}

Adapted differentials are now defined as follows.

\begin{defn}[Adapted differentials]\label{def:adptdiffs}
  If $\gamma : Y \to X$ is a cyclic adapted cover with extra branching along
  $H$ and $1 \leq p \leq \dim X$, we define a sheaves
  $$
  \Omega^{[p]}_{Y}(\log D_{\gamma})_{\rm adpt} \subseteq \Omega^{[p]}_{Y}(\log
  D_{\gamma}),
  $$
  called \emph{sheaves of adapted differentials associated with the adapted
    cover $\gamma$}, on the level of presheaves as follows. If $U \subseteq Y$
  is any open set and $\sigma \in \Gamma \bigl( U,\, \Omega^{[p]}_{Y}(\log
  D_{\gamma}) \bigr)$ any section, then $\sigma$ is in $\Gamma \bigl( U,\,
  \Omega^{[p]}_{Y}(\log D_{\gamma})_{\rm adpt} \bigr)$ if and only if the
  restriction of $\sigma$ to the open set $V := U \setminus \gamma^{-1}(\lceil
  D \rceil)$ satisfies $\sigma|_V \in \GA \bigl( V, \,
  \gamma^{[*]}\Omega^{[p]}_X \bigr)$.
\end{defn}

We end this section by noting a few properties of the sheaf of adapted
differentials for later use.

\begin{rem}[Reflexivity, inclusions of adapted differentials]
  It is immediate from the definition that the sheaf $\Omega^{[p]}_{Y}(\log
  D_{\gamma})_{\rm adpt}$ of adapted differentials is reflexive. Since
  $\gamma^{[*]} \bigl( \Omega^{[p]}_X(\log \lfloor D \rfloor) \bigr) \subseteq
  \Omega^{[p]}_Y(\log D_{\gamma})$, it is also clear that there exist
  inclusions
  $$
  \gamma^{[*]} \bigl( \Omega^{[p]}_{X}(\log \lfloor D \rfloor) \bigr)
  \subseteq \Omega^{[p]}_{Y}(\log D_{\gamma})_{\rm adpt} \subseteq
  \Omega^{[p]}_{Y}(\log D_{\gamma}).
  $$
\end{rem}

\begin{rem}[Determinant of adapted differentials]\label{rem:c1ofadptdiffs}
  There exist isomorphisms of sheaves
  \begin{align*}
    \det \left( \Omega^{[1]}_{Y}(\log D_{\gamma})_{\rm adpt} \right) & \cong \sO_Y
    \bigl(K_Y + D_{\gamma} - (N-1)\cdot H_{\gamma}\bigr) && \text{by Construction}\\
    & \cong \sO_Y \bigl(\gamma^*(K_X + D) \bigr). && \text{by Lemma~\ref{lem:adjunction}}
  \end{align*}
\end{rem}

\begin{rem}[Normal bundle sequence for adapted differentials]\label{rem:molitor}
  Let $F \subset X$ be a smooth curve. Assume that the pair $(X, \lceil D
  \rceil \cup H)$ is snc along $F$, and that $F$ intersects the support
  $\supp(D+ H)$ transversely. The preimage $\tilde F:= \gamma^{-1} (F) \subset
  Y$ is then smooth, intersects $D_\gamma \cup H_\gamma$ transversely, and the
  standard conormal sequence of logarithmic differentials,
  $$
  0 \to N^*_{\tilde F/Y} \to \Omega^1_Y(\log D_\gamma)|_{\tilde F} \to
  \Omega^1_{\tilde F}(\log D_\gamma|_{\tilde F}) \to 0,
  $$
  restricts to an exact sequence
  $$
  0 \to \underbrace{N^*_{\tilde F/Y}}_{\makebox[0mm][l]{\scriptsize $\cong
      \gamma^*(N^*_{F/X})$}} \to \Omega^1_Y(\log D_\gamma)_{\rm adpt}|_{\tilde
    F} \to \underbrace{\Omega^1_{\tilde F}(\log D_\gamma|_{\tilde F})\otimes
    \sO_{\tilde F}\bigl(-(N-1)H_\gamma|_{\tilde F}\bigr)}_{\cong
    \gamma^*(\Omega^1_F)\otimes \sO_{\tilde F}(\gamma^*D|_F)} \to 0.
  $$
\end{rem}

\section{$\cC$-differentials}
\label{sec:differentials}

Given a $\cC$-pair $(X,D)$ and numbers $p$ and $d$, we next define the sheaf
of $\cC$-differentials, written as $\Sym_{\cC}^{[d]} \Omega^p_X(\log D)$. A
section $\sigma \in \Gamma \bigl( X,\, \Sym_{\cC}^{[d]} \Omega^p_X(\log D)
\bigr)$ is a symmetric form on $X$, possibly with logarithmic poles along the
support of $D$, which satisfies extra conditions.  There are two essentially
equivalent ways to specify what these conditions are.

\begin{enumerate}
\item\ilabel{il:Campana} The pole order of $\sigma$ along a component of $D$
  is small compared to the multiplicity of the component in $D$, and to the
  pole order of forms $f \cdot \sigma \in \Gamma \bigl( X,\, \Sym_{\cC}^{[d]}
  \Omega^p_X(\log D) \bigr)$, where $f$ is a rational or meromorphic function.
\item\ilabel{il:us} The pull-back of $\sigma$ to any adapted covering $\gamma$
  has only logarithmic poles along $D_\gamma$, and no other poles elsewhere.
\end{enumerate}

The sheaf of $\cC$-differentials has been defined in \cite{Cam07} writing down
Condition~\iref{il:Campana} in adapted coordinates on smooth spaces. For our
purposes, however, Condition~\iref{il:us} is more convenient. The relation
between the definitions is perhaps most clearly seen when the
$\cC$-differentials are computed explicitly in local coordinates. This is done
in Computation~\ref{comp:obri} below.

\subsection{Useful results of sheaf theory}

Before defining the sheaf of $\cC$-differentials in
Definition~\ref{def:orbifoldDiff} below, we recall a few facts and definitions
concerning saturated and reflexive sheaves.

\begin{defn}[Saturation of a subsheaf]\label{def:saturation}
  Let $X$ be a normal variety, $\sB$ a coherent, reflexive sheaf of
  $\sO_X$-modules and $\sA$ a subsheaf, with inclusion $\iota: \sA \to \sB$.
  The saturation of $\sA$ in $\sB$ is the kernel of the natural map
  $$
  \sB \to \factor \coker(\iota).\tor..
  $$
  If the ambient sheaf $\sB$ is understood from the context, the saturation of
  $\sA$ is often denoted as $\overline{\sA}$. If $\coker(\iota)$ is torsion
  free, we say that $\sA$ is saturated in $\sB$.
\end{defn}

\begin{prop}[\protect{Reflexivity of the saturation, cf.~\cite[Claim on p.~158]{OSS}}]\label{prop:satisrefl}
  In the setup of Definition~\ref{def:saturation}, the saturation
  $\overline{\sA}$ is reflexive. \qed
\end{prop}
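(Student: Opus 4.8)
The plan is to realize $\overline{\sA}$ as the kernel of a morphism from the reflexive sheaf $\sB$ onto a torsion-free sheaf, and then to apply the standard characterization of reflexive sheaves on a normal variety. By the very definition of the saturation, $\overline{\sA}$ sits in a left-exact sequence
$$
0 \to \overline{\sA} \to \sB \to \sQ, \qquad \sQ := \factor \coker(\iota).\tor.,
$$
and the quotient $\sQ$ is torsion-free by construction. The characterization I would invoke (see, e.g., \cite{OSS}, and the proposition statement already points there) is that on a normal variety a coherent sheaf $\sF$ is reflexive if and only if it is torsion-free and \emph{normal}, in the sense that for every open $U \subseteq X$ and every closed $Z \subseteq U$ with $\codim_U Z \geq 2$ the restriction $\sF(U) \to \sF(U \setminus Z)$ is bijective. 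Thus the whole proof reduces to checking these two properties for $\overline{\sA}$.

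Torsion-freeness is immediate: $\overline{\sA}$ is a subsheaf of $\sB$, which is reflexive and hence torsion-free. For normality I would fix a pair $Z \subseteq U$ as above and compare the sequence defining $\overline{\sA}$ on $U$ and on $U \setminus Z$. The middle term is controlled by the reflexivity, hence normality, of $\sB$, so that $\sB(U) \to \sB(U \setminus Z)$ is an isomorphism; the right-hand term is controlled by torsion-freeness of $\sQ$, which forces $\sQ(U) \to \sQ(U\setminus Z)$ to be injective because $U \setminus Z$ is dense. A short diagram chase then yields bijectivity of $\overline{\sA}(U) \to \overline{\sA}(U\setminus Z)$: injectivity is inherited from $\sB$, and for surjectivity one takes a section $s \in \overline{\sA}(U\setminus Z) \subseteq \sB(U\setminus Z)$, extends it uniquely to $\tilde s \in \sB(U)$ by normality of $\sB$, and observes that the image of $\tilde s$ in $\sQ(U)$ restricts to $0$ on the dense open $U \setminus Z$ and therefore vanishes; hence $\tilde s \in \overline{\sA}(U)$ restricts to $s$.

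I expect no serious obstacle here, since the content is entirely the correct choice of characterization together with the torsion-free hypothesis on $\sQ$, which is exactly what the definition of saturation supplies. The only point deserving care is that the surjection $\sB \onto \sQ$ genuinely identifies $\sQ$ with the quotient $\sB/\overline{\sA}$; this holds because $\sB \onto \coker(\iota)$ and $\coker(\iota) \onto \sQ$ are both surjective, so the torsion-freeness that I used for $\sQ$ is the torsion-freeness of the actual cokernel of $\overline{\sA} \into \sB$. Alternatively, when $X$ is quasi-projective I would embed $\sB$ into a locally free sheaf $\sE$ with torsion-free cokernel $\sE/\sB$ and note that $\sE/\overline{\sA}$, being an extension of the torsion-free sheaf $\sQ$ by the torsion-free sheaf $\sE/\sB$, is again torsion-free; the characterization of reflexive sheaves as kernels $\overline{\sA} = \ker(\sE \to \sE/\overline{\sA})$ with $\sE$ locally free and quotient torsion-free then gives reflexivity directly, which is the form in which the cited claim in \cite{OSS} is stated.
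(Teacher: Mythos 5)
Your proof is correct and is essentially the paper's own argument: the paper offers no proof, citing instead the claim in \cite{OSS} that the kernel of a map from a reflexive sheaf onto a torsion-free sheaf is reflexive, and your verification of this via the torsion-free-plus-normal characterization (which the paper itself invokes elsewhere, cf.\ its remark citing \cite[Def.~1.1.11 on p.~150]{OSS}) is exactly the standard proof of that cited claim, including the key point that $\sB \to \sQ$ is surjective so $\sQ$ really is $\sB/\overline{\sA}$. The only slip is terminological: your sequence $0 \to \sQ \to \sE/\overline{\sA} \to \sE/\sB \to 0$ exhibits $\sE/\overline{\sA}$ as an extension of $\sE/\sB$ by $\sQ$, not of $\sQ$ by $\sE/\sB$, but the torsion-freeness conclusion is unaffected.
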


The next proposition shows that the reflexive symmetric product of a saturated
sheaf remains saturated.

\begin{prop}[Saturation and symmetric products]\label{prop:satandsym}
  Let $X$ be a normal variety, $\sB$ a coherent, reflexive sheaf of
  $\sO_X$-modules and $\sA$ a saturated subsheaf, with inclusion $\iota: \sA
  \to \sB$. If $m$ is any number, then the natural inclusion of reflexive
  symmetric products,
  $$
  \Sym^{[m]}\iota : \Sym^{[m]} \sA \to \Sym^{[m]} \sB
  $$
  represents $\Sym^{[m]} \sA$ as a saturated subsheaf of $\Sym^{[m]} \sB$.
\end{prop}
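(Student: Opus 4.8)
The plan is to exploit the fact that both $\Sym^{[m]}\sA$ and its saturation inside $\Sym^{[m]}\sB$ are reflexive, so that it suffices to prove the statement on the complement of a set of codimension $\geq 2$, where everything is locally free and the assertion reduces to linear algebra. First I would isolate a suitable open set $U \subseteq X$. Since $X$ is normal, $X_{\sing}$ has codimension $\geq 2$. Since $\sA$ and $\sB$ are reflexive they are torsion-free, and since $\sA$ is saturated in $\sB$ the quotient $\sB/\sA$ is torsion-free by Definition~\ref{def:saturation}. Over the regular locus a torsion-free coherent sheaf is locally free away from a set of codimension $\geq 2$; applying this to $\sA$, $\sB$ and $\sB/\sA$, and removing $X_{\sing}$ as well, I obtain an open set $U$ with $\codim_X(X\setminus U) \geq 2$ on which $X$ is smooth and $\sA|_U$, $\sB|_U$, $(\sB/\sA)|_U$ are all locally free. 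In particular, $\sA|_U \subseteq \sB|_U$ is a subbundle inclusion, the quotient being a vector bundle.

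Next I would treat the symmetric powers on $U$, which is pure linear algebra. Working in a local frame of $\sB|_U$ adapted to the subbundle $\sA|_U$, the symmetric power $\Sym^m(\sA|_U)$ is spanned by exactly those degree-$m$ monomials that involve only the sub-frame, and these form part of the monomial frame of $\Sym^m(\sB|_U)$. Hence $\Sym^m(\sA|_U) \subseteq \Sym^m(\sB|_U)$ is again a subbundle with locally free — in particular torsion-free — quotient, so $\Sym^m(\sA|_U)$ is saturated in $\Sym^m(\sB|_U)$. Because reflexivization does not alter a sheaf where it is already locally free, one has $\Sym^{[m]}\sA|_U = \Sym^m(\sA|_U)$ and $\Sym^{[m]}\sB|_U = \Sym^m(\sB|_U)$; thus $\Sym^{[m]}\sA$ is saturated in $\Sym^{[m]}\sB$ over $U$.

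Finally I would globalize. Let $\overline{\Sym^{[m]}\sA}$ denote the saturation of $\Sym^{[m]}\sA$ in $\Sym^{[m]}\sB$; it is reflexive by Proposition~\ref{prop:satisrefl}, while $\Sym^{[m]}\sA$ is reflexive by construction. Saturation commutes with restriction to open sets, so the previous step yields $\overline{\Sym^{[m]}\sA}|_U = \Sym^{[m]}\sA|_U$. A reflexive sheaf on a normal variety equals the pushforward of its restriction along any open immersion $j : U \hookrightarrow X$ whose complement has codimension $\geq 2$; applying this to both reflexive sheaves, the agreement on $U$ upgrades to the global equality $\Sym^{[m]}\sA = \overline{\Sym^{[m]}\sA}$, which is the claim. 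The step I expect to require the most care is precisely this globalization: it would not be enough to verify saturation only at codimension-one points, since a priori the quotient $\Sym^{[m]}\sB / \Sym^{[m]}\sA$ could carry torsion supported in codimension $\geq 2$. The reflexivity of both $\Sym^{[m]}\sA$ and its saturation is exactly what excludes this, forcing two sheaves that coincide in codimension one to coincide everywhere, so I would invoke this Hartogs-type extension property cleanly rather than argue pointwise about torsion.
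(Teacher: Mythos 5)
Your proposal is correct and takes essentially the same route as the paper: restrict to a big open set with complement of codimension $\geq 2$ on which $\sA$, $\sB$ and the quotient $\sB/\sA$ are locally free, verify that the cokernel of $\Sym^{[m]}\iota$ is torsion-free there, and conclude by the reflexivity of both $\Sym^{[m]}\sA$ and its saturation (Proposition~\ref{prop:satisrefl}), which forces agreement in codimension one to propagate everywhere. The only cosmetic difference is that you establish the torsion-freeness on the big open set by an explicit adapted-frame computation, where the paper instead cites the standard filtration of symmetric powers from \cite[II,~Ex.~5.16]{Ha77}.
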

\begin{proof}
  There exists a closed subset $Z \subset X$ of $\codim_X Z \geq 2$ such that
  $\sA$, $\sB$ and $\coker(\iota)$ are locally free on $X^\circ := X \setminus
  Z$. It follows from standard sequences \cite[II,~Ex.~5.16]{Ha77} that the
  cokernel of $\Sym^{[m]}\iota$ is torsion-free on $X^\circ$. In particular,
  the natural inclusion
  \begin{equation}\label{eq:mbr}
    \Sym^{[m]}\sA \to \overline{\Sym^{[m]}\sA}    
  \end{equation}
  is isomorphic away from $Z$. By definition and by
  Proposition~\ref{prop:satisrefl}, respectively, both sides of~\eqref{eq:mbr}
  are reflexive. The inclusion~\eqref{eq:mbr} must thus be isomorphic.
\end{proof}

\begin{defn}[Sheaf of sections with arbitrary pole order]\label{defn:arbitraryPoles}
  Let $X$ be a variety, let $D \subset X$ be a reduced Weil divisor and $\sF$
  a reflexive coherent sheaf of $\sO_X$-modules. We will often consider
  sections of $\sF$ with poles of arbitrary order along $D$, and let $\sF(*D)$
  be the associated sheaf of these sections.  More precisely, we define
  $$
  \sF(*D) := \lim_{\overset{\longrightarrow}{m}} \bigl( \sF \otimes
  \sO_X(m\cdot D) \big)^{**}.
  $$
\end{defn}

\subsection{The definition of $\pmb \cC$-differentials}

We next define a $\cC$-differential.  Our approach is slightly different than
Campana's approach in \cite{Cam07}, as Campana defines $\cC$-differentials in
local adapted coordinates.  However, we will recover his definition in
Section~\ref{subsec:localcoord}.

\begin{defn}[\protect{$\cC$-differentials, cf.~\cite[Sect.~2.6-7]{Cam07}}]\label{def:orbifoldDiff}
  If $(X,D)$ is a $\cC$-pair we define a sheaf
  $$
  \underbrace{ \Sym_{\cC}^{[d]} \Omega^p_X(\log D)}_{=:\sA } \subseteq
  \underbrace{\bigl(\Sym^{[d]} \Omega^p_X\bigr) (* \lceil D \rceil)}_{=: \sB}
  $$
  on the level of presheaves as follows: if $U \subseteq X$ is open and
  $\sigma \in \Gamma \bigl( U,\, \sB \bigr)$ any form, possibly with poles
  along $D$, then $\sigma$ is a section of $\sA$ if and only if for any open
  subset $U' \subseteq U$ and any adapted morphism $\gamma: V \to U'$, the
  reflexive pull-back has at most logarithmic poles along $D_\gamma$, and no
  other poles elsewhere, i.e.
  \begin{equation}\label{eq:orbifoldDiff}
    \gamma^{[*]}(\sigma) \in \Gamma \bigl( V,\, \Sym^{[d]} 
    \Omega^p_V(\log D_{\gamma} )\bigr).
  \end{equation}
\end{defn}

\begin{explanation}
  Inclusion~\eqref{eq:orbifoldDiff} of Definition~\ref{def:orbifoldDiff} can
  also be expressed as follows. If $E \subset V$ is any irreducible Weil
  divisor which dominates a component of $\lfloor D \rfloor$, then
  $\gamma^*(\sigma)$ may have at most logarithmic poles along $E$.  If $E$
  does not dominate a component of $\lfloor D \rfloor$, then
  $\gamma^*(\sigma)$ may not have any poles along $E$.
\end{explanation}

\begin{rem}
  Definition~\ref{def:orbifoldDiff} remains invariant if we remove arbitrary
  small sets from $U'$.  It is therefore immediate that the sheaf
  $\Sym_{\cC}^{[d]} \Omega^p_X(\log D)$ is torsion free and normal as a sheaf
  of $\sO_X$-modules, cf.~\cite[Def.~1.1.11 on p. 150]{OSS}. Once we have seen in
  Corollary~\ref{cor:reflexivity} that $\Sym_{\cC}^{[d]} \Omega^p_X(\log D)$
  is also coherent, this will imply that it is in fact reflexive.
\end{rem}

\subsection{$\pmb \cC$-differentials in local coordinates}
\label{subsec:localcoord}

It is sometimes useful to represent $\cC$-differentials explicitly in local
coordinates. The following computations yields several results which will be
needed later on.

\begin{computation}\label{comp:obri}
  Let $(X, D)$ be a $\cC$-pair as in Definition~\ref{def:orbifold}.  Let $D_i
  \subseteq D$ be a component, let $x \in D_i$ be a smooth point, and let $U \subseteq X$ be an open
  neighborhood of $x$ with an adapted system of coordinates as in
  Definition~\ref{def:adaptedCoordinates}. Finally, consider a section
  $$
  \sigma := \frac{f(z_1, \ldots, z_n)}{z_1^a}\cdot (dz_1)^{m_1}\cdot
  (dz_2)^{m_2} \cdots (dz_n)^{m_n} \in \Gamma \left( U,\, \Sym^{[d]}
    \Omega^1_X \bigl(* \lceil D \rceil \bigr)\right),
  $$
  where $d = \sum m_i$ and $f \in \sO_U$ is a holomorphic function that does
  not vanish along $D_i \cap U = \{ z_1 = 0\}$. We aim to express
  Condition~\eqref{eq:orbifoldDiff} in this context. To this end, after
  possibly replacing $U$ by one of its open subsets, let $\gamma: V \to U$ be
  any adapted morphism, and $E \subset V$ any divisor that dominates $D_i \cap
  U$.

  If $D_i$ appears in $D$ with $\cC$-multiplicity $n_i = \infty$, it is a
  standard fact that $\gamma^{[*]}(\sigma)$ has logarithmic poles along $E$ if
  and only if $\sigma$ has logarithmic poles along $D_i$, see
  e.g.~\cite[Cor.~2.12.1]{GKK08}. Condition~\eqref{eq:orbifoldDiff} therefore
  says that $\sigma$ is a section of $\Sym^{[d]}_{\cC} \Omega^1_X(\log D)$ if
  and only if $a \leq m_1$.

  If $D_i$ appears in $D$ with $\cC$-multiplicity $n_i < \infty$, then $E$
  appears in $\gamma^*(D_i)$ with multiplicity $n_i$. The reflexive pull-back
  $\gamma^{[*]}(\sigma)$ is thus a rational section of the sheaf $\Sym^{[d]}
  \Omega^1_V(\log D_{\gamma})$ whose pole order along $E$ is precisely
  \begin{equation}\label{eq:POs}
    P(\sigma, D_i) := n_i \cdot a - (n_i-1)\cdot m_1.
  \end{equation}
  We obtain from Condition~\eqref{eq:orbifoldDiff} that $\sigma$ is a section
  of $\Sym^{[d]}_{\cC} \Omega^1_X(\log D)$ if and only if $P(\sigma, D_i) \leq
  0$.
\end{computation}

\begin{computation}\label{comp:obri2}
  In the setup of Computation~\ref{comp:obri}, if $\tau$ is an arbitrary
  section of $\bigl(\Sym^{[d]} \Omega^1_X\bigr) (* \lceil D \rceil)$, write
  $\tau$ locally as
  $$
  \tau := \sum_{m_1\ldots m_n} \underbrace{\frac{f_{m_1\ldots m_n}(z_1,
      \ldots, z_n)}{z_1^{a_{m_1\ldots m_n}}}\cdot (dz_1)^{m_1} \cdot
    (dz_2)^{m_2} \cdots (dz_n)^{m_n}}_{=:\, \sigma_{m_1\ldots m_n}},
  $$
  where the functions $f_{m_1\ldots m_n}$ are either constantly zero, or do
  not vanish along $D_i \cap U$. Again, we aim to formulate
  Condition~\eqref{eq:orbifoldDiff} for the section $\tau$. Choose an adapted
  covering $\gamma$ and a divisor $E$ as in Computation~\ref{comp:obri}.

  If $D_i$ appears in $D$ with $\cC$-multiplicity $n_i = \infty$, it is again
  clear that $\gamma^{[*]}(\tau)$ has logarithmic poles along $E$ if and only if
  $\tau$ has logarithmic poles along $D_i$. Condition~\eqref{eq:orbifoldDiff}
  therefore says that $\tau$ is a section of $\Sym^{[d]}_{\cC} \Omega^1_X(\log
  D)$ if and only if $a_{m_1\ldots m_n} \leq m_1$ for all multi-indices $m_1,
  \ldots, m_n$ with $f_{m_1\ldots m_n} \not \equiv 0$.

  If $D_i$ appears in $D$ with $\cC$-multiplicity $n_i < \infty$, set
  $$
  P(\tau, D_i) = \max \bigl\{ P(\sigma_{m_1\ldots m_n}, D_i) \,|\,
  f_{m_1\ldots m_n} \not \equiv 0 \bigr\},
  $$
  where the $P(\sigma_{m_1\ldots m_n}, D_i)$ are the numbers defined in
  Equation~\eqref{eq:POs} above.  It is then clear that the reflexive
  pull-back $\gamma^{[*]}(\tau)$ is a rational section of the sheaf
  $\Sym^{[d]} \Omega^1_V(\log D_{\gamma})$ whose pole order along $E$ is
  precisely $P(\tau, D_i)$. Again, we obtain from
  Condition~\eqref{eq:orbifoldDiff} that $\tau$ is a section of
  $\Sym^{[d]}_{\cC} \Omega^1_X(\log D)$ if and only if $P(\tau, D_i) \leq 0$.
\end{computation}

\begin{obs}\label{obs:lfetc}
  Using the convention that $\frac{n_i -1}{n_i} = 1$ if $n_i = \infty$,
  Computation~\ref{comp:obri2} gives the following set of generators for
  $\Sym^{[d]}_{\cC}\Omega^1_U(\log D)$ near the point $x$,
  $$
  \left( \frac{1}{z_1^{\lfloor m_1 \cdot \frac{n_i-1}{n_i} \rfloor}}\cdot
    (dz_1)^{m_1} \cdot (dz_2)^{m_2} \cdots (dz_n)^{m_n} \right)_{\sum m_j = d.}
  $$
  Thus, it follows from Definition~\ref{def:adaptedCoordinates} that the sheaf
  $\Sym^{[d]}_{\cC}\Omega^1_X(\log D)$ is locally free wherever
  the pair $(X, \lceil D \rceil)$ is snc. In particular, it is locally free in
  codimension one. Since it is normal, we also see that
  \begin{equation}\label{eq:scincl}
    \Sym^{[d]}_{\cC} \Omega^1_X(\log D) \subseteq \Sym^{[d]} \Omega^1_X(\log
    \lceil D \rceil).
  \end{equation}
  In the case $d = 1$, we obtain additionally that $\Sym^{[1]}_{\cC}
  \bigl(\Omega^1_X(\log D)\bigr) = \Omega^{[1]}_X (\log \lfloor D\rfloor)$.
\end{obs}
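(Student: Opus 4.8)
The plan is to read everything off Computation~\ref{comp:obri2}, so that the proof is essentially bookkeeping once that computation is in hand. First I would record the membership criterion in unified form. By Equation~\eqref{eq:POs}, a single monomial term $\frac{f}{z_1^a}(dz_1)^{m_1}\cdots(dz_n)^{m_n}$, with $f$ not vanishing along $D_i$, lies in $\Sym^{[d]}_{\cC}\Omega^1_X(\log D)$ precisely when $n_i a-(n_i-1)m_1\le 0$, i.e.\ $a\le\lfloor m_1\cdot\frac{n_i-1}{n_i}\rfloor$; with the convention $\frac{\infty-1}{\infty}=1$ this also covers the case $n_i=\infty$, where the criterion furnished by Computation~\ref{comp:obri2} reads $a\le m_1=\lfloor m_1\cdot 1\rfloor$. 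Since by Computation~\ref{comp:obri2} a general section $\tau$ is a $\cC$-differential if and only if each of its monomial terms is, every such $\tau$ is an $\sO_U$-linear combination of the listed monomials: one absorbs the factor $z_1^{\lfloor m_1(n_i-1)/n_i\rfloor-a}$, which is holomorphic by the inequality, into the coefficient. Conversely each listed monomial is visibly a section. This establishes the asserted generating set near $x$.

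Next I would upgrade ``generators'' to ``free basis''. The listed monomials are $\sO_U$-linearly independent, as they involve pairwise distinct monomials $(dz_1)^{m_1}\cdots(dz_n)^{m_n}$ in the symmetric algebra, and their number equals $\binom{n+d-1}{d}=\rk\Sym^d\Omega^1_X$, where $n=\dim X$. As $\Sym^{[d]}_{\cC}\Omega^1_X(\log D)$ is a subsheaf of the rank-$\binom{n+d-1}{d}$ sheaf $\bigl(\Sym^{[d]}\Omega^1_X\bigr)(*\lceil D\rceil)$, an independent generating family of that cardinality must be a free basis, giving local freeness near any smooth point of a single component. At a general snc point, where several components $D_{i_1},\dots,D_{i_s}$ meet and are cut out by $z_1,\dots,z_s$, the identical argument applies branch by branch: applying Computation~\ref{comp:obri} along each $D_{i_j}$ separately shows a monomial is admissible iff $a_j\le\lfloor m_j\cdot\frac{n_{i_j}-1}{n_{i_j}}\rfloor$ for every $j\le s$, so the monomials $\prod_{j\le s}z_j^{-\lfloor m_j(n_{i_j}-1)/n_{i_j}\rfloor}(dz_1)^{m_1}\cdots(dz_n)^{m_n}$ again form a free basis. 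Hence the sheaf is locally free wherever $(X,\lceil D\rceil)$ is snc. The complement of the snc locus, together with $\sing X$ and, by Remark~\ref{rem:wherearecoords}, the locus admitting no adapted coordinates, is closed of codimension $\ge 2$, which yields local freeness in codimension one.

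For the inclusion~\eqref{eq:scincl} I would compare the two free bases on the snc locus. There $\Sym^{[d]}\Omega^1_X(\log\lceil D\rceil)$ is generated by $\prod_{j\le s}(dz_j/z_j)^{m_j}\prod_{j>s}(dz_j)^{m_j}$, i.e.\ by the same monomials but with the full pole order $m_j$ along each branch. Since $\lfloor m_j\cdot\frac{n_{i_j}-1}{n_{i_j}}\rfloor\le m_j$, each basis element of $\Sym^{[d]}_{\cC}\Omega^1_X(\log D)$ is an $\sO$-multiple of the corresponding basis element of $\Sym^{[d]}\Omega^1_X(\log\lceil D\rceil)$, so the inclusion holds over the snc locus. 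To globalize I would invoke normality: both sheaves sit inside the common ambient sheaf $\sM:=\bigl(\Sym^{[d]}\Omega^1_X\bigr)(*\lceil D\rceil)$, the source is normal (as noted after Definition~\ref{def:orbifoldDiff}), so its sections are determined off the codimension-$\ge 2$ bad set, and the reflexive target is likewise normal; a section of the source therefore restricts, on the big open set, to a section of the target and hence extends to one over the whole chart.

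Finally, for $d=1$ I would simply read off the generators: when $\sum m_j=1$ exactly one $m_j$ equals $1$, and $\lfloor\frac{n_i-1}{n_i}\rfloor$ is $0$ for $n_i<\infty$ and $1$ for $n_i=\infty$; thus the generators are $dz_j$ for $j\ge 2$ together with $dz_1$ when $n_i<\infty$ or $dz_1/z_1$ when $n_i=\infty$, which is exactly a local frame for $\Omega^1_X(\log\lfloor D\rfloor)$, since $\lfloor D\rfloor=\sum_{n_i=\infty}D_i$; passing to reflexive hulls gives $\Sym^{[1]}_{\cC}\Omega^1_X(\log D)=\Omega^{[1]}_X(\log\lfloor D\rfloor)$. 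The step I expect to need the most care is precisely the passage from the single-component Computations~\ref{comp:obri}--\ref{comp:obri2} to genuine snc crossings, where one must verify that the admissibility condition on a monomial factors as the conjunction of one condition per branch $z_j$; the codimension-two normality extension used for~\eqref{eq:scincl} is the only other point where one should be fully explicit.
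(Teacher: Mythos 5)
Your proposal is correct and takes essentially the same route as the paper, which treats the statement as an observation read directly off Computations~\ref{comp:obri} and~\ref{comp:obri2}: the per-component criterion $a \leq \lfloor m_1 \cdot \frac{n_i-1}{n_i} \rfloor$ (with the convention for $n_i = \infty$) yields the generators, hence local freeness on the snc locus and in codimension one, while normality gives the inclusion~\eqref{eq:scincl} and the identification for $d=1$. The only difference is that you make explicit the branch-by-branch verification at snc crossing points and the independent-generators count, details the paper leaves implicit, and your handling of both is sound.
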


\begin{obs}\label{obs:Pindep}
  In Computation~\ref{comp:obri2}, if $n_i < \infty$, the number $P(\tau,
  D_i)$ depends only on the section $\tau$ and on the component $D_i$, but not
  on the choice of adapted coordinates, or on the choice of the adapted
  morphism $\gamma$.
\end{obs}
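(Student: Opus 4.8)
The plan is to deduce both independence statements from the single identity already contained in Computation~\ref{comp:obri2}. That computation shows that for any choice of adapted coordinates $z_1, \ldots, z_n$ on a neighborhood $U$ of a general point of $D_i$, and for any adapted morphism $\gamma: V \to U$ together with any prime divisor $E \subset V$ dominating $D_i$, one has
$$
P(\tau, D_i) \;=\; \bigl(\text{pole order of } \gamma^{[*]}(\tau) \text{ along } E\bigr),
$$
where the left-hand side is the number $\max_m P(\sigma_{m_1\ldots m_n}, D_i)$ read off from the coordinate expansion of $\tau$, and the right-hand side is minus the order of the rational section $\gamma^{[*]}(\tau)$ of the sheaf $\Sym^{[d]}\Omega^1_V(\log D_{\gamma})$ along $E$.

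First I would observe that the two sides of this identity depend on disjoint sets of auxiliary choices. The left-hand side is manifestly a function of the adapted coordinates $z_\bullet$ alone and does not involve $\gamma$ or $E$. Conversely, the right-hand side is the negative of the valuation $\mathrm{ord}_E$ applied to $\gamma^{[*]}(\tau)$, computed with respect to $\Sym^{[d]}\Omega^1_V(\log D_{\gamma})$ near the generic point of $E$; since $V$ is normal it is smooth there and the sheaf is locally free, so this valuation is intrinsically defined and independent of any local coordinate choice on $V$, hence independent of the adapted coordinates $z_\bullet$ on $X$. Thus the right-hand side is a function of $(\gamma, E)$ alone.

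The conclusion is then purely formal. Fix one admissible pair $(\gamma_0, E_0)$ — such pairs exist, as adapted morphisms restrict over arbitrarily small neighborhoods of a general point of $D_i$, where adapted coordinates exist by Remark~\ref{rem:wherearecoords}. For every choice of adapted coordinates $z_\bullet$ the left-hand side equals the value of the right-hand side at $(\gamma_0, E_0)$, which is constant in $z_\bullet$; hence $P(\tau, D_i)$ is independent of the coordinates. Symmetrically, fixing one coordinate system and letting $(\gamma, E)$ vary, the right-hand side equals a fixed constant, so the pole order of $\gamma^{[*]}(\tau)$ along $E$ is the same for every adapted morphism. This is exactly the asserted independence.

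The only point requiring care — and the step I expect to be the genuine content — is the claim, built into Computation~\ref{comp:obri2}, that the identity holds for an \emph{arbitrary} adapted morphism and not merely for the local cyclic model $z_1 = w^{n_i}$. This rests on the fact that the pole order of $\gamma^{[*]}(\tau)$ along $E$ feels $\gamma$ only through the ramification index along $E$, which the definition of \emph{adapted} pins to be exactly $n_i$. Concretely, writing $t$ for a local equation of $E$, one has $\gamma^*z_1 = t^{n_i}\cdot(\text{unit})$, hence $\gamma^* dz_1 = t^{n_i-1}\omega_1$ with $\omega_1$ having non-vanishing $dt$-component; and since $\gamma$ is generically \'etale, the top form $\gamma^*dz_1 \wedge \cdots \wedge \gamma^*dz_n$ vanishes to order exactly $n_i-1$ along $E$, forcing $\omega_1, \gamma^*dz_2, \ldots, \gamma^*dz_n$ to be a local frame of $\Omega^1_V$ near the generic point of $E$. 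Consequently distinct monomials in the expansion of $\tau$ pull back to linearly independent sections with no cancellation, and each contributes pole order $n_i a_m - (n_i-1)m_1$, giving the stated maximum. I would present this frame computation (already implicit in Computation~\ref{comp:obri}) as the crux, with the remainder being the formal two-sided argument above.
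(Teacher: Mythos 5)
Your overall strategy --- read $P(\tau,D_i)$ as the intrinsic pole order of $\gamma^{[*]}(\tau)$ along $E$, observe that the two sides of this identity depend on disjoint sets of auxiliary choices, and conclude both independences formally --- is exactly the reasoning the paper leaves implicit: Observation~\ref{obs:Pindep} carries no separate proof and is treated as an immediate consequence of the assertion in Computation~\ref{comp:obri2} that the pole order of $\gamma^{[*]}(\tau)$ along $E$ is \emph{precisely} $P(\tau,D_i)$. You are also right that the genuine mathematical content is the no-cancellation claim for an \emph{arbitrary} adapted morphism, which the paper does not spell out. (One point you gesture at is indeed fine: two adapted charts $U$, $U'$ meeting $D_i$ always satisfy $U\cap U'\cap D_i\neq\emptyset$, since nonempty open subsets of the irreducible divisor $D_i$ intersect; so a common pair $(\gamma_0,E_0)$, e.g.\ the local cyclic model $z_1=w^{n_i}$, is always available for the formal comparison.)

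There is, however, a gap in your justification of the crux: you assume $\gamma$ is generically \'etale, i.e.\ generically finite. Definition~\ref{def:adaptedmorphism} only requires $\gamma$ to be surjective of \emph{constant fiber dimension}, and the paper essentially uses adapted and subadapted morphisms with positive-dimensional fibers (for instance the fibration $\pi|_{Y\setminus D}$ in the proof of Proposition~\ref{prop:spcdwn}). For such $\gamma$ the expression $\gamma^*dz_1\wedge\cdots\wedge\gamma^*dz_n$ is not a top form on $V$, no Jacobian or ramification-divisor count is available, and your derivation of the frame property collapses. The conclusion survives, but needs a different argument: along $E$ one has $\omega_1 = n_i u\,dt$ exactly (the term $t\,du$ vanishes on $E$), so $\omega_1|_E$ is purely conormal to $E$; meanwhile the restrictions of $\gamma^*dz_2,\ldots,\gamma^*dz_n$ to $TE$ are the pullbacks of the frame $dz_2,\ldots,dz_n$ of $\Omega^1_{D_i}$ under the dominant morphism $\gamma|_E : E \to D_i$, hence linearly independent at the generic point of $E$ (characteristic zero). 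Any linear relation among $\omega_1,\gamma^*dz_2,\ldots,\gamma^*dz_n$ at a general point of $E$ therefore has vanishing coefficients, so these $n$ forms extend to part of a local frame of $\Omega^1_V$ near the generic point of $E$; distinct degree-$d$ monomials in them remain linearly independent, the pole orders $n_i a_m - (n_i-1)m_1$ contribute without cancellation, and the identity holds for \emph{every} adapted $\gamma$, finite or not. With this replacement your proof is complete; for generically finite $\gamma$ your tame-ramification computation is correct as stated.
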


\begin{obs}\label{obs:Pneg}
  In Computation~\ref{comp:obri2}, if $n_i < \infty$ and if the number
  $P(\tau, D_i)$ is non-positive, then $\gamma^{[*]}(\tau)$ is a section of
  the sheaf $\Sym^{[d]} \Omega^1_V(\log D_{\gamma})$ that vanishes along $E$
  precisely with multiplicity $-P(\tau, D_i)$.
\end{obs}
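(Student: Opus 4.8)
The statement concerns only the order of vanishing of $\gamma^{[*]}(\tau)$ along the single prime divisor $E$, which is a codimension-one phenomenon. Since the reflexive pull-back $\gamma^{[*]}$ agrees with the ordinary pull-back $\gamma^*$ away from a set of codimension at least two, and since all the sheaves in sight are reflexive, the plan is to compute everything at the generic point of $E$. Observe first that because $n_i < \infty$, the component $D_i$ does not appear in $\lfloor D \rfloor$, so $E$ is \emph{not} a component of $D_\gamma = \supp \gamma^*(\lfloor D \rfloor)$; near the generic point of $E$ the sheaf $\Sym^{[d]}\Omega^1_V(\log D_\gamma)$ therefore coincides with $\Sym^{[d]}\Omega^1_V$, so that ``vanishing along $E$'' refers to honest vanishing of an ordinary symmetric differential. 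There $V$ is smooth, $E$ is cut out by a single coordinate $w$, the remaining coordinates $z_2, \ldots, z_n$ pull back from $U$, and $\gamma$ is given by the standard local model $z_1 = w^{n_i}$, reflecting the fact that $E$ appears in $\gamma^*(D_i)$ with multiplicity exactly $n_i$.

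With this normalization I would recall the single-term computation underlying Computation~\ref{comp:obri}. Substituting $z_1 = w^{n_i}$ gives $dz_1 = n_i\, w^{n_i-1}\, dw$ and $z_1^{-a} = w^{-n_i a}$, while $f_{m_1\ldots m_n}$ pulls back to a function whose restriction to $E$ is not identically zero, since $f_{m_1\ldots m_n}$ does not vanish along $D_i$. Hence, for each nonzero summand,
\begin{equation*}
  \gamma^{[*]}(\sigma_{m_1\ldots m_n}) = u_{m_1\ldots m_n} \cdot
  w^{-P(\sigma_{m_1\ldots m_n},\, D_i)} \cdot (dw)^{m_1}(dz_2)^{m_2}\cdots(dz_n)^{m_n},
\end{equation*}
where $u_{m_1\ldots m_n}$ is a local holomorphic function whose restriction to $E$ is not identically zero, and $P(\sigma_{m_1\ldots m_n}, D_i)$ is the integer from~\eqref{eq:POs}. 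The factor $(dw)^{m_1}(dz_2)^{m_2}\cdots(dz_n)^{m_n}$ is a basis monomial of the locally free sheaf $\Sym^{[d]}\Omega^1_V$ in the coordinates $(w, z_2, \ldots, z_n)$ near the generic point of $E$, and so the order of vanishing of this single summand along $E$ is exactly $-P(\sigma_{m_1\ldots m_n}, D_i)$.

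The decisive point is that there is no cancellation between the summands. A multi-index $(m_1, \ldots, m_n)$ is uniquely recovered from its monomial $(dw)^{m_1}(dz_2)^{m_2}\cdots(dz_n)^{m_n}$, the exponent of $dw$ being $m_1$ and that of $dz_j$ being $m_j$; hence distinct nonzero summands of $\tau$ contribute to distinct, $\sO_V$-linearly independent basis elements of the symmetric power. Consequently the order of vanishing of $\gamma^{[*]}(\tau) = \sum \gamma^{[*]}(\sigma_{m_1\ldots m_n})$ along $E$ is the minimum of the orders of vanishing of its summands, with no possibility of the leading terms annihilating one another. That minimum equals
\begin{equation*}
  \min_{f_{m_1\ldots m_n} \not\equiv 0} \bigl(-P(\sigma_{m_1\ldots m_n}, D_i)\bigr)
  = -\max_{f_{m_1\ldots m_n} \not\equiv 0} P(\sigma_{m_1\ldots m_n}, D_i)
  = -P(\tau, D_i)
\end{equation*}
by the definition of $P(\tau, D_i)$ in Computation~\ref{comp:obri2}. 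When $P(\tau, D_i) \leq 0$ this number is non-negative, so $\gamma^{[*]}(\tau)$ is a genuine section of $\Sym^{[d]}\Omega^1_V(\log D_\gamma)$ vanishing along $E$ to order precisely $-P(\tau, D_i)$.

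The argument is in essence a bookkeeping refinement of Computation~\ref{comp:obri2}, and the only step that requires genuine care is the no-cancellation claim: one must verify that the summand (or summands) realizing the maximal value $P(\tau, D_i)$, equivalently the minimal order of vanishing, cannot be wiped out by the remaining summands. The distinctness of the symmetric monomials settles this, since two summands can interfere only when they share the monomial $(dw)^{m_1}(dz_2)^{m_2}\cdots(dz_n)^{m_n}$, which forces their multi-indices, and hence the summands themselves, to coincide.
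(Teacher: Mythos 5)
Your proof is correct and is essentially the argument the paper itself relies on: Observation~\ref{obs:Pneg} is stated without separate proof as an immediate consequence of Computation~\ref{comp:obri2}, whose claim that the pole order of $\gamma^{[*]}(\tau)$ along $E$ is \emph{precisely} $P(\tau, D_i)$ is exactly what your term-by-term substitution $z_1 = w^{n_i}$ together with the no-cancellation observation establishes. The one cosmetic caveat is that adapted morphisms in Definition~\ref{def:adaptedmorphism} need only have constant fiber dimension and need not be generically finite, so $w, \gamma^* z_2, \ldots, \gamma^* z_n$ may be only part of a coordinate system on $V$ near a general point of $E$ rather than all of one --- but this does not affect your argument, since these functions still have independent differentials there (as $E$ dominates $D_i$), and hence the symmetric monomials $(dw)^{m_1}(d\gamma^* z_2)^{m_2} \cdots (d\gamma^* z_n)^{m_n}$ remain part of a local basis of $\Sym^{d}\Omega^1_V$, so distinct summands still cannot cancel.
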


\subsection{Consequences of the local computation}

Computations~\ref{comp:obri} and \ref{comp:obri2} have several immediate
consequences which we note for future reference. It is not very hard to see
that the computations and observations of Section~\ref{subsec:localcoord} also
hold for sections in $\Sym^{[d]}_{\cC} \Omega^p_X(\log D)$, for all numbers
$p$. The consequences of Computation~\ref{comp:obri} which we draw in this
section also hold for all $p$, and are stated in that generality. To keep the
paper reasonably sized, we leave it to the reader to make the analogous
computations in case $p \not = 1$.

\subsubsection{Inclusions, reflexivity}

In complete analogy to Inclusion~\eqref{eq:scincl} above, we can view the
sheaf of $\cC$-differentials as a subsheaf of the logarithmic differentals,
for any $p$. In Corollary~\ref{cor:reflexivity}, we apply this inclusion to
prove reflexivity of the sheaf of $\cC$-differentials.

\begin{cor}[Inclusion of $\cC$-differentials into logarithmic differentials]\label{cor:inclusion}
  There exists an inclusion $\Sym^{[d]}_{\cC} \Omega^p_X(\log D) \subseteq
  \Sym^{[d]} \Omega^p_X(\log \lceil D \rceil)$. \qed
\end{cor}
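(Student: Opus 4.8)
The plan is to establish the inclusion $\Sym^{[d]}_{\cC} \Omega^p_X(\log D) \subseteq \Sym^{[d]} \Omega^p_X(\log \lceil D \rceil)$ by reducing to a local, codimension-one statement, exactly as was done for the case $p=1$ in Observation~\ref{obs:lfetc}, where Inclusion~\eqref{eq:scincl} was obtained. Both sheaves in question are torsion-free and normal as sheaves of $\sO_X$-modules (the sheaf on the left by the remark following Definition~\ref{def:orbifoldDiff}, and the sheaf on the right as the reflexive pull-back construction it is). Since a normal sheaf is determined by its behavior in codimension one, it suffices to construct the inclusion over the open locus where the pair $(X, \lceil D \rceil)$ is snc, which is the complement of a set of codimension $\geq 2$.

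First I would restrict attention to a neighborhood $U$ of a general point $x$ of a component $D_i$ of $D$, equipped with an adapted system of coordinates $z_1, \ldots, z_n$ as in Definition~\ref{def:adaptedCoordinates}. There, the analogue of Computation~\ref{comp:obri2} for $p$-forms (whose details the paper explicitly leaves to the reader) describes a section $\tau$ of $\bigl(\Sym^{[d]} \Omega^p_X\bigr)(*\lceil D \rceil)$ by its pole order $a$ along $D_i$, and the condition for $\tau$ to lie in $\Sym^{[d]}_{\cC} \Omega^p_X(\log D)$ is the numerical inequality $P(\tau, D_i) \leq 0$, i.e. $n_i \cdot a \leq (n_i - 1)\cdot m_1$ term-by-term. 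The key observation is that this forces $a \leq m_1 \cdot \frac{n_i-1}{n_i} \leq m_1$, hence in particular $a \leq m_1$, which is exactly the condition for $\tau$ to be a section of $\Sym^{[d]} \Omega^p_X(\log \lceil D \rceil)$ (logarithmic poles only). Thus every $\cC$-differential is in particular a logarithmic differential with respect to the reduced divisor $\lceil D \rceil$.

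From this local comparison I would conclude that the natural inclusion of sheaves-with-poles restricts to an inclusion $\Sym^{[d]}_{\cC} \Omega^p_X(\log D) \hookrightarrow \Sym^{[d]} \Omega^p_X(\log \lceil D \rceil)$ over the snc locus $X^\circ$. Because both sheaves are normal, this inclusion defined on $X^\circ$ extends uniquely across the codimension-$\geq 2$ complement, giving the desired global inclusion on all of $X$.

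I expect the only genuine subtlety to be the verification that the analogue of Computation~\ref{comp:obri2} goes through for $p \geq 2$ rather than just $p = 1$; but since the pole-order bookkeeping along a single smooth component $D_i$ is controlled entirely by the $dz_1$-contribution in each monomial and is insensitive to which wedge-components the remaining differentials occupy, the argument is formally identical, and the comparison $a \leq m_1 \cdot \frac{n_i-1}{n_i} \leq m_1$ remains the crux. Everything else is the routine normality-extension principle, so the statement should follow immediately without further work.
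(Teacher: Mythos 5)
Your proposal is correct and follows essentially the same route as the paper, which deduces the inclusion directly from the adapted-coordinate computations: the bound $a \leq \lfloor m_1 \cdot \frac{n_i-1}{n_i} \rfloor \leq m_1$ on pole orders from Computation~\ref{comp:obri2} and Observation~\ref{obs:lfetc}, extended to all $p$ (a verification the paper likewise leaves to the reader), combined with normality of the sheaves to extend the inclusion across the codimension-two locus where $(X, \lceil D \rceil)$ fails to be snc. Your identification of the crux inequality and of the normality-extension step matches the paper's intended argument exactly.
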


\begin{cor}[Reflexivity of $\cC$-differentials]\label{cor:reflexivity}
  The sheaf $\Sym^{[d]}_{\cC} \Omega^p_X(\log D)$ is a coherent, reflexive
  sheaf of $\sO_X$-modules, locally free wherever the pair $(X, \lceil D
  \rceil)$ is snc.
\end{cor}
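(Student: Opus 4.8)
The plan is to deduce the corollary from the observations already in place, so that reflexivity becomes a formal consequence of coherence. The remark following Definition~\ref{def:orbifoldDiff} has already recorded that $\sA := \Sym^{[d]}_{\cC}\Omega^p_X(\log D)$ is torsion-free and normal as a sheaf of $\sO_X$-modules, and a \emph{coherent} sheaf on a normal variety which is both torsion-free and normal is automatically reflexive, cf.~\cite{OSS}. Similarly, local freeness on the snc locus is exactly the content of the $p$-form analog of Observation~\ref{obs:lfetc}. Thus the only genuine task is to prove that $\sA$ is coherent.

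First I would isolate the good locus. Write $X^\circ \subseteq X$ for the open set where the pair $(X, \lceil D \rceil)$ is snc, and set $Z := X \setminus X^\circ$. Since $X$ is normal and the components of $D$ are smooth away from a subset of codimension $\geq 2$ (Remark~\ref{rem:wherearecoords}), one has $\codim_X Z \geq 2$. By Observation~\ref{obs:lfetc} the restriction $\sA|_{X^\circ}$ is locally free, in particular coherent.

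The key step is to recover $\sA$ from this restriction. Let $j : X^\circ \hookrightarrow X$ be the inclusion. Because $\sA$ is normal in the sense of \cite[Def.~1.1.11]{OSS} and $\codim_X Z \geq 2$, the restriction maps $\Gamma(U,\, \sA) \to \Gamma(U \cap X^\circ,\, \sA)$ are bijective for every open $U$, so that $\sA = j_*\bigl(\sA|_{X^\circ}\bigr)$. As $X^\circ$ is a quasi-compact open subset of the Noetherian scheme $X$, the pushforward $j_*\bigl(\sA|_{X^\circ}\bigr)$ is quasi-coherent. On the other hand, Corollary~\ref{cor:inclusion} exhibits $\sA$ as a subsheaf of the coherent sheaf $\Sym^{[d]} \Omega^p_X(\log \lceil D \rceil)$. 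Since a quasi-coherent subsheaf of a coherent sheaf on a Noetherian scheme is again coherent, this yields coherence of $\sA$.

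With coherence established, the conclusion follows as indicated: $\sA$ is coherent, torsion-free and normal, hence reflexive, and it is locally free on $X^\circ$ by Observation~\ref{obs:lfetc}. I expect the coherence statement to be the main obstacle, precisely because $\sA$ is defined through a pull-back condition on adapted covers rather than directly as a finitely generated module; the device that resolves it is to combine normality—which rewrites $\sA$ as a pushforward from the snc locus—with the containment of $\sA$ in the manifestly coherent logarithmic sheaf. One point I would check along the way is that the $p$-form versions of Observation~\ref{obs:lfetc} and of the torsion-free/normal remark indeed hold, since the text defers those computations to the reader.
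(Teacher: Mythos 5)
Your proposal is correct, and it shares the paper's overall skeleton---reduce everything to coherence, using Corollary~\ref{cor:inclusion} to embed $\sA := \Sym^{[d]}_{\cC}\Omega^p_X(\log D)$ into the coherent sheaf $\Sym^{[d]}\Omega^p_X(\log \lceil D \rceil)$, Observation~\ref{obs:lfetc} for local freeness on the snc locus $X^\circ$, whose complement has codimension $\geq 2$, and normality of $\sA$---but the closing mechanism is genuinely different. The paper invokes the classical extension theorem for coherent sheaves, \cite[I.~Thm.~9.4.7]{EGA1}, to produce a coherent subsheaf $\sF' \subseteq \Sym^{[d]}\Omega^p_X(\log \lceil D \rceil)$ agreeing with $\sA$ on $X^\circ$, and then uses normality and the smallness of $X \setminus X^\circ$ to identify $\sA = (\sF')^{**}$, which delivers coherence and reflexivity in one stroke. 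You instead use normality to identify $\sA = j_*\bigl(\sA|_{X^\circ}\bigr)$ outright, observe that this pushforward along a quasi-compact open immersion of Noetherian schemes is quasi-coherent, and conclude coherence from the standard fact that a quasi-coherent subsheaf of a coherent sheaf on a Noetherian scheme is coherent; reflexivity then follows from the coherent-plus-torsion-free-plus-normal criterion, which is exactly the implication announced in the remark following Definition~\ref{def:orbifoldDiff}. Your route trades EGA's extension theorem for two more elementary facts and makes explicit the identification $\sA = j_*\bigl(\sA|_{X^\circ}\bigr)$ that remains implicit in the paper; the paper's version is marginally shorter and obtains reflexivity without quoting the torsion-free/normal criterion separately. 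One limitation both arguments share: they are scheme-theoretic, whereas Definition~\ref{def:orbifold} nominally allows $X$ to be a complex space, in which case one would need an analytic extension theorem of Serre--Siu type in place of either mechanism; since the paper's own proof cites EGA, this is a shared restriction rather than a gap in your argument. Your closing caveat about the $p$-form versions of Observation~\ref{obs:lfetc} and of the torsion-free/normal remark is prudent but harmless, since the paper explicitly asserts at the start of its discussion of consequences in Section~\ref{sec:differentials} that the local computations hold for all $p$.
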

\begin{proof}
  Corollary~\ref{cor:inclusion} represents $\sF := \Sym^{[d]}_{\cC}
  \Omega^p_X(\log D)$ as a subsheaf of the coherent sheaf $\sG := \Sym^{[d]}
  \Omega^p_X(\log \lceil D \rceil)$. We have also seen in
  Observation~\ref{obs:lfetc} that $\sF$ is locally free wherever that pair
  $(X, \lceil D \rceil)$ is snc. In particular it is locally free on an open
  subset $U \subseteq X$ whose complement has codimension $\geq 2$. In this
  setting, it follows from the classical extension theorem of coherent
  sheaves, \cite[I.~Thm.~9.4.7]{EGA1}, that there exists a coherent subsheaf
  $\sF' \subseteq \sG$ whose restriction to $U$ agrees with $\sF$. Since $\sF$
  is normal, and since the complement of $U$ is small, we have $\sF =
  (\sF')^{**}$.
\end{proof}

\subsubsection{Independence of $P(\tau, D_i)$ on choices, definition of defect
  divisors}

The independence of the numbers $P(\tau, D_i)$ on the choice of a particular
open set and an adapted morphism allows us to define a ``defect divisor'' that
measures additional fractional positivity along a $\cC$-differential. In
Section~\ref{ssec:defect}, we will extend this notion to sheaves of
differentials. Our starting point is the following Corollary, which summarizes
Observations~\ref{obs:Pindep} and \ref{obs:Pneg}.

\begin{cordefn}\label{cdef:P}
  Let $(X, D)$ be a $\cC$-pair and $\sigma$ a section of $\bigl( \Sym^{[d]}
  \Omega^p_X \bigr)(*\lceil D \rceil)$. Further, consider an open set $U
  \subseteq X$ and an adapted morphism $\gamma : V \to U$.  If $D_i \subseteq
  D$ is an irreducible component that intersects $U$ and has finite
  $\cC$-multiplicity and if $E \subset V$ is any divisor that dominates $D_i
  \cap U$, then $\gamma^{[*]}(\sigma)$ is a rational section of $\Sym^{[d]}
  \Omega^p_V(\log D_\gamma)$ whose pole order $P(\sigma, D_i)$
  along $E$ depends only on $\sigma$ and on the component $D_i \subseteq D$,
  but not on the choice of $U$, the morphism $\gamma$ or the particular
  divisor that dominates $D_i$.
  
  The section $\sigma$ is in $\Gamma\bigl( U,\, \Sym^{[d]} _{\cC}
  \Omega^p_X(\log D) \bigr)$ if and only if $P(\sigma, D_i) \leq 0$ for all
  components $D_i \subseteq D$ with finite $\cC$-multiplicity. \qed
\end{cordefn}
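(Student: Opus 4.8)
The plan is to deduce both assertions directly from the local description of the previous subsection, once that description is extended from $p=1$ to arbitrary $p$. Since every sheaf occurring here is reflexive, it suffices to argue in codimension one: by Remark~\ref{rem:wherearecoords} I may restrict to a general, smooth point $x$ of the component $D_i$, equip a neighbourhood with an adapted system of coordinates $z_1, \ldots, z_n$ in which $D_i = \{z_1 = 0\}$, and expand $\sigma$ as a locally finite sum of monomials as in Computation~\ref{comp:obri2}. The only genuinely new ingredient for $p > 1$ is bookkeeping: a monomial of $\bigl(\Sym^{[d]}\Omega^p_X\bigr)(*\lceil D \rceil)$ is a symmetric product of $d$ wedge terms $dz_{I_j}$, and in the pole-order estimate the role played by the exponent $m_1$ in the case $p=1$ is now taken over by the number $b$ of those $d$ factors whose index set contains $1$. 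With this substitution the formula~\eqref{eq:POs} and the maximum over monomials carry over verbatim, so I may invoke Observations~\ref{obs:Pindep} and~\ref{obs:Pneg} for all $p$.

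First I would establish well-definedness of $P(\sigma, D_i)$. Fix an adapted morphism $\gamma : V \to U$ and a divisor $E \subset V$ dominating $D_i \cap U$, and choose adapted coordinates $w_1, \ldots, w_n$ on $V$ with $E = \{w_1 = 0\}$. The adaptedness condition of Definition~\ref{def:adaptedmorphism} forces $E$ to occur in $\gamma^*(D_i)$ with multiplicity exactly $n_i$, so $\gamma^*(z_1) = w_1^{n_i}\cdot h$ for a function $h$ that is a unit at the general point of $E$. Consequently $\gamma^{[*]}(z_1^{-a})$ contributes a pole of order $a\,n_i$ along $E$, while each pulled-back factor $\gamma^*(dz_1) = d\bigl(w_1^{n_i} h\bigr)$ vanishes to order $n_i - 1$ along $E$, and the remaining differentials $\gamma^*(dz_2), \ldots, \gamma^*(dz_n)$ are regular and generically non-vanishing there. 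This gives the pole order $a\,n_i - (n_i-1)\,b = P(\sigma, D_i)$, an expression involving only the pole order $a$ of $\sigma$ along $D_i$, the $dz_1$-degree $b$, and the intrinsic multiplicity $n_i$ — none of which refer to $U$, $\gamma$, or $E$. This is exactly Observation~\ref{obs:Pindep} for general $p$, and proves the first assertion.

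For the second assertion I would unwind Definition~\ref{def:orbifoldDiff} together with its Explanation. At a general point, a divisor $E$ dominating a finite-multiplicity component $D_i$ dominates no component of $\lfloor D \rfloor$, so $E \not\subseteq D_\gamma$, and membership $\gamma^{[*]}(\sigma) \in \Gamma\bigl(V,\,\Sym^{[d]}\Omega^p_V(\log D_\gamma)\bigr)$ requires $\gamma^{[*]}(\sigma)$ to have \emph{no} pole along $E$, i.e.\ $P(\sigma, D_i) \leq 0$ by Observation~\ref{obs:Pneg}. Conversely, if $P(\sigma, D_i)\leq 0$ for every finite-multiplicity component, the same observation makes $\gamma^{[*]}(\sigma)$ regular along each such $E$; along the divisors over $\lfloor D \rfloor$ the reflexive pull-back has at most logarithmic poles by the standard comparison already invoked in Computation~\ref{comp:obri2}. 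Hence~\eqref{eq:orbifoldDiff} holds, and since $\gamma$ and the open set were arbitrary, $\sigma$ is a section of $\Sym^{[d]}_{\cC}\Omega^p_X(\log D)$.

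The step I expect to demand the most care is the pull-back computation underlying well-definedness: one must verify that the pole order along $E$ is governed precisely by~\eqref{eq:POs}, that it is unaffected by the choice of adapted coordinates on $V$ or by the unit $h$ in $\gamma^*(z_1)$, and that the transverse differentials $dz_2, \ldots, dz_n$ contribute neither poles nor spurious vanishing at the general point of $E$. Because we test only in codimension one and the adaptedness condition pins the ramification order of $E$ over $D_i$ to be exactly $n_i$, this reduces to the monomial bookkeeping already carried out for $p = 1$, so no essentially new difficulty arises.
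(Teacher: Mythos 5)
Your proposal is correct and follows exactly the route the paper itself takes: the paper states this corollary with no separate proof, presenting it as a summary of Computations~\ref{comp:obri} and \ref{comp:obri2} together with Observations~\ref{obs:Pindep} and \ref{obs:Pneg}, with the extension from $p=1$ to general $p$ explicitly left to the reader. Your write-up simply carries out that reader-left bookkeeping (replacing the exponent $m_1$ by the number $b$ of wedge factors containing $dz_1$, so that $P = a\,n_i - (n_i-1)\,b$) and then unwinds Definition~\ref{def:orbifoldDiff}, which matches the paper's intent in every particular.
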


\begin{cor}\label{cor:indep}
  To check the conditions spelled out in Definition~\ref{def:orbifoldDiff}, it
  suffices to consider a single covering by open sets $(U_{\alpha})_{\alpha
    \in A}$ and for each $U_{\alpha}$ a single adapted morphism. \qed
\end{cor}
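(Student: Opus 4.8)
The plan is to deduce the statement directly from the independence asserted in Corollary and Definition~\ref{cdef:P}, using only that membership in $\Sym^{[d]}_{\cC}\Omega^p_X(\log D)$ is a local condition that decouples into conditions along the individual components $D_i$. First I would observe that being a section of the sheaf $\Sym^{[d]}_{\cC}\Omega^p_X(\log D)$ is local on $X$, so it may be tested on any open cover; it therefore suffices to verify the defining condition of Definition~\ref{def:orbifoldDiff} near the generic point of each component $D_i \subseteq D$. Because $\sigma$ is assumed to lie in $(\Sym^{[d]}\Omega^p_X)(*\lceil D\rceil)$, it has poles only along $\supp D$, so the only conditions to check are: along a divisor $E$ dominating an infinite-multiplicity component, at most logarithmic poles; and along a divisor dominating a finite-multiplicity component $D_i$, the vanishing condition $P(\sigma, D_i) \leq 0$.

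Next I would invoke the independence statement. For the finite-multiplicity components, Corollary and Definition~\ref{cdef:P} (resting on Observations~\ref{obs:Pindep} and~\ref{obs:Pneg}) guarantees that $P(\sigma, D_i)$ depends only on $\sigma$ and on $D_i$, and not on the open set, the adapted morphism, or the divisor $E$ dominating $D_i$. For the infinite-multiplicity components, the corresponding independence is the standard fact, recalled in Computations~\ref{comp:obri} and~\ref{comp:obri2}, that an adapted pullback has logarithmic poles if and only if the form itself does. Thus each component-wise condition can be read off from a single adapted morphism over a single open set meeting $D_i$.

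Finally I would assemble these observations. Given any cover $(U_\alpha)_{\alpha\in A}$ together with one adapted morphism $\gamma_\alpha: V_\alpha \to U_\alpha$ per chart, every component $D_i$ meets some $U_\alpha$, and by the independence above the condition along $D_i$ is correctly decided by $\gamma_\alpha^{[*]}(\sigma|_{U_\alpha})$. Since the collection of these conditions over all $i$ coincides with the defining conditions of Definition~\ref{def:orbifoldDiff}, the single cover with one morphism per chart suffices. I do not expect a genuine obstacle here: the substantive work has already been carried out in establishing that $P(\sigma, D_i)$ is a well-defined invariant independent of all choices. The only point requiring a little care is the uniform treatment of the infinite-multiplicity components alongside the finite ones, and the verification that the defining condition really does decouple component-by-component rather than demanding simultaneous control over all of $D$ at once.
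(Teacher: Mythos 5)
Your proposal is correct and takes essentially the same route as the paper: the paper states this corollary with no written proof, treating it as an immediate consequence of Corollary and Definition~\ref{cdef:P} (i.e., of Observations~\ref{obs:Pindep} and \ref{obs:Pneg}), which is precisely the independence of $P(\sigma, D_i)$ on all choices that you invoke. Your explicit handling of the infinite-multiplicity components and of the component-by-component decoupling merely spells out what the paper leaves implicit.
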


Using the numbers $P$ defined in \ref{cdef:P}, we define the defect divisor of
a $\cC$-differential.

\begin{defn}[Defect divisor of a $\cC$-differential]
  If $(X, D)$ is a $\cC$-pair, $D = \sum \frac{n_i-1}{n_i}\cdot D_i$ and
  $\sigma$ a section of $\Sym^{[d]} _{\cC} \Omega^p_X(\log D)$, consider the
  following $\Q$-Weil divisor,
  $$
  R(\sigma) := \sum_{\substack{D_i \subseteq D \text{ with } n_i < \infty}}
  \frac{-P(\sigma, D_i)}{n_i} \cdot D_i.
  $$
  We call $R(\sigma)$ the \emph{defect divisor} of the section $\sigma$.
\end{defn}

\begin{rem}\label{rem:univpropofdefect}
  The defect divisor $R(\sigma)$ is always effective. If two sections $\sigma$
  and $\tau$ of $\Sym^{[d]} _{\cC} \Omega^p_X(\log D)$ differ only by
  multiplication with a nowhere-vanishing function, their defect divisors
  $R(\sigma)$ and $R(\tau)$ agree.
\end{rem}

\subsubsection{The symmetric algebra of $\cC$-differentials}

The special form of the generators for $\Sym^{[d]}_{\cC}\bigl(\Omega^p_U(\log
D)\bigr)$ found in Observation~\ref{obs:lfetc} makes it possible to interpret
a tensor product of symmetric $\cC$-differentials as a
$\cC$-differential. More precisely, we obtain the following multiplication
morphisms.

\begin{cor}\label{cor:sympow}
  Since $\lfloor a \rfloor+\lfloor b \rfloor \leq \lfloor a+b \rfloor$ for any
  pair of numbers $a$ and $b$, the multiplication morphisms of symmetric
  differentials extend to multiplication morphisms of symmetric $\cC$-
  differentials. More precisely, given any two numbers $d_1$, $d_2 \in \mathbb
  N$, we obtain sheaf morphisms
  \begin{align*}
    \Sym^{[d_1]}_{\cC} \Omega^p_X(\log D) \otimes \Sym^{[d_2]}_{\cC}
    \Omega^p_X(\log D) & \to \Sym^{[d_1+d_2]}_{\cC} \Omega^p_X(\log D) \\
    \Sym^{d_1} \left( \Sym^{[d_2]}_{\cC} \Omega^p_X(\log D) \right) & \to
    \Sym^{[d_1 \cdot d_2]}_{\cC} \Omega^p_X(\log D)
  \end{align*}
  that agree outside of $\supp(D)$ with the usual multiplication maps.  \qed
\end{cor}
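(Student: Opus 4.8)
The plan is to verify both multiplication morphisms locally, using the explicit generators found in Observation~\ref{obs:lfetc}, and then to argue that the local constructions glue to global sheaf morphisms. The whole point is that the sheaves $\Sym^{[d]}_{\cC}\Omega^p_X(\log D)$ are reflexive and agree, away from the codimension-$\geq 2$ locus where $(X,\lceil D\rceil)$ fails to be snc, with locally free sheaves having a clean monomial basis. So it suffices to produce the morphisms on the open snc locus $X^\circ$ and invoke reflexivity to extend.

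First I would restrict to an open set $U$ with adapted coordinates $z_1,\ldots,z_n$ as in Computation~\ref{comp:obri}, sitting over a single component $D_i$ with $\cC$-multiplicity $n_i$. By Observation~\ref{obs:lfetc}, the sheaf $\Sym^{[d]}_{\cC}\Omega^1_U(\log D)$ is freely generated by the forms
$$
\frac{1}{z_1^{\lfloor m_1\cdot\frac{n_i-1}{n_i}\rfloor}}\cdot(dz_1)^{m_1}\cdots(dz_n)^{m_n},\qquad \textstyle\sum_j m_j=d.
$$
The usual multiplication of symmetric forms sends the product of a degree-$d_1$ generator (with exponents $m_j$) and a degree-$d_2$ generator (with exponents $m_j'$) to a form with a pole of order $\lfloor m_1\frac{n_i-1}{n_i}\rfloor+\lfloor m_1'\frac{n_i-1}{n_i}\rfloor$ along $D_i$, whereas the corresponding generator of the target $\Sym^{[d_1+d_2]}_{\cC}\Omega^1_U(\log D)$ is allowed a pole of order $\lfloor(m_1+m_1')\frac{n_i-1}{n_i}\rfloor$. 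The inequality stated in the Corollary, $\lfloor a\rfloor+\lfloor b\rfloor\leq\lfloor a+b\rfloor$, applied with $a=m_1\frac{n_i-1}{n_i}$ and $b=m_1'\frac{n_i-1}{n_i}$, shows precisely that the product form is a section of the target sheaf. Hence the ordinary multiplication map restricts to a map $\Sym^{[d_1]}_{\cC}\Omega^1_U(\log D)\otimes\Sym^{[d_2]}_{\cC}\Omega^1_U(\log D)\to\Sym^{[d_1+d_2]}_{\cC}\Omega^1_U(\log D)$, and this is by construction the restriction of the usual multiplication outside $\supp(D)$. The second morphism follows by iterating the first $d_1$ times.

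To globalize, I would note that the usual multiplication morphism of symmetric logarithmic differentials exists as a global map $\Sym^{[d_1]}\Omega^p_X(\log\lceil D\rceil)\otimes\Sym^{[d_2]}\Omega^p_X(\log\lceil D\rceil)\to\Sym^{[d_1+d_2]}\Omega^p_X(\log\lceil D\rceil)$, and by Corollary~\ref{cor:inclusion} each $\cC$-differential sheaf sits inside the corresponding logarithmic one. The local computation above shows that the image of the restricted map lands inside the $\cC$-subsheaf on the snc locus $X^\circ$, whose complement has codimension $\geq 2$. Since all three $\cC$-sheaves are reflexive by Corollary~\ref{cor:reflexivity}, a sheaf morphism into $\Sym^{[d_1+d_2]}_{\cC}\Omega^p_X(\log D)$ defined on $X^\circ$ extends uniquely across the codimension-$\geq 2$ complement; this gives the desired global morphism agreeing with ordinary multiplication off $\supp(D)$.

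The main obstacle is genuinely just the bookkeeping of pole orders along each component simultaneously: one must check the floor inequality component-by-component, making sure that a monomial in $z_1,\ldots,z_n$ with mixed symmetric weights lands in the span of the allowed generators of the target, which in the case $p\neq 1$ requires the analogous (but unwritten) computation that the excerpt defers to the reader. Once the local picture is fixed, the reflexive-extension step is routine and uses nothing beyond Corollary~\ref{cor:reflexivity}; the only subtlety is to confirm the two multiplication maps are compatible so that the second statement really follows from the first by composition.
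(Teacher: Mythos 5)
Your proof is correct and takes essentially the same route as the paper, which states the corollary with a \qed precisely because the whole argument is the floor inequality applied to the local generators of Observation~\ref{obs:lfetc}, combined with reflexivity of the sheaves involved. Your write-up simply makes explicit the codimension-two extension step and the factorization of the iterated multiplication through $\Sym^{d_1}$, both of which the paper leaves implicit.
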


We obtain a symmetric algebra of $\cC$-differentials, which will allow us to
define a variant of the Kodaira-Iitaka dimension for sheaves of
$\cC$-differentials in Section~\ref{sec:SOD}.

\begin{cor}[Symmetric algebra of $\cC$-differentials]
  With the multiplication morphisms of Corollary~\ref{cor:sympow}, the direct
  sum $\bigoplus_{d \geq 0} \Sym^{[d]}_{\cC} \Omega^p_X(\log D)$ is a sheaf of
  $\sO_X$-algebras. \qed
\end{cor}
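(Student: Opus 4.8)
The plan is to check the algebra axioms one at a time, in each case reducing to the corresponding identity for the ordinary symmetric differential forms on the open set where the two multiplications coincide, and then propagating that identity across $\supp(D)$ by a torsion-freeness argument. The multiplication maps themselves are already supplied by Corollary~\ref{cor:sympow}, so nothing needs to be constructed; only the algebra laws must be verified. First I would record the grading and the unit. The maps of Corollary~\ref{cor:sympow} send a pair of homogeneous summands of degrees $d_1$ and $d_2$ into the summand of degree $d_1+d_2$, so the product is graded, and the degree-zero summand $\Sym^{[0]}_{\cC}\Omega^p_X(\log D) = \sO_X$ furnishes the unit together with the $\sO_X$-module structure carried by each summand.

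The key tool is the following extension principle. By Corollary~\ref{cor:reflexivity} every summand $\Sym^{[d]}_{\cC}\Omega^p_X(\log D)$ is reflexive, hence torsion free; since the divisor $\supp(D)$ contains no generic point of $X$, such a sheaf admits no nonzero local section supported on $\supp(D)$. Consequently, if $\sF$ is any coherent sheaf, $\sG$ one of these summands, and $\phi,\psi\colon\sF\to\sG$ two morphisms that agree on the dense open set $U := X\setminus\supp(D)$, then the image of $\phi-\psi$ is a subsheaf of $\sG$ supported on $\supp(D)$ and is therefore zero, so $\phi=\psi$. I would apply this repeatedly. Associativity, commutativity, the unit law and $\sO_X$-bilinearity are each an equality between two morphisms whose common source is a finite tensor product of homogeneous summands and whose common target is a single homogeneous summand. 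By the last assertion of Corollary~\ref{cor:sympow}, over $U$ all the maps in sight coincide with the usual multiplication of symmetric differentials, which obeys every algebra axiom; hence each required identity holds over $U$, and the extension principle promotes it to an identity over all of $X$.

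There is essentially no serious obstacle here; the one point that needs care is simply to make sure that, in each axiom, the object one maps \emph{into} is one of the reflexive summands $\Sym^{[d]}_{\cC}\Omega^p_X(\log D)$, so that torsion-freeness is available --- for instance the target $\Sym^{[d_1+d_2+d_3]}_{\cC}\Omega^p_X(\log D)$ of the two bracketings in the associativity diagram. This is exactly Corollary~\ref{cor:reflexivity}, and with it the verification goes through.
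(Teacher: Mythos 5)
Your proof is correct and matches the paper's implicit justification: the corollary is stated there without proof precisely because, as you argue, the multiplication maps of Corollary~\ref{cor:sympow} agree with the usual symmetric multiplication away from $\supp(D)$, and the summands are torsion free by Corollary~\ref{cor:reflexivity}, so every algebra axiom propagates from the dense open complement. One micro-refinement: since $X$ may be singular and the summands are only \emph{reflexive} symmetric powers, carry out the verification on the dense open set $X_{\reg} \setminus \supp(D)$, where they are honest symmetric powers of a locally free sheaf; your extension principle then applies verbatim.
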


\subsubsection{Behavior under subadapted morphisms}

Equation~\eqref{eq:POs} immediately shows that the pull-back of
$\cC$-differentials under subadapted morphisms also become regular logarithmic
differentials.

\begin{cor}[Behaviour under subadapted morphisms]\label{cor:pbsadpt}
  Let $(X,D)$ be a $\cC$-pair and $\gamma : Y \to X$ a subadapted
  morphism. Similar to the setup of Definition~\ref{def:orbifoldDiff}, the
  natural pull-back morphism of differential forms extends to a morphism
  $\gamma^{[*]} \Sym^{[d]} _{\cC} \Omega^p_X(\log D) \to \Sym^{[d]}
  \Omega^p_Y(\log D_\gamma)$. \qed
\end{cor}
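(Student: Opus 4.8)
The plan is to reduce the statement to the codimension-one local computation already carried out in Computation~\ref{comp:obri} (and its extension to arbitrary $p$), and then to glue the result using reflexivity. First I would observe that the ordinary pull-back of symmetric differential forms yields a natural morphism $\gamma^{[*]}\bigl((\Sym^{[d]}\Omega^p_X)(*\lceil D\rceil)\bigr) \to (\Sym^{[d]}\Omega^p_Y)(*\gamma^{-1}(\lceil D\rceil))$, so that a section $\sigma$ of the subsheaf $\Sym^{[d]}_{\cC}\Omega^p_X(\log D)$ pulls back to a rational section $\gamma^{[*]}(\sigma)$ whose only possible poles lie along the components of $\gamma^{-1}(\supp D)$. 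Since the target $\Sym^{[d]}\Omega^p_Y(\log D_\gamma)$ is reflexive by Corollary~\ref{cor:reflexivity} and $\gamma^{[*]}\Sym^{[d]}_{\cC}\Omega^p_X(\log D)$ is reflexive, it suffices to construct the asserted morphism away from a set of codimension at least two in $Y$. Equivalently, it suffices to verify that $\gamma^{[*]}(\sigma)$ has at most logarithmic poles along $D_\gamma$ and no poles along any other prime divisor $E \subset Y$.

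Because $\gamma$ has constant fiber dimension, every prime divisor $E \subset \gamma^{-1}(\supp D)$ dominates some component $D_i \subseteq D$, and I may test the pole order of $\gamma^{[*]}(\sigma)$ at a general point of $E$ in adapted coordinates, writing $\sigma$ locally as a sum of monomials as in Computation~\ref{comp:obri2}. Exactly as there, two cases arise. If $n_i = \infty$, then $E \subseteq D_\gamma$ by Notation~\ref{not:dgamma}, and the fact that logarithmic poles pull back to logarithmic poles (\cite[Cor.~2.12.1]{GKK08}) shows that $\gamma^{[*]}(\sigma)$ has at most logarithmic poles along $E$, as required for a section of $\Sym^{[d]}\Omega^p_Y(\log D_\gamma)$.

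The interesting case is $n_i < \infty$, where $E \not\subseteq D_\gamma$ and I must show that $\gamma^{[*]}(\sigma)$ has no pole at all along $E$. It suffices to treat a single monomial, say of pole order $a$ along $D_i = \{z_1 = 0\}$ with $m_1$ factors of $dz_1$. The subadapted hypothesis of Definition~\ref{def:adaptedmorphism} gives that $E$ appears in $\gamma^*(D_i)$ with some multiplicity $m \geq n_i$, and the computation behind Equation~\eqref{eq:POs} generalizes verbatim, the only change being that the exact multiplicity $n_i$ of the adapted case is replaced by $m$: the pole order of $\gamma^{[*]}(\sigma)$ along $E$ equals $m\cdot a - (m-1)\cdot m_1$. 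Since $\sigma$ is a $\cC$-differential, Corollary and Definition~\ref{cdef:P} gives $P(\sigma,D_i) = n_i a - (n_i-1)m_1 \leq 0$, hence $a \leq \tfrac{n_i-1}{n_i}m_1$, and therefore $m\cdot a - (m-1)\cdot m_1 \leq m\cdot\tfrac{n_i-1}{n_i}m_1 - (m-1)m_1 = m_1\bigl(1 - \tfrac{m}{n_i}\bigr) \leq 0$, because $m \geq n_i$ and $m_1 \geq 0$. Thus $\gamma^{[*]}(\sigma)$ is regular along $E$; in fact, by the analogue of Observation~\ref{obs:Pneg}, it vanishes there.

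The main obstacle I anticipate is not the inequality itself, which is elementary, but the bookkeeping needed to pass from the single-monomial estimate to a genuine sheaf morphism: one must confirm that the pointwise bound is uniform enough to define a morphism of reflexive sheaves rather than merely a constraint on individual rational sections. This is handled by testing the estimate on the explicit local generators of $\Sym^{[d]}_{\cC}\Omega^p_X(\log D)$ recorded in Observation~\ref{obs:lfetc}, and then extending the resulting morphism across the codimension-two locus by reflexivity of the target. A point worth recording along the way is that the finite-multiplicity components of $D$ contribute nothing to $D_\gamma$, which is precisely why the target carries the reduced logarithmic boundary $D_\gamma$ rather than a larger logarithmic sheaf.
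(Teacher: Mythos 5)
Your proof is correct and follows exactly the route the paper intends: the paper marks this corollary as immediate from Equation~\eqref{eq:POs}, i.e., from the observation that for a divisor $E$ appearing in $\gamma^*(D_i)$ with multiplicity $m \geq n_i$ the pole order of $\gamma^{[*]}(\sigma)$ along $E$ is $m\cdot a - (m-1)\cdot m_1 \leq m_1\bigl(1 - \tfrac{m}{n_i}\bigr) \leq 0$, which is precisely your central estimate. Your additional bookkeeping (reduction to codimension one via reflexivity of the target, the $n_i = \infty$ case via \cite[Cor.~2.12.1]{GKK08}, and testing on the local generators of Observation~\ref{obs:lfetc}) just makes explicit what the paper leaves to the reader.
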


\subsubsection{A criterion for $\Sym^{[m]}_{\cC} \Omega^p_X(\log D)|_F$ to be anti-nef}

In Remark~\ref{rem:molitor} we considered the standard conormal sequence of
adapted differentials for a smooth curve $F \subset X$. The following
proposition gives a criterion for $\Sym^{[m]}_{\cC} \Omega^p_X(\log D)|_F$ to
be anti-nef, and will be an essential ingredient in the proof of
Theorem~\ref{thm:main}.

\begin{prop}\label{prop:gennbld}
  Let $F \subset X$ be a smooth curve and assume that the following holds.
  \begin{enumerate}
  \item\ilabel{il:caspar} The pair $\bigl(X, \lceil D \rceil \bigr)$ is snc
    along $F$, and $F$ intersects $\supp(D)$ transversely.
  \item The normal bundle $N_{F/X}$ is nef.
  \item The $\mathbb Q$-divisor $-(K_F+D|_F)$ is nef.
  \end{enumerate}
  If $m \in \mathbb N^+$ is any number and $1 \leq p \leq \dim X$, then
  $\Sym^{[m]}_{\cC} \Omega^p_X(\log D)|_F$ is anti-nef.
\end{prop}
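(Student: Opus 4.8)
The plan is to reduce everything to ordinary logarithmic (``adapted'') differentials on a suitable cyclic cover, where the three hypotheses translate, via the conormal sequence of Remark~\ref{rem:molitor}, into anti-nefness of the relevant sub- and quotient bundles, and then to transport this back down.

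First I would fix the cover. Since $(X,\lceil D\rceil)$ is snc along $F$ by~\iref{il:caspar}, Observation~\ref{obs:lfetc} and Corollary~\ref{cor:reflexivity} show that $\Sym^{[m]}_{\cC}\Omega^p_X(\log D)$ is locally free in a neighborhood of $F$, so its restriction to $F$ is a genuine vector bundle. Using Proposition~\ref{prop:existencesofcovers} I choose a cyclic adapted cover $\gamma\colon Y\to X$ with extra branching along a general $H\in|L|$ as in Notation~\ref{not:adptweb}; generality of $H$ ensures that $H$ meets $F$ transversely and away from $\supp(D)$, so that $(X,\lceil D\rceil\cup H)$ is snc along $F$ and $F$ meets $\supp(D+H)$ transversely. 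Setting $\tilde F:=\gamma^{-1}(F)$, Remark~\ref{rem:molitor} then applies: $\tilde F$ is a smooth curve meeting $D_\gamma\cup H_\gamma$ transversely, the morphism $\gamma|_{\tilde F}\colon\tilde F\to F$ is finite and surjective, and the adapted sheaves on $Y$ are locally free near $\tilde F$. Because nefness, hence anti-nefness, is preserved and detected by finite pullback, it suffices to prove that $\gamma^*\bigl(\Sym^{[m]}_{\cC}\Omega^p_X(\log D)|_F\bigr)$ is anti-nef on $\tilde F$.

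The heart of the argument is the conormal sequence. Remark~\ref{rem:molitor} gives on $\tilde F$ a short exact sequence whose sub-bundle is $\gamma^*(N^*_{F/X})|_{\tilde F}$ and whose quotient is $\gamma^*(\Omega^1_F)\otimes\sO_{\tilde F}(\gamma^*D|_F)$. By hypothesis~(2) the normal bundle $N_{F/X}$ is nef, so the conormal bundle $N^*_{F/X}$ is anti-nef and so is its pullback; the quotient is a line bundle of degree $(\deg\gamma|_{\tilde F})\cdot\deg_F(K_F+D|_F)\le 0$ by hypothesis~(3), hence anti-nef. Since anti-nefness is closed under extensions (dualize and use that an extension of nef bundles is nef), $\Omega^1_Y(\log D_\gamma)_{\rm adpt}|_{\tilde F}$ is anti-nef. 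Exterior and symmetric powers of nef bundles are nef, so dually $\wedge^p$ and $\Sym^m$ preserve anti-nefness, and therefore $\Sym^m\wedge^p\bigl(\Omega^1_Y(\log D_\gamma)_{\rm adpt}|_{\tilde F}\bigr)$ is anti-nef.

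It remains to compare the two sheaves. By the definition of $\cC$-differentials (Definition~\ref{def:orbifoldDiff}) together with the definition of adapted differentials (Definition~\ref{def:adptdiffs}), the reflexive pull-back of a section of $\Sym^{[m]}_{\cC}\Omega^p_X(\log D)$ has at most logarithmic poles along $D_\gamma$ and, away from the branch locus, comes from $X$; this yields an inclusion $\gamma^{[*]}\Sym^{[m]}_{\cC}\Omega^p_X(\log D)\subseteq\Sym^{[m]}\Omega^{[p]}_Y(\log D_\gamma)_{\rm adpt}$ of reflexive sheaves of the same rank, which is an isomorphism away from $\gamma^{-1}(\lceil D\rceil)$ and the branch locus. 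Restricting to $\tilde F$, which by transversality meets this locus in only finitely many points, gives a morphism of bundles on $\tilde F$ that is generically an isomorphism, hence injective, and (on the locally free locus) its target restricts to $\Sym^m\wedge^p\bigl(\Omega^1_Y(\log D_\gamma)_{\rm adpt}|_{\tilde F}\bigr)$. Thus $\gamma^*\bigl(\Sym^{[m]}_{\cC}\Omega^p_X(\log D)|_F\bigr)$ is realized as a full-rank subsheaf of an anti-nef bundle, and a full-rank subsheaf of an anti-nef bundle on a curve is again anti-nef: any saturated sub-line-bundle of it is a sub-line-bundle of the ambient bundle, hence of degree $\le 0$. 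The main obstacle is precisely the bookkeeping in this last comparison, namely verifying that the pull-back of $\cC$-differentials lands inside the symmetric powers of the \emph{adapted} differentials---this is exactly where the fractional positivity encoded by the $\cC$-multiplicities is converted into honest logarithmic poles upstairs---and that the restriction to $\tilde F$ remains injective.
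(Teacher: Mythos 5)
Your proposal is correct and follows essentially the same route as the paper's proof: reduce to the cyclic adapted cover $\gamma$, include $\gamma^*\bigl(\Sym^{[m]}_{\cC}\Omega^p_X(\log D)|_F\bigr)$ into $\Sym^m\bigl(\Omega^p_Y(\log D_\gamma)_{\rm adpt}|_{\tilde F}\bigr)$, and deduce anti-nefness from the conormal sequence of Remark~\ref{rem:molitor}, whose sub- and quotient bundles are anti-nef by hypotheses (2) and (3). The extra details you supply (the degree computation for the quotient line bundle, the extension and full-rank-subsheaf arguments) merely spell out steps the paper compresses into ``subsheaves and tensor powers of anti-nef sheaves are anti-nef.''
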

\begin{proof}
  To start, observe that Condition~\iref{il:caspar} guarantees that
  $\Sym^{[m]}_{\cC} \Omega^p_X(\log D)|_F$ is locally free along $F$. Let $H
  \subset X$ be a general hyperplane, and $\gamma: Y \to X$ be a cyclic
  adapted cover with extra branching along $H$. Let $D_\gamma$ and $H_\gamma$
  be the divisors defined in Notation~\ref{not:dgamma} and
  \ref{not:adptweb}. Further, we consider the curve $\tilde F :=
  \gamma^{-1}(F)$. Observe that $\tilde F$ is smooth, that $Y$ is smooth along
  $\tilde F$, and that $\tilde F$ intersects $D_\gamma \cup H_\gamma$
  transversely.

  Since a sheaf is anti-nef if its pull-back under a finite map is anti-nef,
  it suffices to show that
  $$
  \gamma^*\bigl(\Sym^{[m]}_{\cC} \Omega^p_X(\log D)|_F \bigr) \subseteq
  \Sym^{m} \bigl( \Omega^p_Y(\log D_\gamma)_{\rm adpt}|_{\tilde F} \bigr)
  $$
  is anti-nef. Since subsheaves and tensor powers of anti-nef sheaves are
  anti-nef, it suffices to see that $\Omega^1_Y(\log D_\gamma)_{\rm
    adpt}|_{\tilde F}$ is anti-nef.  For that, recall the generalized conormal
  sequence of Remark~\ref{rem:molitor}, which presents $\Omega^1_Y(\log
  D_\gamma)_{\rm adpt}|_{\tilde F}$ as an extension of two bundles, both of
  which are anti-nef by assumption.
\end{proof}

\section{Sheaves of $\cC$-differentials and their Kodaira-Iitaka dimensions}
\label{sec:SOD}

Following \cite{Cam07} closely, we define a variant of the Kodaira-Iitaka
dimension for sheaves of $\cC$-differentials in Section~\ref{ssec:KIDS}, where
we also generalize the notion of ``special'' to $\cC$-pairs. In
Section~\ref{ssec:defect} we introduce the \emph{defect divisor} of a sheaf,
which helps in the computation of Kodaira-Iitaka dimensions.

Throughout the present Section~\ref{sec:SOD}, we consider a $\cC$-pair $(X,
D)$ as in Definition~\ref{def:orbifold} and let $\sF$ be a reflexive sheaf of
symmetric $\cC$-differentials with inclusion
$$
\iota: \sF \into \Sym^{[d]}_{\cC} \Omega^p_X(\log D).
$$
We assume that $\sF$ is saturated in $\Sym^{[d]}_{\cC} \Omega^p_X(\log D)$,
i.e., that the cokernel of $\iota$ is torsion free.

\subsection{Kodaira-Iitaka dimensions and special $\pmb \cC$-pairs}
\label{ssec:KIDS}

The usual definition of Kodaira-Iitaka dimension considers reflexive tensor
powers of a given reflexive sheaf of rank one. In our setup, where $\sF$ is a
reflexive sheaf of symmetric $\cC$-differentials, we aim to detect the
fractional positivity encoded in the $\cC$-pair by saturating the tensor
product in $\Sym^{[m\cdot d]}_{\cC} \Omega^p_X(\log D)$ before considering
sections. The following notation is useful in the description of the process.

\begin{notation}[\protect{$\cC$-product sheaves, cf.~\cite[Sect.~2.6]{Cam07}}]\label{not:orbifoldsaturation}
  Given a number $m \in \bN^+$, Corollary~\ref{cor:sympow} asserts that there
  exists a non-vanishing inclusion $\iota^m : \Sym^{[m]} \sF \into \Sym^{[m
    \cdot d]}_{\cC} \Omega^p_X(\log D)$.  Let $\Sym^{[m]}_{\cC} \sF$ be the
  saturation of the image, i.e., the kernel of the associated map
  $$
  \Sym^{[m \cdot d]}_{\cC} \Omega^p_X(\log D) \to \factor
  \coker(\iota^m).\textrm{tor}..
  $$
  We call $\Sym^{[m]}_{\cC} \sF$ the \emph{$\cC$-product of $\sF$}. There are
  inclusions
  $$
  \Sym^{[m]} \sF \into \Sym^{[m]}_{\cC} \sF \into \Sym^{[m \cdot d]}_{\cC}
  \Omega^p_X(\log D).
  $$  
\end{notation}

\begin{rem}\label{rem:osilf}
  The $\cC$-product $\Sym^{[m]}_{\cC} \sF$ is a saturated subsheaf of a
  reflexive sheaf and therefore itself reflexive, by
  Proposition~\ref{prop:satisrefl}.  If $\rank \sF = 1$, this implies that the
  restriction of $\Sym^{[m]}_{\cC} \sF$ to the smooth locus of $X$ is locally
  free, \cite[Lem.~1.1.15 on p.~154]{OSS}.
\end{rem}

\begin{defn}[\protect{$\cC$-Kodaira-Iitaka dimension, cf.~\cite[Sect.~2.7]{Cam07}}]\label{def:okodairaiitaka}
  If $X$ is projective and $\rank \sF = 1$, we consider the set
  $$
  M := \left\{ m \in \mathbb N\, \left|\, h^0\bigl(X,\, \Sym^{[m]}_{\cC} \sF
      \bigr) > 0 \right. \right\}.
  $$
  If $M = \emptyset$, we say that the sheaf $\sF$ has $\cC$-Kodaira-Iitaka
  dimension minus infinity, $\kappa_{\cC}(\sF)= - \infty$. Otherwise, by
  Remark~\ref{rem:osilf}, the restriction of $\Sym^{[m]}_{\cC} \sF$ to the
  smooth locus of $X$ is locally free, and we consider the natural rational
  mapping
  $$
  \phi_m : X \dasharrow \mathbb P\left( H^0\bigl(X,\, \Sym^{[m]}_{\cC} \sF
    \bigr)^\vee \right), \text{ for each } m \in M.
  $$
  Define the $\cC$-Kodaira-Iitaka dimension as
  $$
  \kappa_{\cC}(\sF) = \max_{m \in M} \left\{ \dim
    \overline{\phi_m(X)}\right\}.
  $$
\end{defn}

\begin{rem}\label{rem:kcandk}
  If $D=\emptyset$, or if $(X,D)$ is a logarithmic pair, it is clear from the
  construction and from the saturatedness assumption that $\Sym_{\cC}^{[m]}
  \sF \cong \Sym^{[m]} \sF$ for all $m$, and that the $\cC$-Kodaira-Iitaka
  dimension of $\sF$ therefore equals the regular Kodaira-Iitaka dimension,
  $\kappa_{\cC}(\sF) = \kappa(\sF)$.
\end{rem}

\begin{rem}[Invariance of $\kappa_{\cC}$ under $\cC$-products]\label{rem:iokuop}
  Using Remark~\ref{rem:kcandk}, standard arguments show that if $X$ is
  projective, then $\kappa_{\cC}(\sF) = \kappa_{\cC}\bigl( \Sym^{[m]}_{\cC}
  \sF \bigr)$ for all positive $m$.
\end{rem}

\begin{warning}
  Unlike the standard Kodaira-Iitaka dimension, the $\cC$-Kodaira-Iitaka
  dimension is defined only for subsheaves of $\Sym^{[d]}_{\cC}
  \Omega^p_X(\log D)$. Its value is generally not an invariant of the sheaf
  alone and will often depend on the embedding.
\end{warning}

Using the $\cC$-Kodaira-Iitaka dimension instead of the standard definition,
we have the following immediate generalization of
Definition~\ref{def:speciallog}, which agrees with the old definition if
$(X,D)$ is a logarithmic pair.

\begin{defn}[\protect{Special $\cC$-pairs, cf. \cite[Def.~4.18 and Thm.~7.5]{Cam07}}]\label{def:specialingen}
  A $\cC$-pair $(X, D)$ is \emph{special} if $\kappa_{\cC}(\sF)<p$ for any
  number $1 \leq p \leq \dim X$ and any saturated rank-one sheaf $\sF
  \subseteq \Sym^{[1]}_{\cC} \Omega^p_X(\log D)$.
\end{defn}

\subsection{Defect divisors for sheaves of $\pmb \cC$-differentials}
\label{ssec:defect}

If $\rank \sF = 1$, then $\sF|_{X_{\reg}}$ is locally free. If $U_1$ and $U_2
\subseteq X_{\reg}$ are open subsets of the smooth locus and if $\sigma_i \in
\GA \bigl(U_i,\, \sF \bigr)$ are generators of $\sF|_{U_i}$, this implies that
$\sigma_1|_{U_1 \cap U_2}$ and $\sigma_2|_{U_1 \cap U_2}$ differ only by
multiplication with a nowhere-vanishing function. In particular,
Remark~\ref{rem:univpropofdefect} asserts that the defect divisors
$R(\sigma_1)$ and $R(\sigma_2)$ agree on the overlap $U_1 \cap U_2$.  The
following definition therefore makes sense.

\begin{defn}[Defect divisor and $\cC$-divisor class of a sheaf of differentials]\label{def:defect}
  If $\rank \sF = 1$, let $R_{\sF}$ be the unique $\Q$-Weil divisor on $X$
  such that for any open set $U \subseteq X_{\reg}$, and any generator $\sigma
  \in \Gamma \bigl(U,\, \sF \bigr)$, we have $R_{\sF} \cap U = R(\sigma)$. We
  call $R_{\sF}$ the \emph{defect divisor} of the sheaf $\sF$.

  Recall that there exists, up to linear equivalence, a unique Weil divisor
  $W$ such that $\sF = \sO_X(W)$. Let $\Div(\sF) \in \Cl(X)$ be the associated
  element of the divisor class group. If $X$ is $\Q$-factorial, we define the
  \emph{$\cC$-divisor class of the sheaf $\sF$}, written $\Div_{\cC}(\sF)$, as
  the $\Q$-linear equivalence class given by $\Div_{\cC}(\sF) := \Div(\sF) +
  R_{\sF}$.
\end{defn}

\begin{rem}[Pull-back of defect divisor under adapted morphisms]\label{rem:pbddbi}
  In the setup of Definition~\ref{def:defect}, if $U \subseteq X$ is any open
  set and $\gamma: V \to U$ any adapted morphism, it is clear from the
  definition that $\gamma^*(R_{\sF})$ is an integral Weil divisor on $V$.
\end{rem}

\begin{rem}[Characterization of the defect divisor]\label{rem:charDefect}
  In the setup of Definition~\ref{def:defect}, if $U \subseteq X$ is any open
  set and $\gamma: V \to U$ is any adapted morphism,
  Definition~\ref{def:orbifoldDiff} of $\cC$-differentials asserts that there
  exists an inclusion
  $$
  i : \gamma^{[*]}(\sF) \into \Sym^{[d]} \Omega^p_V ( \log D_{\gamma} )
  $$
  which factors into a sequence of inclusions,
  \begin{equation}\label{eq:factincl}
    \xymatrix{ \gamma^{[*]}(\sF) \ar[r] & \left( \gamma^*(\sF)
        \otimes \sO_V \bigl( \gamma^* R_{\sF} \bigr) \right)^{**}
      \ar[r]^(0.54){j} & \Sym^{[d]} \Omega^p_V ( \log D_{\gamma} ), }
  \end{equation}
  where the cokernel of $j$ is torsion free in codimension one. The defect
  divisor $R_{\sF}$ is uniquely determined by this property.
\end{rem}

We next show that the defect divisor behaves nicely under $\cC$-products.

\begin{prop}[Behaviour under $\cC$-products]\label{prop:prod}
  In the setup of Definition~\ref{def:defect}, if $m \in \bN^+$ is any number,
  we have
  \begin{align}
    \label{eq:Rmul1} \Sym^{[m]}_{\cC} \sF & = \left( \sF^{[m]} \otimes
      \sO_X\bigl( \lfloor m \cdot R_{\sF} \rfloor \bigr) \right)^{**} \\
   \label{eq:Rmul2} R_{\Sym^{[m]}_{\cC} \sF} & = \underbrace{m \cdot
     R_{\sF} - \lfloor m \cdot R_{\sF} \rfloor}_{=: Q}
  \end{align}
\end{prop}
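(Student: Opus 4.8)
The plan is to verify both identities at the generic points of the components $D_i$ with finite $\cC$-multiplicity, which suffices because all sheaves involved are reflexive (the left-hand side of \eqref{eq:Rmul1} by Remark~\ref{rem:osilf}, the right-hand side as a double dual) and because both sides agree with $\sF^{[m]}$ away from $\supp(D)$, where $R_{\sF}$ vanishes and the saturation in Notation~\ref{not:orbifoldsaturation} is trivial. I would first establish \eqref{eq:Rmul1} and then read off \eqref{eq:Rmul2} by computing the defect divisor of the resulting sheaf from an explicit local generator. Throughout, I work near a generic point $x$ of a fixed component $D_i$ with $n_i < \infty$, in adapted coordinates $z_1, \ldots, z_n$ with $D_i = \{z_1 = 0\}$, and I let $\sigma$ be a local generator of the rank-one sheaf $\sF$, so that by Definition~\ref{def:defect} the coefficient of $D_i$ in $R_{\sF}$ is $r_i := -P(\sigma, D_i)/n_i$.

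The technical heart is the additivity relation $P(\sigma^m, D_i) = m \cdot P(\sigma, D_i)$. To prove it I would pass to an adapted morphism $\gamma : V \to U$ and a divisor $E \subset V$ dominating $D_i$. Since reflexive pull-back commutes with reflexive symmetric powers in codimension one, the local generator $\sigma^m$ of $\Sym^{[m]}\sF = \sF^{[m]}$ satisfies $\gamma^{[*]}(\sigma^m) = (\gamma^{[*]}\sigma)^{m}$ up to a unit, and Observation~\ref{obs:Pneg} identifies the vanishing order of $\gamma^{[*]}(\sigma)$ along $E$ as $-P(\sigma, D_i)$. As vanishing orders multiply under powers, $\gamma^{[*]}(\sigma^m)$ vanishes to order $-m \cdot P(\sigma, D_i)$ along $E$, and applying Observation~\ref{obs:Pneg} once more gives the claim.

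With additivity in hand I compute the saturation defining $\Sym^{[m]}_{\cC}\sF$. Near $x$ this amounts to saturating the subsheaf generated by $\sigma^m$ inside $\Sym^{[m d]}_{\cC}\Omega^p_X(\log D)$, which at the generic point of $D_i$ — a discrete valuation ring with uniformizer $z_1$ — is obtained by dividing by the largest power $z_1^{v}$ that keeps the section a $\cC$-differential. By Corollary and Definition~\ref{cdef:P} together with additivity, $z_1^{-v}\sigma^m$ lies in $\Sym^{[md]}_{\cC}\Omega^p_X(\log D)$ exactly when $P(z_1^{-v}\sigma^m, D_i) = m\,P(\sigma, D_i) + n_i v \leq 0$, i.e.\ when $v \leq m r_i$; the maximal integer is $v = \lfloor m r_i \rfloor$. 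Collecting this over all components with finite multiplicity and invoking reflexivity yields \eqref{eq:Rmul1}, since $\sum_i \lfloor m r_i\rfloor D_i = \lfloor m R_{\sF}\rfloor$.

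Finally, for \eqref{eq:Rmul2} I read off the defect divisor of $\Sym^{[m]}_{\cC}\sF$ from its local generator $\tau := z_1^{-\lfloor m r_i\rfloor}\sigma^m$. Additivity gives $P(\tau, D_i) = m\,P(\sigma, D_i) + n_i \lfloor m r_i\rfloor = -n_i(m r_i - \lfloor m r_i\rfloor)$, so the coefficient of $D_i$ in $R_{\Sym^{[m]}_{\cC}\sF}$ equals $-P(\tau, D_i)/n_i = m r_i - \lfloor m r_i\rfloor$, which is precisely the coefficient of $D_i$ in $Q = m R_{\sF} - \lfloor m R_{\sF}\rfloor$. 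I expect the additivity of $P$ under reflexive powers to be the only genuinely delicate point, as it relies on reflexive pull-back commuting with symmetric powers in codimension one and on the vanishing-order bookkeeping of Observation~\ref{obs:Pneg} being compatible with products; the remaining steps are formal consequences of saturation in a discrete valuation ring and the reflexivity of all sheaves in question.
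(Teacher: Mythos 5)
Your local computation at the components of finite $\cC$-multiplicity is correct --- the identity $P(z_1^{-v}\sigma^m, D_i) = m\,P(\sigma,D_i) + n_i v$, the maximal twist $v = \lfloor m r_i\rfloor$, and the resulting coefficient $m r_i - \lfloor m r_i\rfloor$ for \eqref{eq:Rmul2} all check out --- but your reduction to codimension one has a genuine hole: it omits the components $D_i \subseteq \lfloor D \rfloor$ with $n_i = \infty$. Their generic points are codimension-one, so reflexivity does not allow you to skip them, and they lie inside $\supp(D)$, so your ``away from $\supp(D)$'' clause does not cover them either. Equation \eqref{eq:Rmul1} makes a nontrivial assertion there: $R_{\sF}$ has no component along $\lfloor D \rfloor$ by definition, so you must show that the saturation gains nothing, i.e.\ that $z_1^{-1}\sigma^m$ fails the logarithmic condition $a \leq m_1$ of Computation~\ref{comp:obri2} at the generic point of such a $D_i$ --- note that your pole-order function $P(\cdot, D_i)$ is not even defined in this case. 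The verification is in the same spirit as your finite-multiplicity computation: saturatedness of $\sF$ forces $\min_\alpha \bigl(m_1^\alpha - a_\alpha\bigr) = 0$ among the monomials of $\sigma$, and this minimum is preserved under taking $m$-th powers. But it is a case your proof never treats, and its omission leaves \eqref{eq:Rmul1} unproved along a divisorial locus.

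The ``preserved under powers'' just invoked is exactly the additivity you flag as the one delicate point, and it deserves an actual argument, since for sections of higher-rank sheaves the cross-terms arising when the several monomials of $\sigma$ are multiplied could a priori cancel in $\sigma^m$. The rescue is that at the generic point of $E$ (a discrete valuation ring) the graded algebra $\bigoplus_k \Sym^k\Omega^p_V(\log D_\gamma)$ reduces, modulo the maximal ideal, to a polynomial ring over the residue field, hence a domain; therefore $\operatorname{ord}_E$ is a Gauss valuation, additive on products, and $\operatorname{ord}_E\bigl((\gamma^{[*]}\sigma)^m\bigr) = m \cdot \operatorname{ord}_E\bigl(\gamma^{[*]}\sigma\bigr)$. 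The same fact also underwrites your unproved claim that the saturation of Notation~\ref{not:orbifoldsaturation} is trivial away from $\supp(D)$. With these points supplied, your argument goes through, and it is genuinely different from the paper's: the paper never computes at generic points, but instead pulls everything back under an adapted cover, observes that the inclusion $j$ of Remark~\ref{rem:charDefect} and its symmetric power $j^m$ in \eqref{eq:imequ} have locally free cokernel (this is where the same fiberwise-domain fact enters, equally silently there), and then kills the correction divisor $P$ via $\gamma^*(P) \leq \gamma^*(Q)$ and $\lfloor Q \rfloor = 0$ --- a mechanism that automatically handles all components, including those of infinite multiplicity, in one stroke. Your DVR computation is more elementary and makes the floor in \eqref{eq:Rmul1} visibly inevitable, but precisely because it proceeds component by component it must enumerate every type of component, which is where it currently falls short.
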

\begin{proof}
  Let $U \subseteq X$ be any open set, and $\gamma: V \to U$ any finite
  adapted morphism. Then there exist open sets $U^\circ \subseteq U \cap
  X_{\reg}$ and $V^\circ \subseteq \gamma^{-1}(U^\circ)$ with $\codim_U
  U\setminus U^\circ = \codim_V V\setminus V^\circ \geq 2$ such that both the
  sheaf $\Omega^p_V(\log D_{\gamma})$ and the cokernel of the injection
  $$
  j: \gamma^*(\sF) \otimes \sO_{V^\circ}\bigl(\gamma^* R_{\sF} \bigr) \into
  \Sym^d \Omega^p_V(\log D_{\gamma})
  $$
  are locally free on $V^\circ$.  Taking $m^{\rm{th}}$ symmetric products, the
  inclusion $j$ yields an inclusion of sheaves on $V^\circ$,
  \begin{equation}\label{eq:imequ}
    j^m: \underbrace{\Sym^m \left( \gamma^*(\sF) \otimes
      \sO_{V^\circ}\bigl(\gamma^* R_{\sF} \bigr) \right)}_{=:\, \sA} \into
    \Sym^{m\cdot d} \Omega^p_V(\log D_{\gamma}),
  \end{equation}
  with locally free cokernel. On $V^\circ$ and $U^\circ$, respectively, the
  domain $\sA$ of the map $j^m$ can then be written as follows.
  \begin{equation}\label{eq:inclll}
    \begin{split}
      \sA & \cong \Sym^m \left( \gamma^*(\sF) \right) \otimes \sO_{V^\circ}\bigl(m \cdot \gamma^* R_{\sF} \bigr) \\
      & \cong \gamma^* \bigl( \sF^m \bigr) \otimes \sO_{V^\circ}\bigl( \gamma^* (m \cdot R_{\sF}) \bigr) \\
      & \cong \gamma^* \bigl( \sF^m \otimes \sO_{U^\circ}(\lfloor m \cdot R_{\sF} \rfloor) \bigr) \otimes \sO_{V^\circ}\bigl( \gamma^* Q \bigr)
  \end{split}
  \end{equation}
  where $Q$ is the $\bQ$-divisor defined in~\eqref{eq:Rmul2} above. Since $Q$
  is effective, Inclusion~\eqref{eq:imequ} gives an inclusion of locally free
  sheaves on $U^\circ$,
  $$
  \sF^m \otimes \sO_{U^\circ}(\lfloor m \cdot R_{\sF} \rfloor) \subseteq
  \Sym^{[m]}_{\cC} \sF.
  $$
  In particular, there exists an effective Cartier divisor $P$ such that
  $$
  \sF^m \otimes \sO_{U^\circ}(\lfloor m \cdot R_{\sF} \rfloor) \otimes
  \sO_{U^\circ}(P) = \Sym^{[m]}_{\cC} \sF.
  $$
  Thus
  $$
  \gamma^* \bigl(\sF^m \otimes \sO_{U^\circ}(\lfloor m \cdot R_{\sF} \rfloor)
  \otimes \sO_{U^\circ}(P) \bigr) \subseteq \Sym^{m\cdot d} \Omega^p_V(\log
  D_{\gamma}).
  $$
  But since the cokernel of $j^m$ is locally free, Equation~\eqref{eq:inclll}
  implies that $\gamma^*(P) \leq \gamma^*(Q)$. Since $\lfloor Q \rfloor = 0$,
  this is possible if and only if $P = 0$. This shows
  Assertion~\eqref{eq:Rmul1}.  Assertion~\eqref{eq:Rmul2} then follows from
  the characterization of the defect divisor given in
  Remark~\ref{rem:charDefect}, Equation~\eqref{eq:inclll} and again from the
  fact that the cokernel of $j^m$ is locally free.
\end{proof}

As an immediate corollary, we can relate the $\cC$-Kodaira-Iitaka dimension of
a rank one subsheaf of $\Sym^{[d]}_{\cC} \Omega^p_X(\log D)$ to the standard
Kodaira-Iitaka dimension.

\begin{cor}\label{cor:realizationofkappa}
  In the setup of Definition~\ref{def:defect}, if $m \in \bN^+$ is any number,
  and $\gamma: Y \to X$ any adapted morphism, then there exists a
  sequence of inclusions as follows:
  $$
  \xymatrix{ \gamma^{[*]} \bigl( \Sym^{[m]}_{\cC} \sF \bigr) \ar[r] &
    \Sym^{[m]}\left( \gamma^*(\sF) \otimes \sO_Y \bigl( \gamma^* R_{\sF}
      \bigr) \right) \ar[r] & \Sym^{[m\cdot d]} \Omega^p_Y ( \log D_{\gamma}
    ).}
  $$
  If $X$ is projective and if $\gamma$ is proper, then $\kappa_{\cC}(\sF)
  \leq \kappa \left( \bigl( \gamma^*(\sF) \otimes \sO_Y ( \gamma^* R_{\sF})
    \bigr)^{**} \right)$.
\end{cor}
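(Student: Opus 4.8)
The plan is to establish the two inclusions first and then read off the dimension bound by pulling back sections and comparing linear systems. For the first arrow I would start from Proposition~\ref{prop:prod}: Equation~\eqref{eq:Rmul1} identifies $\Sym^{[m]}_{\cC}\sF$ with $\bigl(\sF^{[m]}\otimes\sO_X(\lfloor m\cdot R_{\sF}\rfloor)\bigr)^{**}$, and~\eqref{eq:Rmul2} records that $Q := m\cdot R_{\sF}-\lfloor m\cdot R_{\sF}\rfloor$ is effective. Working over the open set $Y^\circ\subseteq Y$ obtained by deleting a closed set of codimension $\geq 2$ on which every sheaf in sight is locally free, reflexive pull-back commutes with the tensor operations, so $\gamma^{[*]}\Sym^{[m]}_{\cC}\sF$ becomes $(\gamma^*\sF)^{[m]}\otimes\sO_Y(\gamma^*\lfloor m\cdot R_{\sF}\rfloor)$. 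Since $\gamma$ is surjective, $\gamma^*Q = m\cdot\gamma^*R_{\sF}-\gamma^*\lfloor m\cdot R_{\sF}\rfloor$ is again effective, and twisting by $\gamma^*Q$ produces an inclusion into $(\gamma^*\sF)^{[m]}\otimes\sO_Y(m\cdot\gamma^*R_{\sF})$, which for a rank-one sheaf is exactly $\Sym^{[m]}\bigl(\gamma^*\sF\otimes\sO_Y(\gamma^*R_{\sF})\bigr)$. Both sides being reflexive of rank one, the inclusion extends uniquely across the codimension $\geq 2$ complement.

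For the second arrow I would take the factorization~\eqref{eq:factincl} of Remark~\ref{rem:charDefect}, which supplies an inclusion $j : \bigl(\gamma^*\sF\otimes\sO_Y(\gamma^*R_{\sF})\bigr)^{**}\into\Sym^{[d]}\Omega^p_Y(\log D_{\gamma})$ of rank-one reflexive sheaves, apply $\Sym^{[m]}$, and compose with the natural multiplication morphism $\Sym^{[m]}\bigl(\Sym^{[d]}\Omega^p_Y(\log D_{\gamma})\bigr)\to\Sym^{[m\cdot d]}\Omega^p_Y(\log D_{\gamma})$. Injectivity of the composite is checked on a locally free locus: a local generator $\omega$ of the image of $j$ is a nonzero element of the symmetric algebra of $\Omega^p_Y(\log D_{\gamma})$, and its $m$-th power is nonzero because that algebra is an integral domain, so the induced map of line bundles is nonzero, hence injective on the torsion-free sheaf $\Sym^{[m]}\bigl(\gamma^*\sF\otimes\sO_Y(\gamma^*R_{\sF})\bigr)$.

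Writing $\sL := \bigl(\gamma^*\sF\otimes\sO_Y(\gamma^*R_{\sF})\bigr)^{**}$, the first arrow reads $\gamma^{[*]}\Sym^{[m]}_{\cC}\sF\into\sL^{[m]}$, which is all that is needed for the dimension bound. For each $m$ in the set $M$ of Definition~\ref{def:okodairaiitaka}, pulling back sections along the dominant morphism $\gamma$ gives an injection $H^0\bigl(X,\Sym^{[m]}_{\cC}\sF\bigr)\into H^0\bigl(Y,\sL^{[m]}\bigr)$ whose image is a linear subsystem $V_m$. The rational map attached to $V_m$ is precisely $\phi_m\circ\gamma$, and it factors the full Kodaira map $\psi_m : Y\dasharrow\P\bigl(H^0(Y,\sL^{[m]})^\vee\bigr)$ through the linear projection dual to $V_m\into H^0(Y,\sL^{[m]})$. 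A linear projection cannot increase the dimension of the image, so $\dim\overline{(\phi_m\circ\gamma)(Y)}\leq\dim\overline{\psi_m(Y)}\leq\kappa(\sL)$, while surjectivity of $\gamma$ gives $\dim\overline{\phi_m(X)}=\dim\overline{(\phi_m\circ\gamma)(Y)}$. Taking the maximum over $m\in M$ yields $\kappa_{\cC}(\sF)\leq\kappa(\sL)$.

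The codimension-two extension and the identification of the reflexive operations on the locally free locus are routine. The step I expect to be the real point is the linear-systems comparison of the last paragraph: one must verify that the subsystem $V_m$ pulled back from $X$ reproduces $\phi_m\circ\gamma$ and that enlarging it to the complete system defining $\kappa(\sL)$ can only increase the image dimension. One must also keep in mind that $\sO_Y(\gamma^*R_{\sF})$ is a genuine rank-one reflexive sheaf only because $\gamma^*R_{\sF}$ is integral, which is the content of Remark~\ref{rem:pbddbi} and the precise place where adaptedness of $\gamma$ is used.
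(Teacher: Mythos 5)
Your proposal is correct and follows essentially the same route as the paper: the paper's proof simply substitutes Equations~\eqref{eq:Rmul1} and~\eqref{eq:Rmul2} of Proposition~\ref{prop:prod} into the factorization~\eqref{eq:factincl} of Remark~\ref{rem:charDefect} (applied to $\Sym^{[m]}_{\cC}\sF$ rather than to $\sF$ followed by symmetrization, a cosmetic difference), and declares the $\kappa$-inequality immediate from the first inclusion and the definition of $\kappa_{\cC}$. Your write-up merely makes explicit the same ingredients --- effectivity and integrality of $\gamma^*Q$, the codimension-two extension, and the linear-systems comparison behind the dimension bound --- so there is nothing genuinely different to compare.
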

\begin{proof}
  Substitute Equations~\eqref{eq:Rmul1} and \eqref{eq:Rmul2} of
  Proposition~\ref{prop:prod} into the Sequence~\eqref{eq:factincl} to obtain
  the sequence of inclusions.  The inequality of Kodaira-Iitaka dimensions
  follows immediately from the definition of $\kappa_{\cC}$ and from the first
  inclusion.
\end{proof}

The following fact is another immediate consequence of
Proposition~\ref{prop:prod} and of Remark~\ref{rem:iokuop}.  

\begin{cor} \label{cor:kappaofample}
  If $X$ is projective, and if $m \in \bN^+$ is any number such that $m\cdot
  R_{\sF}$ is an integral divisor, then $\kappa_{\cC}(\sF) = \kappa \bigl(
  m\cdot \Div_{\cC}(\sF) \bigr)$. \qed
\end{cor}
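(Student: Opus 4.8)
The plan is to reduce everything to the case of a vanishing defect divisor, in which the $\cC$-Kodaira-Iitaka dimension collapses to the ordinary one. First I would invoke Remark~\ref{rem:iokuop}, which gives $\kappa_{\cC}(\sF) = \kappa_{\cC}\bigl( \Sym^{[m]}_{\cC} \sF \bigr)$ for the chosen $m$. The whole point of passing to the $m$-th $\cC$-product is that $m$ was selected so that $m \cdot R_{\sF}$ is integral, whence Equation~\eqref{eq:Rmul2} of Proposition~\ref{prop:prod} yields
\[
  R_{\Sym^{[m]}_{\cC} \sF} = m \cdot R_{\sF} - \lfloor m \cdot R_{\sF} \rfloor = 0.
\]
Thus the $\cC$-product sheaf $\sG := \Sym^{[m]}_{\cC} \sF$ carries no fractional positivity along the boundary.

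Next I would show $\kappa_{\cC}(\sG) = \kappa(\sG)$. By Remark~\ref{rem:osilf} the sheaf $\sG$ is a saturated rank-one reflexive subsheaf of $\Sym^{[m \cdot d]}_{\cC} \Omega^p_X(\log D)$, so the constructions of Section~\ref{ssec:KIDS} apply to it verbatim. Applying Proposition~\ref{prop:prod} now to $\sG$ and using $R_{\sG} = 0$ gives $\Sym^{[k]}_{\cC} \sG = \bigl( \sG^{[k]} \otimes \sO_X(\lfloor k \cdot R_{\sG} \rfloor) \bigr)^{**} = \Sym^{[k]} \sG$ for every $k$; that is, the $\cC$-products of $\sG$ coincide with the ordinary reflexive symmetric products. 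Comparing Definition~\ref{def:okodairaiitaka} with the usual definition of the Kodaira-Iitaka dimension — exactly as in Remark~\ref{rem:kcandk} — this forces $\kappa_{\cC}(\sG) = \kappa(\sG)$.

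Finally I would identify $\sG$ with the divisor class $m \cdot \Div_{\cC}(\sF)$. Since $m \cdot R_{\sF}$ is integral, Equation~\eqref{eq:Rmul1} reads $\sG = \bigl( \sF^{[m]} \otimes \sO_X(m \cdot R_{\sF}) \bigr)^{**}$, which is precisely the reflexive sheaf associated with the integral Weil divisor $m \cdot \Div(\sF) + m \cdot R_{\sF} = m \cdot \Div_{\cC}(\sF)$. As the Kodaira-Iitaka dimension of a reflexive rank-one sheaf equals that of its associated Weil divisor class, we get $\kappa(\sG) = \kappa\bigl( m \cdot \Div_{\cC}(\sF) \bigr)$. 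Chaining the three equalities then yields the claim.

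I expect no serious obstacle here: the argument is essentially bookkeeping layered on top of Proposition~\ref{prop:prod}, which is why the statement is phrased as an immediate corollary. The one point that deserves genuine care is the second step, where one must verify that the reasoning of Remark~\ref{rem:kcandk} — stated there only for $D = \emptyset$ or for logarithmic pairs — still applies once $R_{\sG}$ vanishes, since it is precisely the \emph{vanishing of the defect divisor}, and not the triviality of $D$ itself, that forces the $\cC$-products of $\sG$ to coincide with its ordinary reflexive symmetric products.
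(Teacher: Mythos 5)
Your proposal is correct and matches the paper's intended argument exactly: the paper states Corollary~\ref{cor:kappaofample} without proof as an ``immediate consequence of Proposition~\ref{prop:prod} and of Remark~\ref{rem:iokuop},'' and your three steps — invariance under $\cC$-products, vanishing of $R_{\Sym^{[m]}_{\cC}\sF}$ via Equation~\eqref{eq:Rmul2}, and identification of $\Sym^{[m]}_{\cC}\sF$ with $\sO_X\bigl(m\cdot\Div_{\cC}(\sF)\bigr)$ via Equation~\eqref{eq:Rmul1} — are precisely the bookkeeping that justification presupposes. Your closing observation is also well taken: applying Proposition~\ref{prop:prod} directly to $\sG$ with $R_{\sG}=0$ is the right way to bypass the restricted hypotheses of Remark~\ref{rem:kcandk}.
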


\section{The $\cC$-pair associated with a fibration}
\label{sec:ofibr}

If $(Y, D)$ is a logarithmic pair, and $\pi : Y \to Z$ a fibration, we aim to
describe the maximal divisor $\Delta$ on $Z$ such that $\cC$-differentials of
the pair $(Z, \Delta)$ pull back to logarithmic differentials on $(Y,
D)$. Once $\Delta$ is found, we will see in Proposition~\ref{prop:dfhg} that
any section in $\Sym^{[m]} \Omega_Y^p(\log D)$ which generically comes from
$Z$ is really the pull-back of a globally defined $\cC$-differential from
downstairs. The construction of $\Delta$ is originally found in slightly
higher generality in \cite[Sec.~3.1]{Cam07}, where the $\cC$-pair $(Z,
\Delta)$ is called the \emph{base orbifolde} of the fibration. This section
contains a short review of the construction, as well as detailed and
self-contained proofs of all results required later.

In order to keep the technical apparatus reasonably small, we restrict
ourselves to logarithmic pairs in this section, which is the case we need to
handle in the proof of Theorem~\ref{thm:main}. The definitions and results of
this section can be generalized in a straightforward manner to the case of
arbitrary $\cC$-pairs.

\subsection{Definition of the $\pmb \cC$-base}

The following setup is maintained throughout the present
Section~\ref{sec:ofibr}.

\begin{setup}\label{setup:fibr}
  Let $(Y, D)$ be a logarithmic pair, and $\pi : Y \to Z$ a proper, surjective
  morphism with connected fibers to a normal space.
\end{setup}

\begin{notation}[Log discriminant locus]\label{not:logdiscr}
  The \emph{log discriminant locus} $S \subset Z$ is the smallest closed set
  $S$ such that $\pi$ is smooth away from $S$, and such that for any point $z
  \in Z \setminus S$, the fiber $Y_z := \pi^{-1}(z)$ is not contained in $D$,
  and the scheme-theoretic intersection $Y_z \cap D$ is an snc divisor in
  $Y_z$. We decompose
  $$
  S = S_{\div} \cup S_{\sml},
  $$
  where $S_{\div}$ is a divisor, and $\codim_Z S_{\sml} \geq 2$. The divisor
  $S_{\div}$ is always understood to be reduced.
\end{notation}

\begin{construction-defn}[\protect{$\cC$-base of the fibration, cf.~\cite[Def 3.2]{Cam07}}]\label{cons:cdefn}
  Let $S_{\div} = \bigcup_i \Delta_i$ be the decomposition into irreducible
  components. We aim to attach multiplicities $a_i \in \mathbb Q^{\geq 0}$ to
  the components $\Delta_i$, in order to define a $\cC$-divisor $\Delta :=
  \sum_i a_i\cdot \Delta_i$.

  To this end, let $Z^\circ \subseteq Z$ be the maximal open subset such that
  $\pi$ is equidimensional over $Z^\circ$. Set $Y^\circ := \pi^{-1}(Z^\circ)$,
  and observe that all components $\Delta_i$ intersect $Z^\circ$
  non-trivially. In particular, none of the divisors $\Delta_i^\circ :=
  \Delta_i \cap Z^\circ$ is empty. Given one component $\Delta_i$, the
  preimage $\pi^{-1}(\Delta_i^\circ)$ has support of pure codimension one in
  $Y^\circ$, with decomposition into irreducible components
  $$
  \supp \bigl( \pi^{-1}(\Delta_i^\circ) \bigl) = \bigcup_j E_{i,j}^\circ.
  $$
  If for the given index $i$, all $E_{i,j}^\circ$ are contained in $D$, set
  $a_i := 1$. Otherwise, set
  $$
  b_i := \min \{ \text{multiplicity of } E_{i,j}^\circ \text{ in }
  \pi^{-1}(\Delta_i^\circ) \,|\, E_{i,j}^\circ \not \subset D\} \quad
  \text{and} \quad a_i := \frac{b_i-1}{b_i}.
  $$
  We obtain a divisor $\Delta := \sum_i a_i \cdot \Delta_i$ with $\supp(\Delta)
  \subseteq S_{\div}$. We call the $\cC$-pair $(Z, \Delta)$ the
  \emph{$\cC$-base of the fibration $\pi$}.
\end{construction-defn}

The notion of the $\cC$-base of a fibration is not very useful unless the
fibration and the spaces have further properties, cf.~Remark~\ref{srem:pbcd}
below. We will therefore maintain the following assumptions throughout the
remainder of the current Section~\ref{sec:ofibr}.

\begin{assumption}\label{ass:fibration}
  In Setup~\ref{setup:fibr}, assume additionally that the following holds.
  \begin{enumerate}
  \item The pair $(Y, D)$ is snc. In particular, $Y$ is smooth.
  \item The pair $(Z, S_{\div})$ is snc. In particular, $Z$ is smooth.
  \item Every irreducible divisor $E \subset Y$ with $\codim_Z \pi(E) \geq 2$
    is contained in $D$.
  \end{enumerate}
\end{assumption}

\subsection{The pull-back map for $\pmb \cC$-differentials and sheaves}\label{ssec:pbbf}

Assumptions~\ref{ass:fibration} guarantee that $\cC$-differentials on $(Z,
\Delta)$ can be pulled back to logarithmic differentials on $(Y, D)$. In fact,
a slightly stronger statement holds.

\begin{prop}\label{prop:spcdwn}
  Under the Assumptions~\ref{ass:fibration}, decompose the divisor $D = D^h
  \cup D^v$ into the ``horizontal'' components $D^h$ that dominate $Z$ and the
  ``vertical'' components $D^v$ that do not. With $\Delta$ as in
  Construction~\ref{cons:cdefn}, the pull-back morphism of differentials
  extends to a map\footnote{Since $\Omega_Y^p(\log D^v)$ is locally free, we
    could write $\Sym^m \Omega_Y^p(\log D^v)$ instead of the more complicated
    $\Sym^{[m]} \Omega_Y^p(\log D^v)$. We have chosen to keep the square
    brackets throughout in order to be consistent with the notation used in
    the remainder of this paper.}
  \begin{equation}\label{eq:pbob2}
    d\pi^m : \pi^{[*]} \bigl( \Sym^{[m]}_{\cC} \Omega_Z^p(\log \Delta)\bigr) \to
    \Sym^{[m]} \Omega_Y^p(\log D^v)
  \end{equation}
  for all numbers $m$ and $p$.
\end{prop}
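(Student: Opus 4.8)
The plan is to construct $d\pi^m$ as the naive pull-back of rational symmetric differential forms, and then to verify that, applied to any local generator of the source sheaf, it produces a form whose poles are at most logarithmic along $D^v$ and which is holomorphic elsewhere. First I would record that both sheaves in~\eqref{eq:pbob2} are locally free on all of $Y$: the target is locally free because $Y$ is smooth and $D^v$ is a reduced snc divisor, while the source is locally free because $\Sym^{[m]}_{\cC}\Omega^p_Z(\log\Delta)$ is locally free by Corollary~\ref{cor:reflexivity} (the pair $(Z,\lceil\Delta\rceil)$ is snc by Assumption~\ref{ass:fibration}), so that its ordinary pull-back is already locally free and agrees with the reflexive pull-back $\pi^{[*]}$. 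Consequently $d\pi^m$ is a priori a morphism of rational forms, and since both sheaves are locally free it becomes a genuine sheaf morphism as soon as it has no poles, a condition that may be tested at the generic point of each prime divisor $E \subset Y$, where I let $\sigma$ run through the local generators of Observation~\ref{obs:lfetc}. I will carry out the verification for $p=1$; the case $p\ge 2$ is identical once those generators are written down, and is left to the reader as elsewhere in this section.

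The two easy cases are the horizontal and the deeply vertical divisors. If $E$ dominates $Z$, its generic point lies over the generic point of $Z$, which avoids $\supp\Delta\subseteq S_{\div}$; there $\sigma$ is holomorphic, hence so is $\pi^*\sigma$, and no pole appears along $E$. If $E$ is vertical with $\codim_Z\pi(E)\ge 2$, then Assumption~\ref{ass:fibration} forces $E\subseteq D=D^v$, so a logarithmic pole along $E$ is permitted; and by Corollary~\ref{cor:inclusion} the form $\sigma$ is a section of the locally free sheaf $\Sym^m\Omega^1_Z(\log\lceil\Delta\rceil)$, whose pull-back has at most logarithmic poles along $\pi^{-1}(\lceil\Delta\rceil)_{\red}$. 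Since $E\subseteq D^v$, this is exactly what the target allows.

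The heart of the matter is a vertical divisor $E$ lying over a component $\Delta_i$ of $S_{\div}$. Choosing adapted coordinates with $\Delta_i=\{w_1=0\}$ and $E=\{y_1=0\}$, the morphism is given near the generic point of $E$ by $\pi^*w_1=y_1^{c}\cdot(\text{unit})$, with holomorphic $\pi^*w_j$ for $j\ge 2$, where $c$ is the multiplicity of $E$ in $\pi^{-1}(\Delta_i)$. The relevant generator of Observation~\ref{obs:lfetc} is
\[
\sigma \;=\; \frac{1}{w_1^{\lfloor \mu_1 a_i\rfloor}}\,(dw_1)^{\mu_1}(dw_2)^{\mu_2}\cdots (dw_s)^{\mu_s},
\qquad \sum_{j} \mu_j=m,\quad a_i=\tfrac{b_i-1}{b_i}.
\]
Because $\pi^*dw_1$ vanishes to order $c-1$ along $E$ while $w_1^{-\lfloor \mu_1 a_i\rfloor}$ pulls back to a pole of order $c\lfloor \mu_1 a_i\rfloor$, the form $\pi^*\sigma$ has pole order $c\lfloor \mu_1 a_i\rfloor-\mu_1(c-1)$ along $E$, attained on the summand carrying the full $\mu_1$ factors $(dy_1)^{\mu_1}$. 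If $E\subseteq D^v$, the target permits a pole of order up to $\mu_1$, and indeed $c\lfloor \mu_1 a_i\rfloor-\mu_1(c-1)\le\mu_1$ is equivalent to $\lfloor \mu_1 a_i\rfloor\le\mu_1$, which holds since $a_i\le 1$. If instead $E\not\subseteq D$, then $c\ge b_i$ by the very definition of $b_i$ in Construction~\ref{cons:cdefn}, and $\lfloor \mu_1 a_i\rfloor\le \mu_1\tfrac{b_i-1}{b_i}$ bounds the pole order by $\mu_1\tfrac{b_i-c}{b_i}\le 0$, so $\pi^*\sigma$ is holomorphic along $E$, as required.

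These three cases exhaust the prime divisors of $Y$, so $\pi^*\sigma$ lands in $\Sym^{[m]}\Omega^p_Y(\log D^v)$ in codimension one, and local freeness upgrades this to the desired global morphism $d\pi^m$. The main obstacle is the sub-case $E\not\subseteq D$ above: this is precisely where the fractional $\cC$-multiplicity must interact correctly with the ramification, and the calibration $a_i=\tfrac{b_i-1}{b_i}$ with $b_i$ the minimal multiplicity of a non-$D$ component is exactly what makes the zero of order $c-1$ coming from $\pi^*dw_1$ absorb the $\cC$-pole $\lfloor \mu_1 a_i\rfloor$. A secondary point to get right is the codimension-$\ge 2$ case, which cannot be reached by the explicit coordinate computation and genuinely relies on Assumption~\ref{ass:fibration} to ensure that the offending divisors already lie in $D^v$.
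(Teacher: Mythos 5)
Your argument is correct and is essentially the paper's own proof: both reduce the claim to a pole check at codimension-one points of $Y$, both dispose of divisors contained in $D^v$ by the usual functoriality of the logarithmic pull-back, and for a divisor $E \not\subseteq D$ dominating some $\Delta_i$ your explicit computation with the generators of Observation~\ref{obs:lfetc} (pole order $c\lfloor \mu_1 a_i\rfloor - \mu_1(c-1) \leq \mu_1\frac{b_i-c}{b_i} \leq 0$ when $c \geq b_i$) is precisely the content of Computation~\ref{comp:obri} and Corollary~\ref{cor:pbsadpt}, which the paper invokes instead after observing that $\pi|_{Y \setminus D}$ is subadapted in the sense of Definition~\ref{def:adaptedmorphism}. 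Two small slips, both harmless: your three cases do not literally exhaust the prime divisors of $Y$ --- a vertical $E$ whose image is a divisor \emph{not} contained in $S_{\div}$ is missing, though it is settled by the same one-line argument as the horizontal case, since $\sigma$ is regular at the generic point of $\pi(E)$ because $\supp(\Delta) \subseteq S_{\div}$; and in the sub-case $E \not\subseteq D$ you tacitly assume $a_i < 1$ so that $b_i$ is defined, which is legitimate only because when $a_i = 1$ every divisor dominating $\Delta_i$ meets the equidimensional locus and is therefore contained in $D$ by Construction~\ref{cons:cdefn}, so that sub-case is vacuous.
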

\begin{subrem}\label{srem:pbcd}
  For Proposition~\ref{prop:spcdwn}, it is essential to assume that the pair
  $(Z, S_{\div})$ is snc. For an instructive example, let $Z$ be a singular
  space, $\pi: Y \to Z$ a log desingulariazion of $Z$, let $D$ be the
  $\pi$-exceptional locus, and take $m=1$ and $p = \dim Z$. In this setting,
  the assertion of Proposition~\ref{prop:spcdwn} holds if and only if the pair
  $(Z, \emptyset)$ is log canonical ---this is actually the definition of log
  canonicity. We refer to \cite{GKK08, GKKP10} for more general results in
  this context.
\end{subrem}
\begin{proof}[Proof of Proposition~\ref{prop:spcdwn}]
  Let $U \subseteq Z$ be an open set and let $\sigma \in \GA\bigl( U,\,
  \Sym^{[m]}_{\cC} \Omega_Z^p(\log \Delta) \bigr)$ be any section. Its
  pull-back $\pi^{[*]}(\sigma)$ then gives a rational section of the sheaf
  $\Sym^{[m]} \Omega_Y^p(\log D^v)$, possibly with poles along the
  codimension-one components of $\pi^{-1}(S)$. We need to show that
  $\pi^{[*]}(\sigma)$ does in fact not have any poles. To this end, let $E
  \subseteq \pi^{-1}(S)$ be any irreducible component with $\codim_Y E =
  1$. We will show $\pi^{[*]}(\sigma)$ does not have any poles along $E$.

  If $E \subseteq D^v$, note that
  $$
  \sigma \in \Gamma \bigl( U,\, \Sym^{[m]}_{\cC} \Omega_Z^p(\log \Delta)
  \bigr) \subseteq \Gamma \bigl( U,\, \Sym^{[m]} \Omega_Z^p(\log S_{\div})
  \bigr).
  $$
  Away from the small set in $Y^\circ$ where $\bigl(Y, \supp \pi^{-1}(S_{\div})
  \bigr)$ is not snc, the usual pull-back morphism for logarithmic
  differentials, $\pi^* \bigl( \Omega_Z^p(\log S_{\div}) \bigr) \to \Omega_Y^p
  \bigl(\log \supp \pi^{-1}(S_{\div}) \bigr)$ shows that $\pi^{[*]}(\sigma)$
  has at most logarithmic poles along $E$. In particular, $\pi^{[*]}(\sigma)$
  does not have any poles along $E$ as a section of $\Sym^{[m]}
  \Omega_Y^p(\log D^v)$.
  
  It remains to consider the case where $E \not \subset D^v$. In this case,
  Assumptions~\ref{ass:fibration} guarantee that $E$ dominates a component of
  $S_{\div}$. For simplicity of notation, we may remove from $Z$ all other
  irreducible components of $S$, and also the small set where $\pi$ is not
  equidimensional. We can then assume without loss of generality that $S =
  \pi(E)$, and that the restricted morphism $\pi|_{Y \setminus D}$ is
  surjective and equidimensional. By construction of $\Delta$, the morphism
  $\pi|_{Y \setminus D}$ is then subadapted, in the sense of
  Definition~\ref{def:adaptedmorphism}. In particular,
  Corollary~\ref{cor:pbsadpt} shows that $(\pi|_{Y \setminus
    D})^{[*]}(\sigma)$ is a section of $\Sym^{[m]} \Omega^p_{Y\setminus D}$
  without any poles along $E \cap (Y \setminus D)$.
\end{proof}

As an immediate corollary we see that the $\cC$-base of the fibration $\pi$ is
special if the logarithmic pair $(Y,D)$ is special.

\begin{cor}\label{cor:baseofspecialisspecial}
  Under the Assumptions~\ref{ass:fibration}, if the logarithmic pair $(Y,D)$
  is special in the sense of Definition~\ref{def:speciallog}, then the
  $\cC$-pair $(Z, \Delta)$ is special in the generalized sense of
  Definition~\ref{def:specialingen}. \qed
\end{cor}

\subsection{The push-forward map for $\pmb \cC$-differentials and sheaves}\label{ssec:pfbf}

To properly formulate the assumption that a section in $\Sym^{[m]}
\Omega_Y^p(\log D)$ comes from $Z$ ``generically'', consider the sheaf $\sB
\subseteq \Sym^{[m]} \Omega_Y^p(\log D)$, defined to be the saturation of the
image of the map $d\pi^m$ introduced in~\eqref{eq:pbob2}. The following
proposition then says that any section in $\sB$ comes from a globally defined
section on $Z$.

\begin{prop}\label{prop:dfhg}
  Under the Assumptions~\ref{ass:fibration}, if $\sB \subseteq \Sym^{[m]}
  \Omega_Y^p(\log D)$ is the saturation of the image of the map $d\pi^m$
  introduced in~\eqref{eq:pbob2}, then the natural injection
  $$
  \iota: \Sym^{[m]}_{\cC} \Omega_Z^p(\log \Delta) \to \pi_*(\sB)
  $$
  is isomorphic for all numbers $m$ and $p$.
\end{prop}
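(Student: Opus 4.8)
The plan is to establish injectivity and surjectivity of $\iota$ separately, reducing surjectivity to a codimension-one statement on $Z$ by exploiting reflexivity of the source. For injectivity, I would observe that over the locus where $\pi$ is smooth the pull-back map $d\pi^m$ of Proposition~\ref{prop:spcdwn} is induced fibrewise by the injective inclusion $\pi^*\Omega^p_Z \hookrightarrow \Omega^p_Y$ coming from the relative differential sequence; since $\Sym^{[m]}_{\cC}\Omega^p_Z(\log\Delta)$ is torsion free, any section killed by $\iota$ would have vanishing pull-back and hence already be zero.

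For surjectivity, I would begin with a local section $\tau$ of $\pi_*\sB$ over an open set $U \subseteq Z$, that is, a section of $\sB$ over $\pi^{-1}(U)$, and descend it to a $\cC$-differential on $U$. Over $\pi^{-1}(Z \setminus S)$, where $\pi$ is smooth, the image of $d\pi^m$ equals $\Sym^m\bigl(\pi^*\Omega^p_Z\bigr)$, which is a subbundle and hence already saturated, so $\tau$ lies in that image there; since $\pi$ has connected fibres one has $\pi_*\sO_Y = \sO_Z$, and the projection formula produces a unique $\sigma \in \Gamma\bigl(U\setminus S,\, \Sym^{[m]}\Omega^p_Z\bigr)$ with $\pi^{[*]}(\sigma) = \tau$ there. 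Two extension problems remain. Across $S_{\sml}$, and across the non-snc and non-equidimensional loci---all of codimension $\geq 2$ in $Z$---the section $\sigma$ extends automatically, because $\Sym^{[m]}_{\cC}\Omega^p_Z(\log\Delta)$ is reflexive by Corollary~\ref{cor:reflexivity}. The only genuine work is to verify the correct $\cC$-pole bound for $\sigma$ at a general point of each divisorial component $\Delta_i \subseteq S_{\div}$. Granting this, $\sigma$ is a section of $\Sym^{[m]}_{\cC}\Omega^p_Z(\log\Delta)$ over $U$, and $\pi^{[*]}(\sigma)$ coincides with $\tau$ on a dense open subset of the torsion-free sheaf $\sB$, hence everywhere, so that $\iota(\sigma) = \tau$.

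The local pole computation at $\Delta_i$ is the heart of the matter and the step I expect to be the main obstacle. I would choose, among the components $E_{i,j}$ of $\pi^{-1}(\Delta_i)$ that are \emph{not} contained in $D$, one realising the minimal multiplicity $b_i$ of Construction~\ref{cons:cdefn}, and pick adapted coordinates $z$ on $Z$ with $\Delta_i = \{z = 0\}$ together with a local equation $u$ for $E_{i,j}$, so that $E_{i,j}$ appears in $\pi^*(\Delta_i)$ with multiplicity exactly $b_i$. This places us in the situation of Computation~\ref{comp:obri2} with $n_i$ replaced by $b_i$, and Equation~\eqref{eq:POs} computes the pole order of $\pi^{[*]}(\sigma) = \tau$ along $E_{i,j}$ as $P(\sigma, \Delta_i)$. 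Because $E_{i,j} \not\subset D$, the section $\tau$ of $\Sym^{[m]}\Omega^p_Y(\log D)$ has no pole along $E_{i,j}$, which forces $P(\sigma, \Delta_i) \leq 0$; by Corollary and Definition~\ref{cdef:P} this is precisely the condition that $\sigma$ be a $\cC$-differential along $\Delta_i$. The delicate point is that the minimal multiplicity $b_i$ gives the sharpest inequality and matches exactly the $\cC$-multiplicity assigned to $\Delta_i$, whereas components of larger multiplicity, or components contained in $D$, would yield only weaker constraints; it is thus essential that $\Delta$ was built from the minimum in Construction~\ref{cons:cdefn}. The degenerate case $a_i = 1$, where every such component lies in $D$ and the $\cC$-multiplicity is $\infty$, must be treated apart: there $\tau$ may carry logarithmic poles along the $E_{i,j}$, and the standard comparison of logarithmic poles under pull-back (the $n_i = \infty$ case of Computation~\ref{comp:obri}) shows that $\sigma$ acquires at most a logarithmic pole along $\Delta_i$, which is again the required $\cC$-condition.
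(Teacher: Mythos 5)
Your proposal is correct and follows the paper's own proof in all essentials: you reduce to the equidimensional case with $S_{\sml} = \emptyset$ by reflexivity and torsion-freeness, descend the section over $Z \setminus S$ using $\pi_*(\sO_Y) = \sO_Z$ exactly as in Remark~\ref{rem:bisgenpb}, and your central pole estimate at a minimal-multiplicity component $E_{i,j} \not\subset D$ (over which the suitably restricted $\pi$ becomes an adapted morphism, with the case $a_i = 1$ treated separately via logarithmic poles) is precisely the content of the paper's Claim~\ref{sclm:mlf}. The only difference is packaging: the paper encodes the bookkeeping in saturated rank-one subsheaves $\sA \subseteq \sB$ and $\sA' \subseteq \Sym^{[m]}_{\cC}\Omega^p_Z(\log \Delta)$ and derives a contradiction with saturatedness, while you verify $P(\sigma, \Delta_i) \leq 0$ directly through Computation~\ref{comp:obri2} and Corollary~\ref{cdef:P}, which the independence statement in \ref{cdef:P} (and Corollary~\ref{cor:indep}) indeed licenses.
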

\begin{subrem}\label{rem:bisgenpb}
  Since the morphism $\pi$ is log smooth over $Z^\circ := Z \setminus S$, the
  standard sequence of logarithmic differentials on the preimage set $Y^\circ
  := \pi^{-1}(Z^\circ)$,
  $$
  \xymatrix{ 0 \ar[r] & \pi^*\bigl(\Omega^1_Z\bigr)|_{Y^\circ}
    \ar[r]^(.45){d\pi|_{Y^\circ}} & \Omega^1_Y(\log D)|_{Y^\circ} \ar[r] &
    \Omega^1_{Y/Z}|_{Y^\circ}\otimes \sO_{Y^\circ}(D) \ar[r] & 0, }
  $$
  shows that the cokernel of $d\pi|_{Y^\circ}$ is torsion free on $Y^\circ$
  and that the image of $d\pi|_{Y^\circ}$ is saturated in $\Omega^1_Y(\log
  D)|_{Y^\circ}$. By \cite[II,~Ex.~5.16]{Ha77}, the same holds for $p$-forms
  and their symmetric products. By Proposition~\ref{prop:satandsym}, the
  sheaves $\sB$ and $\pi^{[*]} \bigl(\Sym^{[m]}_{\cC} \Omega_Z^p(\log
  \Delta)\bigr)$ therefore agree along $Y^\circ$.
\end{subrem}

\begin{proof}
  Since $\Sym^{[m]}_{\cC} \Omega_Z^p(\log \Delta)$ is reflexive and
  $\pi_*(\sB)$ is the push-forward of a torsion-free sheaf, hence torsion
  free, it suffices to prove surjectivity of $\iota$ away from any given small
  set. We can therefore assume without loss of generality throughout the proof
  that $\pi$ is equidimensional and that $S_{\sml} = \emptyset$.

  Let $U \subseteq Z$ be any open set and let $\sigma \in \Gamma\bigl( U,\,
  \pi_*(\sB)\bigr)$ be any section.  By Remark~\ref{rem:bisgenpb}, the sheaves
  $\sB$ and $\pi^* \bigl(\Sym^{[m]}_{\cC} \Omega_Z^p(\log \Delta)\bigr)$ agree
  along $\pi^{-1}(U \cap Z^\circ)$. Since $\pi_*(\sO_Y) = \sO_Z$, the section
  $\sigma$ therefore induces a section
  $$
  \sigma' \in \Gamma\bigl( U \cap Z^\circ ,\, \Sym^{[m]}_{\cC} \Omega_Z^p(\log
  \Delta) \bigr).
  $$
  The sections $\sigma$ and $\sigma'$ define saturated subsheaves
  $$
  \sA \subseteq \sB|_{\pi^{-1}(U)} \text{\quad and \quad} \sA' \subseteq
  \Sym^{[m]}_{\cC} \Omega_Z^p(\log \Delta)|_U,
  $$
  together with an inclusion $d\pi^m : \pi^*(\sA') \to \sA$. We need to show
  that the obvious injective map
  \begin{equation}\label{eq:dpionh0}
    d\pi^m: \Gamma\bigl(U,\, \sA' \bigr) \to \Gamma\bigl(\pi^{-1}(U),\, \sA\bigr)    
  \end{equation}
  is surjective.

  As in Construction~\ref{cons:cdefn}, decompose $S_{\div} = \cup \Delta_i$
  into irreducible components. For any given index $i$, let $E_{i,j} \subset
  Y$ be those divisors that dominate $\Delta_i$.  Observe that $\Sym^{[m]}
  \Omega_Y^p(\log D)$ and $\Sym^{[m]}_{\cC} \Omega_Z^p(\log \Delta)$ are both
  locally free. In particular, the saturated subsheaves $\sA$ and $\sA'$ are
  reflexive of rank one, hence invertible, \cite[Lem.~1.1.15]{OSS}, and there
  exist non-negative numbers $c_{i,j}$ such that
  $$
  \sA \cong \pi^*(\sA') \otimes \sO_Y \bigl({\textstyle \sum} c_{i,j}E_{i,j} \bigr).
  $$
  With this notation, surjectivity of~\eqref{eq:dpionh0} is an immediate
  consequence of the following claim.
  \begin{subclaim}\label{sclm:mlf}
    For any index $i$ with $\Delta_i \cap U \not = \emptyset$, there exists an
    index $j$ such that $E_{i,j}$ appears in $\pi^*(S_{\div})$ with
    multiplicity strictly larger than $c_{i,j}$.
  \end{subclaim}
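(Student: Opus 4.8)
The plan is to prove the inequality by a purely local computation at the generic point of $\Delta_i$, where the tight relationship between the multiplicities $m_{i,j}$ and the $\cC$-multiplicity $b_i$ of $\Delta_i$ becomes transparent. Since the assertion concerns only the behaviour at the generic points of the $E_{i,j}$, I would first shrink $Z$ to a neighbourhood of the generic point of a fixed $\Delta_i$, discard every other component of $S_{\div}$, and work over the locus where $\pi$ is equidimensional, so that $S_{\div} = \Delta_i$ locally and the multiplicity of $E_{i,j}$ in $\pi^*(S_{\div})$ equals its multiplicity $m_{i,j}$ in $\pi^*(\Delta_i)$. There both $\Sym^{[m]}_{\cC}\Omega_Z^p(\log \Delta)$ and $\sB$ are locally free, so the saturated rank-one subsheaves $\sA' \subseteq \Sym^{[m]}_{\cC}\Omega_Z^p(\log \Delta)$ and $\sA \subseteq \sB$ are invertible with local generators $\sigma'$ and $\sigma$. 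Because $\sigma$ generates the saturation $\sA$ of the image of $d\pi^m$, the number $c_{i,j}$ is exactly the vanishing order $\mathrm{ord}_{E_{i,j}}\bigl(\pi^{[*]}\sigma'\bigr)$ of the pulled-back form inside the locally free sheaf $\sB \subseteq \Sym^{[m]}\Omega_Y^p(\log D)$.

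The crucial input is that $\sA'$ is \emph{saturated}, so its generator $\sigma'$ does not vanish along $\Delta_i$ as a section of $\Sym^{[m]}_{\cC}\Omega_Z^p(\log \Delta)$. Expanding $\sigma'$ in the explicit local generators of Observation~\ref{obs:lfetc}, this forces at least one monomial
$$
\frac{f}{t^{\lfloor m_1 \frac{b_i-1}{b_i}\rfloor}}\cdot (dt)^{m_1}\cdot(\cdots)
$$
to occur with a coefficient $f$ that is a unit along $\Delta_i$; here $t$ is a local equation of $\Delta_i$ and $m_1$ is the number of $dt$-factors of that term. In other words, the generator realises the maximal pole order $a_0 = \lfloor m_1 \frac{b_i-1}{b_i}\rfloor$ permitted by the $\cC$-structure for its $dt$-degree. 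For $p>1$ the generators have the same shape with $m_1$ the number of $dt$-factors, so I would simply invoke the analogues of Computation~\ref{comp:obri} that the paper leaves to the reader.

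Finally I would compute $c_{i,j}$ from the pole-order formula~\eqref{eq:POs}: pulling back each monomial term of $\sigma'$ gives vanishing order $(m_{i,j}-1)m_1^{(k)} - m_{i,j}a^{(k)}$ along $E_{i,j}$, each of which is $\geq 0$ by Proposition~\ref{prop:spcdwn}. Choosing coordinates with $\pi^* t = u^{m_{i,j}}\cdot(\text{unit})$ and the remaining coordinates of $Z$ pulling back to part of a coordinate system on $Y$, distinct monomials of $\sigma'$ pull back to leading terms of pairwise distinct type, so no cancellation occurs and $c_{i,j} = \min_k\bigl[(m_{i,j}-1)m_1^{(k)} - m_{i,j}a^{(k)}\bigr]$; in particular it suffices to exhibit a single term whose pull-back vanishes to order $< b_i$. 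Now I pick the index $j_0$ realising the minimum in the definition of $b_i$, i.e. a component $E_{i,j_0}\not\subseteq D$ with $m_{i,j_0}=b_i$, and evaluate the distinguished term: its contribution is $(b_i-1)m_1 - b_i\lfloor \frac{(b_i-1)m_1}{b_i}\rfloor$, the residue of $(b_i-1)m_1$ modulo $b_i$, which lies in $\{0,\dots,b_i-1\}$. Hence $c_{i,j_0}\le (b_i-1)m_1 - b_i\lfloor \frac{(b_i-1)m_1}{b_i}\rfloor < b_i = m_{i,j_0}$, as claimed. The degenerate case $a_i=1$, where every $E_{i,j}$ lies in $D$ and $\Delta_i$ carries $\cC$-multiplicity $\infty$, is handled separately: there the generator contains a genuine logarithmic term $(dt/t)^{m_1}(\cdots)$, and since $\pi^*(dt/t)=m_{i,j}\,du/u+(\text{regular})$ is a generator of $\Omega^1_Y(\log D)$ along $E_{i,j}$, the pull-back has vanishing order $c_{i,j}=0<m_{i,j}$ for every $j$.

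I expect the main obstacle to be the bookkeeping in the second step: extracting from the mere saturatedness of $\sA'$ the precise statement that some monomial of its generator attains the maximal $\cC$-pole order with a unit coefficient, together with the no-cancellation verification for the pulled-back monomials (and its compatibility with reflexive symmetric products when $p>1$). Once that structural fact is in hand, the heart of the argument is the elementary observation that $b_i\lfloor\frac{(b_i-1)m_1}{b_i}\rfloor$ differs from $(b_i-1)m_1$ by a residue strictly smaller than $b_i$ — which is precisely why the $\cC$-multiplicity $b_i$ is defined in Construction~\ref{cons:cdefn} as the minimal multiplicity over the components not contained in $D$.
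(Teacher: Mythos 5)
Your proposal is correct, and it shares the paper's skeleton — the same case split according to whether $a_i = 1$ or $a_i < 1$, the saturatedness of $\sA'$ as the decisive input, and in the main case the choice of the component $E_{i,j_0} \not\subset D$ realizing the minimum $b_i$ — but the implementation is genuinely different. The paper never expands anything in coordinates: for $a_i < 1$ it discards the other components $\Delta_{i'}$ and divisors $E_{i,j'}$, observes that the restricted map $\pi^\circ$ is \emph{adapted}, and argues contrapositively that $c_{i,j_0} \geq b_i$ would exhibit $\sA' \otimes \sO_{U^\circ}(\Delta_i)$ as a subsheaf of $\Sym^{[m]}_{\cC}\Omega^p(\log \Delta)$ via Definition~\ref{def:orbifoldDiff} and Corollary~\ref{cor:indep}, contradicting saturatedness; for $a_i = 1$ it quotes the standard fact that a local generator of the logarithmic sheaf pulls back to a non-vanishing logarithmic form, giving $c_{i,j} = 0$. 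You instead compute directly: you identify $c_{i,j}$ as the vanishing order of the pull-back of a local generator of $\sA'$ (valid, since $\sA$ is saturated in $\sB$ and $\sB$ in the locally free sheaf $\Sym^{[m]}\Omega^p_Y(\log D)$, so the orders agree at codimension-one points), you extract from saturatedness a monomial attaining the maximal permitted pole $\lfloor m_1 \frac{b_i-1}{b_i}\rfloor$ with unit coefficient, and you bound $c_{i,j_0}$ by the residue of $(b_i-1)m_1$ modulo $b_i$, which is $< b_i = \mathrm{mult}_{E_{i,j_0}}\pi^*(S_{\div})$. What your route buys is an explicit quantitative bound and a transparent explanation of why $b_i$ is defined in Construction~\ref{cons:cdefn} as a minimum over components not contained in $D$; the cost is exactly the bookkeeping you flag — the no-cancellation verification (which becomes immediate if you normalize $\pi^* t = u^{m_{i,j_0}}$ exactly, absorbing an $m_{i,j_0}$-th root of the unit into $u$, so that each monomial pulls back to a unit times a single distinct basis monomial) and the $p>1$ analogue of Computation~\ref{comp:obri}, both of which the paper's soft argument sidesteps. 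Two cosmetic slips, neither fatal: $\sA$ is the saturation of the rank-one subsheaf generated by the section, not of the image of $d\pi^m$ (that is $\sB$); and in the case $a_i = 1$ saturation only forces some monomial with $a = m_1$ and unit coefficient, which may have $m_1 = 0$, so the generator need not contain a ``genuine logarithmic term'' — but its pull-back is then a nowhere-vanishing regular form along $E_{i,j}$, so your conclusion $c_{i,j} = 0$ stands in all cases and coincides with the standard fact the paper invokes.
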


  \noindent \emph{Application of Claim~\ref{sclm:mlf}.} Assume that
  Claim~\ref{sclm:mlf} holds true. We can view $\sigma'$ as a
  $\cC$-differential with poles along the $\Delta_i$,
  $$
  \sigma' \in \Gamma \left( U,\, \bigl(\Sym^{[m]}_{\cC} \Omega_Z^p(\log \Delta)
  \bigr)\otimes \sO_Z(m_i \Delta_i) \right).
  $$
  We need to show that all numbers $m_i$ are zero. Observe that the section
  $\sigma$ can be seen as a rational section in $\pi^*(\sA')$ whose pole order
  along any component $E_{i,j}$ is at least $m_i$ times the multiplicity of
  $E_{i,j}$ in $\pi^*(\Delta_i)$. With Claim~\ref{sclm:mlf}, this is possible
  if and only if $m_i = 0$ for all indices $i$. In particular, $\sigma$ lies
  in the image of the map~\eqref{eq:dpionh0}. Proposition~\ref{prop:dfhg} is
  thus shown once Claim~\ref{sclm:mlf} is established.

  \medskip

  \noindent \emph{Proof of Claim~\ref{sclm:mlf}.} To prove
  Claim~\ref{sclm:mlf}, let any index $i$ be given.

  If $a_i = 1$, let $j$ be any other index. By definition of $a_i$, the
  divisor $E_{i,j}$ is then contained in $D$. Let $y \in Y$ be a general point
  of $E_{i,j}$ and set $z := \pi(y)$.  Claim~\ref{sclm:mlf} then reduces to
  the standard fact that near $z$ and $y$, respectively, the pull-back of a
  local generator of $\Omega^p_Z(\log \Delta_i)$ gives a non-vanishing section
  in $\Omega^p_Y(\log E_{i,j})$. It follows that $c_{i,j} = 0$ for all $j$,
  proving Claim~\ref{sclm:mlf} in this case.

  If $a_i < 1$, then there exists an index $j$ such that $E_{i,j} \not \subset
  D$, such that $b_i$ is the multiplicity of $E_{i,j}$ in $\pi^*(\Delta_i)$,
  and $a_i = \frac{b_i-1}{b_i}$. As above, let $y \in Y$ be a general point of
  $E_{i,j}$ and set $z := \pi(y)$.  Thus, if we set
  $$
  U^\circ := U \setminus \bigcup_{i' \not = i} \Delta_{i'} \text{\quad and
    \quad} V^\circ := \pi^{-1}(U^\circ) \setminus \bigcup_{j' \not = j}
  E_{i,j'},
  $$
  then $y \in V^\circ$, $z \in U^\circ$, and the morphism $\pi^\circ :=
  \pi|_{V^\circ}$ is adapted. Now, if the claim was false and $b_i \leq
  c_{i,j}$, we obtain a morphism
  $$
  (\pi^\circ)^*(\sA' \otimes \sO_{U^\circ}(\Delta_i)) \to \sA|_{V^\circ}
  \subseteq \Sym^{[m]} \Omega^p_{V^\circ}(\log D).
  $$
  By Definition~\ref{def:orbifoldDiff} of $\cC$-differentials and by
  Corollary~\ref{cor:indep}, this says that $\sA' \otimes
  \sO_{U^\circ}(\Delta_i)$ is a subsheaf of $\Sym^{[m]}_{\cC}
  \Omega_{U^\circ}^p(\log \Delta)$, contradicting the assumption that $\sA'$
  is saturated in $\Sym^{[m]}_{\cC} \Omega_{U^\circ}^p(\log \Delta)$.
\end{proof}

We end this section with a discussion of push-forward properties of subsheaves
of $\sB$. Given a saturated subsheaf $\sA \subseteq \sB$ of rank one on $Y$,
with non-negative Kodaira-Iitaka dimension, we can construct a reflexive rank
one subsheaf $\sA_Z \subseteq \Sym^{[m]}_{\cC} \Omega^p_Z(\log \Delta)$ on the
base of the fibration, whose $\cC$-Kodaira-Iitaka dimension agrees with the
standard Kodaira-Iitaka dimension of $\sA$.  This sheaf will be used in the
proof of Theorem \ref{thm:main}.

\begin{cor}\label{cor:pdvzs}
  In the setup of Proposition~\ref{prop:dfhg}, let $\sA \subseteq \sB$ be a
  saturated subsheaf of rank one with $\kappa(\sA) \geq 0$.  Then there exists
  a saturated, reflexive subsheaf $\sA_Z \subseteq \Sym^{[m]}_{\cC}
  \Omega^p_Z(\log \Delta)$ of rank one such that $d\pi^m \bigl( \pi^*(\sA_Z)
  \bigr) \subseteq \sA$ and $\kappa_{\cC}(\sA_Z) = \kappa(\sA)$.
\end{cor}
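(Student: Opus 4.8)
The plan is to produce $\sA_Z$ by pushing forward $\sA$ and intersecting with the base orbifold sheaf, then to verify the two required properties. More precisely, let $\sA_Z$ be the saturation inside $\Sym^{[m]}_{\cC}\Omega^p_Z(\log\Delta)$ of the image of the composite $\pi_*(\sA)\to\pi_*(\sB)\xleftarrow{\ \sim\ }\Sym^{[m]}_{\cC}\Omega^p_Z(\log\Delta)$, where the isomorphism is supplied by Proposition~\ref{prop:dfhg}. Since $\sA\subseteq\sB$, the sheaf $\pi_*(\sA)$ is a subsheaf of $\pi_*(\sB)$, and transporting it through the isomorphism $\iota^{-1}$ of Proposition~\ref{prop:dfhg} realizes it as a subsheaf of $\Sym^{[m]}_{\cC}\Omega^p_Z(\log\Delta)$; taking its saturation yields a rank-one reflexive sheaf $\sA_Z$ by Proposition~\ref{prop:satisrefl}. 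The inclusion $d\pi^m(\pi^*(\sA_Z))\subseteq\sA$ should then follow by adjunction: over the locus $Z^\circ$ where $\pi$ is log smooth, Remark~\ref{rem:bisgenpb} identifies $\sB$ with $\pi^{[*]}(\Sym^{[m]}_{\cC}\Omega^p_Z(\log\Delta))$, so a section of $\sA_Z$ downstairs pulls back into $\sA$, and since both sides are saturated (hence reflexive of rank one) this inclusion extends across the small complement of $Z^\circ$.

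The second step is to compute the $\cC$-Kodaira-Iitaka dimension of $\sA_Z$ and match it with $\kappa(\sA)$. First I would establish $\kappa_{\cC}(\sA_Z)\geq\kappa(\sA)$. For this, the key observation is that for each $m'\in\bN^+$ the $\cC$-product $\Sym^{[m']}_{\cC}\sA_Z$ is, by Proposition~\ref{prop:prod}, the reflexive hull of $\sA_Z^{[m']}\otimes\sO_Z(\lfloor m'\cdot R_{\sA_Z}\rfloor)$, and its sections pull back via $d\pi^{m'm}$ into sections of the corresponding reflexive power of $\sA$. The defect divisor $R_{\sA_Z}$ contributes the extra fractional positivity along the $\Delta_i$ that, after pull-back, is absorbed by the branching of $\pi$ along the $E_{i,j}$ encoded in Construction~\ref{cons:cdefn}; thus $h^0(Z,\Sym^{[m']}_{\cC}\sA_Z)$ is bounded below by the growth of $h^0(Y,\sA^{[m']})$, giving $\kappa_{\cC}(\sA_Z)\geq\kappa(\sA)$. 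Conversely, Corollary~\ref{cor:realizationofkappa} provides the reverse inequality: choosing the adapted morphism $\gamma$ compatibly with $\pi$ (or factoring through it), one obtains $\kappa_{\cC}(\sA_Z)\leq\kappa\bigl((\gamma^*\sA_Z\otimes\sO_Y(\gamma^*R_{\sA_Z}))^{**}\bigr)$, and the latter is controlled by $\kappa(\sA)$ because the pull-back lands inside $\sA$ and $\kappa$ is monotone under saturated rank-one inclusions on a fixed space.

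The hard part will be the precise bookkeeping of defect divisors and branching multiplicities to force the two inequalities to coincide, rather than merely giving $\kappa_{\cC}(\sA_Z)=\kappa(\sA)$ up to an a priori gap. The delicate point is that $R_{\sA_Z}$ is defined intrinsically on $(Z,\Delta)$ via Cor.-Def.~\ref{cdef:P}, whereas the matching with $\kappa(\sA)$ passes through the pull-back $\pi^*$, and one must check that the flooring operation $\lfloor m'\cdot R_{\sA_Z}\rfloor$ in Proposition~\ref{prop:prod} does not lose sections compared to the integral pole orders that $\sA$ sees upstairs. I expect this to reduce to the local statement of Computation~\ref{comp:obri2} together with the multiplicity computation $b_i$ from Construction~\ref{cons:cdefn}: the multiplicity $b_i$ is exactly calibrated so that a pole of $\sigma'$ of the admissible $\cC$-order along $\Delta_i$ pulls back to a genuine (integral, non-positive) pole order along $E_{i,j}$, which is precisely the content already exploited in the proof of Claim~\ref{sclm:mlf}. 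Once this calibration is made explicit, both inequalities become equalities and the corollary follows; the remaining verifications (reflexivity, rank one, saturatedness of $\sA_Z$) are routine consequences of Propositions~\ref{prop:satisrefl} and~\ref{prop:dfhg}.
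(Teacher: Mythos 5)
Your construction of $\sA_Z$ --- the saturation of $\pi_*(\sA)$ inside $\Sym^{[m]}_{\cC}\Omega^p_Z(\log\Delta)$ via the isomorphism of Proposition~\ref{prop:dfhg} --- and your argument that $d\pi^m\bigl(\pi^*(\sA_Z)\bigr)\subseteq\sA$ holds generically and then everywhere by saturatedness of $\sA$ coincide with the paper's proof. But two points are genuinely missing. First, you assert that the saturation is reflexive \emph{of rank one}, citing Proposition~\ref{prop:satisrefl}; that proposition only gives reflexivity. You must show that $\pi_*(\sA)$ is generically of rank exactly one --- in particular, nonzero --- and this is precisely where the hypothesis $\kappa(\sA)\geq 0$ enters, which your proposal never invokes. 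The paper's argument: by Remark~\ref{rem:bisgenpb} the restriction $\sB|_F$ to a general fiber $F$ is trivial; by the assumption $\kappa(\sA)\geq 0$ some power of the invertible sheaf $\sA|_F\subseteq\sB|_F$ has a nontrivial section, which, being a constant section of a trivial bundle, is nowhere vanishing, so $\sA|_F$ is trivial and $\pi_*(\sA)$ has rank one. Without this, $\pi_*(\sA)$ could be zero and your construction collapses.

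Second, and more seriously, the inequality $\kappa_{\cC}(\sA_Z)\geq\kappa(\sA)$ --- the heart of the corollary --- is not proved. Your second paragraph argues in the wrong direction: pulling sections of $\Sym^{[m']}_{\cC}\sA_Z$ back via $d\pi^{m\cdot m'}$ gives an \emph{injection} $H^0\bigl(Z,\,\Sym^{[m']}_{\cC}\sA_Z\bigr)\into H^0\bigl(Y,\,\sA^{\otimes m'}\bigr)$, which bounds $h^0$ on $Z$ from \emph{above} and yields only $\kappa_{\cC}(\sA_Z)\leq\kappa(\sA)$; it cannot yield the claimed lower bound. For ``$\geq$'' you need that every global section of $\sA^{\otimes m'}$ \emph{descends} to $Z$, and your third paragraph explicitly defers exactly this point (``the hard part'') to an unexecuted local calibration of the multiplicities $b_i$ against the defect divisor $R_{\sA_Z}$. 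No such bookkeeping is needed: the paper simply applies Proposition~\ref{prop:dfhg} a second time, with $m$ replaced by $m\cdot m'$. Letting $\sB'$ be the saturation of the image of $d\pi^{m\cdot m'}$, one has $\sA^{\otimes m'}\subseteq\sB'$ and $\pi_*(\sB')\cong\Sym^{[m\cdot m']}_{\cC}\Omega^p_Z(\log\Delta)$, so every $\sigma\in\Gamma\bigl(Y,\,\sA^{\otimes m'}\bigr)$ induces a global section $\sigma'$ on $Z$ which away from $S_{\div}$ lies in $\sA_Z^{[m']}$, hence lies in its saturation, which by Notation~\ref{not:orbifoldsaturation} is precisely $\Sym^{[m']}_{\cC}\sA_Z$; this produces the injection $\Gamma\bigl(Y,\,\sA^{\otimes m'}\bigr)\into\Gamma\bigl(Z,\,\Sym^{[m']}_{\cC}\sA_Z\bigr)$ and with it the equality of dimensions. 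Your plan via Computation~\ref{comp:obri2} and Claim~\ref{sclm:mlf} would, if completed, essentially reprove a special case of Proposition~\ref{prop:dfhg} rather than use it. Finally, your appeal to Corollary~\ref{cor:realizationofkappa} for ``$\leq$'' is also off target: the morphism $\gamma$ there is an adapted cover of $Z$, which neither factors through $\pi$ nor is otherwise related to $Y$, so the sheaf $\bigl(\gamma^*\sA_Z\otimes\sO(\gamma^*R_{\sA_Z})\bigr)^{**}$ lives on the wrong space to be compared with $\sA$; the ``$\leq$'' direction follows instead from the pull-back injection along $\pi$ described above, using Proposition~\ref{prop:spcdwn} and the saturatedness of $\sA^{\otimes m'}$.
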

\begin{proof}
  If $F \subset Y$ is a general $\pi$-fiber, Remark~\ref{rem:bisgenpb} implies
  that the restriction $\sB|_F$ is trivial. Since a tensor product of the
  restriction $\sA|_F \subseteq \sB|_F$ has a non-trivial section by
  assumption, this implies that $\sA|_F$ is also trivial. In particular, the
  sheaf $\pi_*(\sA)$ is generically of rank one. Consider the inclusion
  $$
  \pi_*(\sA) \subseteq \pi_*(\sB) \cong \Sym^{[m]}_{\cC} \Omega_Z^p(\log
  \Delta)
  $$
  and let $\sA_Z$ be the saturation of $\pi_*(\sA)$ in $\Sym^{[m]}_{\cC}
  \Omega_Z^p(\log \Delta)$. It is clear that $d\pi^m \bigl( \pi^*(\sA_Z)
  \bigr) \subseteq \sA$ holds generically, and since $\sA$ is saturated, this
  inclusion will hold everywhere.

  It remains to show that $\kappa_{\cC}(\sA_Z) = \kappa(\sA)$. The inequality
  $\kappa_{\cC}(\sA_Z) \leq \kappa(\sA)$ is clear. To prove that
  $\kappa_{\cC}(\sA_Z) \geq \kappa(\sA)$, note that if $m'$ is any number, if
  $\sB'$ is the saturation of the image
  $$
  d\pi^{m \cdot m'} : \pi^* \bigl( \Sym^{[m \cdot m']}_{\cC} \Omega_Z^p(\log
  \Delta)\bigr) \to \Sym^{[m \cdot m']} \Omega_Y^p(\log D^v)
  $$
  and $\sigma \in \Gamma \bigl( Y,\, \sA^{\otimes m'}\bigr)$ any section, then
  the inclusion $\sA^{\otimes m'} \subseteq \sB'$ shows that $\sigma$ induces
  a section $\sigma' \in \Gamma \bigl(Z,\, \Sym^{[m]}_{\cC} \Omega_Z^p(\log
  \Delta)\bigr)$ which, away from $S_{\div}$, lies in $\sA_Z^{[m']} \subseteq
  \Sym^{[m]}_{\cC} \Omega_Z^p(\log \Delta)$. It follows that $\sigma'$ is a
  section in the saturation of $\sA_Z^{[m']}$ which, by definition, is
  precisely $\Sym^{[m']}_{\cC} \sA_Z$. In summary, we obtain an injection
  $$
  \Gamma \bigl( Y,\, \sA^{\otimes m'}\bigr) \to \Gamma \bigl( Z,\,
  \Sym^{[m']}_{\cC} \sA_Z\bigr).
  $$
  This shows the equality of Kodaira-Iitaka dimensions.
\end{proof}

\part{FRACTIONAL POSITIVITY}\label{P2}

\section{The slope filtration for $\cC$-differentials}
\label{sec:slope}

The results of the following two sections are new to the best of our
knowledge.  In this section we discuss a weak variant of the Harder-Narasimhan
filtration that works on sheaves of $\cC$-differentials and takes the extra
fractional positivity of these sheaves into account.

If $X$ is a normal polarized variety, $\sF$ a reflexive sheaf with slope
$\mu(\sF) \leq 0$ and $\sA \subset \sF$ a subsheaf with positive slope, it is
clear that the maximally destabilizing subsheaf of $\sF$ is a proper subsheaf
of positive slope. In particular, there exists a number $p < \rank \sF$, and a
rank-one subsheaf $\sB \subset \bigwedge^{[p]} \sF$ that is likewise of
positive slope $\mu(\sB) > 0$. The following proposition gives a similar, but
slightly stronger result when $\sF$ is replaced with the sheaf of
$\cC$-differentials.

\begin{prop}\label{prop:HNF}
  Let $(X, D)$ be a $\cC$-pair of dimension $n$, as in
  Definition~\ref{def:orbifold}. Assume that $X$ is projective and
  $\bQ$-factorial, and let $A$ be an ample Cartier divisor. If $(K_X +
  D).A^{n-1} \leq 0$ and if there exists a number $m$ and a reflexive sheaf
  $\sA \subseteq \Sym^{[m]}_{\cC} \Omega^1_X(\log D)$ of rank one with
  $c_1(\sA).A^{n-1} > 0$, then there exists a number $p < \dim X$ and
  reflexive sheaf $\sB \subset \Sym^{[1]}_{\cC} \Omega^p_X(\log D)$ of rank
  one with $\Div_{\cC}(\sB).A^{n-1} > 0$.
\end{prop}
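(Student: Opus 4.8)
The plan is to reduce everything to an ordinary Harder--Narasimhan argument on a cyclic adapted cover, where the fractional positivity recorded by the $\cC$-structure becomes honest positivity of a determinant line bundle. Using Proposition~\ref{prop:existencesofcovers}, choose a cyclic adapted cover $\gamma : Y \to X$ with extra branching along a general member of a sufficiently ample linear system, and set $A_Y := \gamma^* A$, which is again ample since $\gamma$ is finite. Write $\sE_Y := \Omega^1_Y(\log D_\gamma)_{\rm adpt}$ for the reflexive sheaf of adapted differentials. By Remark~\ref{rem:c1ofadptdiffs} its determinant is $\sO_Y(\gamma^*(K_X+D))$, so the projection formula gives $c_1(\sE_Y).A_Y^{n-1} = (\deg \gamma)\,(K_X+D).A^{n-1} \leq 0$, whence the $A_Y$-slope satisfies $\mu(\sE_Y) \leq 0$. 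On the other hand, the defining property of $\cC$-differentials (Definition~\ref{def:orbifoldDiff}) shows that $\gamma^{[*]}\sA$ is a rank-one subsheaf of $\Sym^{[m]}\sE_Y$, and again by the projection formula $c_1(\gamma^{[*]}\sA).A_Y^{n-1} = (\deg\gamma)\,c_1(\sA).A^{n-1} > 0$. Thus $\Sym^{[m]}\sE_Y$ contains a subsheaf of strictly positive slope.

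Next I would run the usual slope formalism on the normal projective variety $Y$, polarized by $A_Y$. The positive subsheaf just constructed forces $\mu_{\max}(\Sym^{[m]}\sE_Y) > 0$. Since we work in characteristic zero, symmetric powers of semistable sheaves are semistable, so $\mu_{\max}(\Sym^{[m]}\sE_Y) = m\cdot\mu_{\max}(\sE_Y)$ and therefore $\mu_{\max}(\sE_Y) > 0$. Let $\sG_Y \subseteq \sE_Y$ be the maximal destabilising subsheaf, so that $\mu(\sG_Y) = \mu_{\max}(\sE_Y) > 0 \geq \mu(\sE_Y)$. In particular $\sG_Y \neq \sE_Y$, so $p := \rank \sG_Y$ satisfies $1 \leq p < n = \dim X$. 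Passing to determinants, the saturated inclusion $\sG_Y \hookrightarrow \sE_Y$ induces an inclusion of rank-one reflexive sheaves $\sB_Y := \det \sG_Y \hookrightarrow \bigwedge^{[p]}\sE_Y$, with $c_1(\sB_Y).A_Y^{n-1} = p\cdot\mu(\sG_Y) > 0$. Since $\bigwedge^{[p]}$ of the adapted $1$-differentials lands in the adapted $p$-differentials, $\sB_Y$ is a rank-one saturated subsheaf of $\Omega^{[p]}_Y(\log D_\gamma)_{\rm adpt}$ of positive $A_Y$-degree.

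It remains to descend $\sB_Y$ to the required sheaf on $X$. Because $A_Y = \gamma^* A$ is invariant under the Galois group $G$ of $\gamma$ and the Harder--Narasimhan filtration is canonical, the subsheaf $\sG_Y$, and hence $\sB_Y$, is $G$-invariant. Over the open set $X^\circ \subseteq X$ where $(X,\lceil D\rceil)$ is snc (the complement of which has codimension $\geq 2$, cf.~Remark~\ref{rem:wherearecoords}), the cover is a standard cyclic cover and the $G$-invariant adapted differentials are exactly the $\cC$-differentials; thus $\sB_Y|_{\gamma^{-1}(X^\circ)}$ descends to a rank-one subsheaf of $\Sym^{[1]}_{\cC}\Omega^p_X(\log D)|_{X^\circ}$. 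Extending by reflexivity as in the proof of Corollary~\ref{cor:reflexivity} and then saturating, I obtain a saturated rank-one subsheaf $\sB \subseteq \Sym^{[1]}_{\cC}\Omega^p_X(\log D)$ whose reflexive pull-back saturates to $\sB_Y$. Corollary~\ref{cor:realizationofkappa} (with $m=1$) then identifies the divisor classes, $\gamma^*\Div_{\cC}(\sB) = c_1(\sB_Y)$, and the projection formula yields $\Div_{\cC}(\sB).A^{n-1} = (\deg\gamma)^{-1}\,c_1(\sB_Y).A_Y^{n-1} > 0$, as required.

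I expect the descent step to be the main obstacle. One must check carefully that the canonical subsheaf $\sB_Y$, which a priori only lies in $\Omega^{[p]}_Y(\log D_\gamma)$, in fact lies in the \emph{adapted} differentials and is $G$-invariant, and that its $G$-descent is genuinely a $\cC$-differential on $(X,D)$ whose $\cC$-divisor class matches $c_1(\sB_Y)$ under $\gamma^*$. All the positivity and the dimension bound $p<\dim X$ fall out of the ordinary slope formalism once the determinant of the adapted sheaf has been computed; the only place where the fractional boundary data genuinely intervenes is in this bookkeeping between $\Div_{\cC}$ downstairs and the honest first Chern class upstairs.
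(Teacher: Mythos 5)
Your proposal is correct and follows the paper's proof almost step for step: cyclic adapted cover from Proposition~\ref{prop:existencesofcovers}, the slope computation via Remark~\ref{rem:c1ofadptdiffs}, the maximal destabilizing subsheaf of $\Omega^{[1]}_Y(\log D_\gamma)_{\rm adpt}$ and its determinant, $G$-invariant descent over the complement of a codimension-two set followed by reflexive extension, and the defect-divisor bookkeeping at the end. The one genuine divergence is how you get $\mu_{\max}\bigl(\Omega^{[1]}_Y(\log D_\gamma)_{\rm adpt}\bigr)>0$ from the positive rank-one subsheaf $\gamma^{[*]}\sA$ of the reflexive symmetric power: the paper replaces $A$ by a multiple, restricts to a general complete-intersection curve $C_Y$, invokes Flenner's Mehta--Ramanathan-type theorem \cite{Flenner84} so that the Harder--Narasimhan filtration commutes with restriction, and argues on the curve that the restricted bundle cannot be anti-nef; you instead quote the characteristic-zero theorem that symmetric powers of slope-semistable sheaves are semistable, directly on $Y$. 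Both routes work, and yours is cleaner to state, but note that $Y$ is only normal, so you must cite a version of the tensor/symmetric-power semistability theorem valid for torsion-free sheaves on normal projective varieties --- the standard proofs of which reduce to general complete-intersection curves by exactly the restriction argument the paper makes explicit, so logically nothing is saved. (The curve $C_Y$ also serves the paper a second time, to guarantee local freeness of $\sB_Y$ along $C_Y$ for the degree computation; you bypass this legitimately, since $A_Y$ is Cartier and intersecting the Weil class $c_1(\sB_Y)$ with $A_Y^{n-1}$ is well defined.) One bookkeeping correction: Corollary~\ref{cor:realizationofkappa} yields inclusions and an inequality of Kodaira--Iitaka dimensions, not the identity $\gamma^*\Div_{\cC}(\sB)=c_1(\sB_Y)$; the identification you need is the isomorphism $\sB_Y \cong \bigl(\gamma^*(\sB)\otimes\sO_Y(\gamma^*R_{\sB})\bigr)^{**}$, which follows from the characterization of the defect divisor in Remark~\ref{rem:charDefect} together with the fact that the two sheaves agree generically inside $\Omega^{[p]}_Y(\log D_\gamma)$ --- precisely the final step of the paper's proof, and the point you rightly flagged as the delicate one.
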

\begin{proof}
  Let $H \subset X$ be a general hyperplane section, and $\gamma: Y \to X$ an
  adapted cover with extra branching along $H$ and cyclic Galois group $G$, as
  in Proposition~\ref{prop:existencesofcovers} on
  page~\pageref{prop:existencesofcovers}. We use Notation~\ref{not:adptweb}
  throughout the proof. Further, let 
  $H_{1,Y}, \ldots, H_{n-1,Y} \in |\gamma^*A|$ be general elements, and
  consider the associated complete intersection curve 
  $$
  C_Y := H_{1,Y} \cap \cdots \cap H_{n-1,Y}.
  $$
  Since Proposition~\ref{prop:HNF} remains invariant if we replace $A$ with a
  positive multiple, we may assume without loss of generality that the
  Mehta-Ramanathan theorem \cite[Thm.~1.2]{Flenner84} holds for $C_Y$,
  i.e.~that taking the Harder-Narasimhan filtration of the sheaf
  $\Omega^{[1]}_{Y}(\log D_{\gamma})_{\rm adpt}$ of adapted differentials
  commutes with restriction to $C_Y$.

  Recall from Remark~\ref{rem:c1ofadptdiffs} that
  $$
  c_1 \left( \Omega^{[1]}_{Y}(\log D_{\gamma})_{\rm adpt} \right) = c_1 \bigl(
  \gamma^*(K_X+D) \bigr).
  $$
  In particular, we have that $c_1 \left( \Omega^{[1]}_{Y}(\log
    D_{\gamma})_{\rm adpt} \right).C_Y = \gamma^*\bigl((K_X+D).A^{n-1}\bigr)
  \leq 0$. On the other hand, it follows immediately from the definition of
  $\cC$-differentials that there exists an inclusion
  $$
  \gamma^{[*]}(\sA) \into \Sym^{[m]} \Omega^{[1]}_{Y}(\log D_{\gamma})_{\rm
    adpt}.
  $$
  By assumption, we have that $c_1\bigl(\gamma^{[*]}(\sA)\bigr).C_Y = \gamma^*
  \bigl( c_1(\sA).A^{n-1} \bigr) > 0$. In particular, it follows that the
  vector bundle $\Omega^{[1]}_{Y}(\log D_{\gamma})_{\rm adpt}|_{C_Y}$ has
  negative degree, but is not anti-nef. Thus, the maximally destabilizing
  subsheaf $\sC_Y \subset \Omega^{[1]}_{Y}(\log D_{\gamma})_{\rm adpt}$ has
  positive slope, $c_1(\sC_Y).C_Y > 0$. It follows that $p := \rank
  \sC_Y < \dim Y = \dim X$, and that $\sB_Y := \det \sC_Y$ is a reflexive
  subsheaf $\sB_Y \subset \Omega^{[p]}_{Y}(\log D_{\gamma})_{\rm adpt}$ of
  rank one and positive slope.

  As a next step, we will construct a sheaf $\sB \subset \Sym^{[1]}_{\cC}
  \Omega^p_X(\log D)$ on $X$. To this end, observe that the line bundle
  $\sO_Y(\gamma^*A)$ is invariant under the action of the cyclic Galois group
  $G$ on the Picard group. Since the sheaf $\Omega^{[p]}_{Y}(\log
  D_{\gamma})_{\rm adpt}$ is also stable under the action of $G$, it follows
  immediately from the uniqueness of the maximally destabilizing sheaf that
  $\sC_Y$ and $\sB_Y$ are likewise $G$-stable. If we set $X^\circ := X_{\reg}
  \setminus \supp(D)$, then
  $$
  \Omega^{[1]}_{Y}(\log D_{\gamma})_{\rm adpt}|_{\gamma^{-1}(X^\circ)} =
  \gamma^{[*]}\bigl(\Sym^{[1]}_{\cC}\Omega^1_{Y}(\log D) |_{X^\circ}\bigr).
  $$
  Using the $G$-invariance of $\sB_Y$ we obtain a sheaf on $X^\circ$, say
  $\sB^\circ \subset \Sym^{[1]}_{\cC}\Omega^p_{X}(\log D) |_{X^\circ}$, such
  that $\gamma^{[*]}(\sB^\circ) = \sB_Y|_{\gamma^{-1}(X^\circ)}$. Let $\sB$ be
  the maximal extension\footnote{We refer to \cite[I.9.4]{EGA1} for a general
    discussion of the maximal extension, or \emph{prolongement canonique} of
    subsheaves.} of $\sB^\circ$ in $\Sym^{[1]}_{\cC}\Omega^p_{X}(\log D)$,
  i.e., the kernel of the natural map
  $$
  \Sym^{[1]}_{\cC}\Omega^p_{X}(\log D) \to \factor
  \Sym^{[1]}_{\cC}\Omega^p_{X}(\log D)|_{\gamma^{-1}(X^\circ)}.\sB^\circ..
  $$
  It is then clear that $\sB$ is reflexive of rank one. In particular, $\sB$
  is locally free wherever $X$ is smooth.

  It remains to show that $\Div_{\cC}(\sB).A^{n-1} > 0$. To this end, recall
  from Remark~\ref{rem:charDefect} that there is an inclusion
  \begin{equation}\label{eq:texs}
    \bigl( \gamma^*(\sB) \otimes \sO_Y(\gamma^* R_{\sB}) \bigr)^{**}
    \into \Omega^{[p]}_Y(\log D_{\gamma})
  \end{equation}
  whose cokernel is torsion free in codimension one. Since the left hand side
  of~\eqref{eq:texs} agrees with $\sB_Y$ generically, reflexivity then implies
  that
  \begin{equation}\label{eq:text}
    \sB_Y \cong \bigl( \gamma^*(\sB) \otimes \sO_Y(\gamma^* R_{\sB})
    \bigr)^{**}.
  \end{equation}
  We observe that the sheaf $\sB_Y$ is locally free along the general curve
  $C_Y$ because the construction of $\sB_Y$ does not depend on the choice of
  $C_Y$. The Isomorphism~\eqref{eq:text} then implies the following:
  \begin{align*}
    \gamma^*\bigl(\Div_{\cC}(\sB).A^{n-1} \bigr) & = \gamma^* \bigl( (c_1(\sB)+c_1(R_{\sB})).A^{n-1} \bigr) && \text{Def.~of $\Div_{\cC}$}\\
    &= c_1(\sB_Y).\bigl( \gamma^*(A) \bigr)^{n-1} & & \text{Isom.~\eqref{eq:text}}\\
    &= c_1(\sB_Y).C_Y > 0. && \text{Choice of $C_Y$}
  \end{align*}
  It follows that $\Div_{\cC}(\sB).A^{n-1} > 0$, as claimed.
\end{proof}

\section{Bogomolov-Sommese vanishing for $\cC$-pairs}
\label{sec:BSv}

In this section we generalize the classical Bogomolov-Sommese Vanishing
Theorem~\ref{thm:classBSv} to sheaves of $\cC$-differentials on $\cC$-pairs
with log canonical singularities.  To do so, we must restrict ourselves to the
case where $X$ is a projective, $\bQ$-factorial, and $\dim X \leq 3$.  The
restriction on the dimension is necessary to apply the Bogomolov-Sommese
vanishing theorem for log canonical threefold pairs\footnote{Building on the
  results of this paper, a stronger version of the Bogomolov-Sommese vanishing
  theorem has meanwhile been shown for $\cC$-pairs of arbitrary dimension,
  \cite[Sect.~7]{GKKP10}.}, \cite[Thm.~1.4]{GKK08}.

\begin{prop}[Bogomolov-Sommese vanishing for 3-dimensional $\cC$-pairs]\label{prop:BSvan}
  Let $(X, D)$ be a $\cC$-pair, as in Definition~\ref{def:orbifold}.  Assume
  that $X$ is projective and $\bQ$-factorial, that $\dim X \leq 3$ and that
  the pair $(X,D)$ is log canonical. If $1 \leq p \leq \dim X$ is any number
  and if $\sA \subseteq \Sym^{[1]}_{\cC} \Omega^p_X(\log D)$ is a reflexive sheaf
  of rank one, then $\kappa_{\cC}(\sA) \leq p$.
\end{prop}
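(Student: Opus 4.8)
The plan is to reduce the assertion to the Bogomolov–Sommese vanishing theorem for ordinary logarithmic differentials on a log canonical space, \cite[Thm.~1.4]{GKK08}, by passing to an adapted cover on which the $\cC$-differentials turn into honest logarithmic differentials. Replacing $\sA$ by its saturation in $\Sym^{[1]}_{\cC}\Omega^p_X(\log D)$ only increases $\kappa_{\cC}$, so I may assume $\sA$ is saturated and apply the machinery of Section~\ref{ssec:defect}.

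First I would choose a cyclic adapted cover $\gamma\colon Y\to X$ with extra branching along a general very ample hyperplane $H$, as in Proposition~\ref{prop:existencesofcovers}. Since $X$ is only $\bQ$-factorial, I would take the degree $N$ of $\gamma$ divisible enough that every divisor $\frac{N}{n_i}\cdot D_i$ is Cartier; the construction in the proof of Proposition~\ref{prop:existencesofcovers} then goes through unchanged and produces a normal cover $Y$. Corollary~\ref{cor:realizationofkappa}, applied with $m=1$, now yields a reflexive rank-one sheaf
$$
\sB_Y := \bigl( \gamma^*(\sA)\otimes \sO_Y(\gamma^* R_{\sA}) \bigr)^{**}
\subseteq \Omega^{[p]}_Y(\log D_\gamma)
$$
together with the inequality $\kappa_{\cC}(\sA)\leq \kappa(\sB_Y)$. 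It therefore suffices to prove $\kappa(\sB_Y)\leq p$.

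The heart of the argument is to produce a log canonical pair on $Y$ whose sheaf of logarithmic $p$-forms contains $\sB_Y$. The extra branching, which at first looks like a nuisance, fits exactly into the adjunction formula: since $\gamma$ is totally branched of order $N$ over $H$, we have $\gamma^*(H)=N\cdot H_\gamma$, and adding $H_\gamma$ to both sides of Lemma~\ref{lem:adjunction} gives
$$
K_Y + D_\gamma + H_\gamma = \gamma^*(K_X+D) + (N-1)\cdot H_\gamma + H_\gamma = \gamma^*(K_X + D + H).
$$
Because $X$ is $\bQ$-factorial, $K_X+D+H$ is $\bQ$-Cartier; because $(X,D)$ is log canonical and $H$ is a general member of a very ample (hence basepoint-free) linear system, the pair $(X,D+H)$ is again log canonical by the standard Bertini-type statement for singularities of pairs. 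The pull-back property of log canonicity under finite surjective morphisms of normal varieties then shows that $(Y,\,D_\gamma+H_\gamma)$ is log canonical.

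Finally I would invoke the log canonical Bogomolov–Sommese vanishing theorem. Using the inclusions $\sB_Y\subseteq \Omega^{[p]}_Y(\log D_\gamma)\subseteq \Omega^{[p]}_Y(\log(D_\gamma+H_\gamma))$ and the fact that $(Y,\,D_\gamma+H_\gamma)$ is projective, log canonical and of dimension $\leq 3$, the theorem \cite[Thm.~1.4]{GKK08} gives $\kappa(\sB_Y)\leq p$, whence $\kappa_{\cC}(\sA)\leq \kappa(\sB_Y)\leq p$. The one bookkeeping point is that \cite[Thm.~1.4]{GKK08} requires the underlying space to be $\bQ$-factorial, while the cover $Y$ need not be; I would remove this by passing to a $\bQ$-factorialization $\tilde Y\to Y$, which exists for log canonical pairs of dimension $\leq 3$, is small and crepant, and therefore preserves log canonicity, the inclusion into the reflexive logarithmic differentials, and the Kodaira–Iitaka dimension simultaneously. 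I expect the verification that the cover preserves log canonicity—that is, the passage from $(X,D+H)$ to $(Y,D_\gamma+H_\gamma)$, together with the $\bQ$-factoriality bookkeeping—to be the main obstacle; everything else is a formal consequence of the properties of $\cC$-differentials already established above.
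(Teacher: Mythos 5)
Your proposal is correct and follows the paper's own strategy almost step for step---cyclic adapted cover with extra branching (Proposition~\ref{prop:existencesofcovers}), the sheaf $\sB_Y = \bigl( \gamma^*(\sA)\otimes\sO_Y(\gamma^* R_{\sA}) \bigr)^{**}$ with $\kappa_{\cC}(\sA)\leq\kappa(\sB_Y)$ from Corollary~\ref{cor:realizationofkappa}, log canonicity transported through the adjunction formula of Lemma~\ref{lem:adjunction} together with \cite[5.17 and 5.20]{KM98}, and finally \cite[Thm.~1.4]{GKK08}---but it deviates in two places. First, a harmless bookkeeping variant: you pull back the log canonical pair $(X, D+H)$ to $(Y, D_\gamma + H_\gamma)$, whereas the paper invokes \cite[2.27]{KM98} to pass to $(X, D + \frac{N-1}{N}H)$ and pulls that back to $(Y, D_\gamma)$; the two are equivalent for your purposes since $\Omega^{[p]}_Y(\log D_\gamma)\subseteq\Omega^{[p]}_Y\bigl(\log (D_\gamma+H_\gamma)\bigr)$. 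Second, and more substantively, your closing ``bookkeeping point'' misremembers the hypothesis of \cite[Thm.~1.4]{GKK08}: that theorem does not require the ambient space to be $\bQ$-factorial, it requires the reflexive rank-one sheaf to be $\bQ$-Cartier. This is precisely where the proposition's assumption that $X$ is $\bQ$-factorial enters in the paper's proof: writing $\sA \cong \sO_X(D')$ for a Weil divisor $D'$, one has $\sB_Y \cong \sO_Y\bigl(\gamma^*(D' + R_{\sA})\bigr)$, and since a suitable multiple of $D' + R_{\sA}$ is Cartier on the $\bQ$-factorial $X$, its pull-back is $\bQ$-Cartier---a one-line verification that replaces your detour through a small $\bQ$-factorialization of $(Y, D_\gamma+H_\gamma)$. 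Your detour is logically sufficient (on a $\bQ$-factorial model every reflexive rank-one sheaf is $\bQ$-Cartier, and smallness and crepancy do preserve log canonicity, the inclusion into $\Omega^{[p]}(\log)$, and the Kodaira--Iitaka dimension, as you say), but it imports nontrivial MMP input---the existence of small $\bQ$-factorializations of log canonical threefold pairs---where none is needed. Your two points of extra care, namely saturating $\sA$ first (which leaves $\Sym^{[m]}_{\cC}\sA$ and hence $\kappa_{\cC}(\sA)$ unchanged, since the $\cC$-product depends only on the generic inclusion) and choosing $N$ divisible enough that the divisors $\frac{N}{n_i}\cdot D_i$ become Cartier on the merely $\bQ$-factorial $X$, are both sound and indeed address details the paper's proof passes over silently.
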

\begin{proof}
  Let $\sA \subseteq \Sym^{[1]}_{\cC} \Omega^p_X(\log D)$ be any given
  reflexive sheaf of rank one. In order to show that $\kappa_{\cC}(\sA) \leq
  p$, let $H \subset X$ be a general hyperplane section, and let $\gamma: Y
  \to X$ be an adapted cover with extra branching along $H$ and cyclic Galois
  group $G$, as in Proposition~\ref{prop:existencesofcovers} on
  page~\pageref{prop:existencesofcovers}.

  As a first step, we show that the pair $(Y, D_{\gamma})$ is log
  canonical. Since $H$ is general, \cite[5.17]{KM98} implies that
  $$
  \discrep(X, D+H) = \min \{ 0, \discrep(X,D)\}.
  $$
  Since $(X,D)$ is log canonical, $\discrep(X,D+H) \geq -1$, so $(X, D+H)$ is
  also log canonical. By \cite[2.27]{KM98}, the pair $\bigl( X, D+
  \frac{N-1}{N} H \bigr)$ is then log canonical as well, where $N$ is the
  least common multiple of those $\cC$-multiplicities that are not infinity,
  as in Proposition~\ref{prop:existencesofcovers}.  Next, recall from
  Lemma~\ref{lem:adjunction} that the log canonical divisor of $(Y,
  D_{\gamma})$ is expressed as follows,
  $$
  K_Y + D_{\gamma} = \gamma^*(K_X + D) + (N-1)\cdot H_{\gamma} = \gamma^*
  \left( K_X + D+ \frac{N-1}{N}H\right).
  $$
  Then since $(X, D+ \frac {N-1} N H)$ is log canonical, so is $(Y,
  D_{\gamma})$, \cite[5.20]{KM98}.

  As a next step, recall from Remark~\ref{rem:pbddbi} that the pull-back
  $\gamma^*(R_{\sA})$ of the defect divisor is an integral divisor on $Y$, and
  consider the sheaf
  $$
  \sB:= \bigl( \gamma^*(\sA) \otimes \sO_Y(\gamma^* R_{\sA}) \bigr)^{**}.
  $$
  We have seen in Corollary~\ref{cor:realizationofkappa} that $\kappa(\sB)
  \geq \kappa_{\cC}(\sA)$, and that there exists an inclusion
  $$
  \sB \into \Sym^{[1]} \Omega_Y^p(\log D_{\ga}) = \Omega_Y^{[p]}(\log
  D_{\ga}).
  $$
  If we show that $\sB$ is $\bQ$-Cartier, then the Bogomolov-Sommese Vanishing
  Theorem for log canonical threefold pairs, \cite[Thm.~1.4]{GKK08}, applies
  to show that $\kappa(\sB) \leq p$. This will yield the claim.  To show that
  $\sB$ is $\bQ$-Cartier, recall that $X$ is $\bQ$-factorial. Since $X$ is
  normal, and $\sA$ is reflexive of rank one, there exists a divisor $D$ on
  $X$ such that $\sA \cong \sO_X(D)$. It follows that
  $$
  \sB \cong \sO_Y\bigl(\gamma^*(D + R_{\sA})\bigr).
  $$
  Since a suitable multiple of the $\bQ$-divisor $D + R_{\sA}$ is Cartier, it
  follows that $\sB$ is $\bQ$-Cartier, as claimed. This ends the proof.
\end{proof}

Combining Propositions~\ref{prop:HNF} and \ref{prop:BSvan}, we obtain a useful
criterion that can be used to show that $\bQ$-Fano $\cC$-pairs $(X, D)$ with
ample anticanonical class $-(K_X+D)$ have Picard number $\rho(X) > 1$. This
will be an essential ingredient in the proof of Theorem~\ref{thm:main}.

\begin{cor}\label{cor:rhoX}
  Let $(X, D)$ be a $\cC$-pair, as in Definition~\ref{def:orbifold}.  Assume
  that $X$ is projective and $\bQ$-factorial, that $\dim X = n \leq 3$ and that
  the pair $(X,D)$ is log canonical.  Let $A$ be an ample Cartier divisor.  If
  $(K_X + D).A^{n-1} \leq 0$ and if there exists a number $m$ and a reflexive
  sheaf $\sA \subseteq \Sym^{[m]}_{\cC} \Omega^1_X(\log D)$ of rank one with
  $c_1(\sA).A^{n-1} > 0$, then $\rho(X)>1$.
\end{cor}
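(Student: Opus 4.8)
The plan is to argue by contradiction, combining Proposition~\ref{prop:HNF} with the Bogomolov-Sommese vanishing of Proposition~\ref{prop:BSvan}. Suppose $\rho(X) = 1$. The hypotheses assumed here are precisely those of Proposition~\ref{prop:HNF} (projectivity and $\bQ$-factoriality of $X$, the numerical condition $(K_X+D).A^{n-1} \leq 0$, and the existence of $\sA \subseteq \Sym^{[m]}_{\cC}\Omega^1_X(\log D)$ with $c_1(\sA).A^{n-1} > 0$), so that proposition produces a number $p$ with $1 \leq p < \dim X = n$ together with a reflexive rank-one sheaf $\sB \subset \Sym^{[1]}_{\cC}\Omega^p_X(\log D)$ satisfying $\Div_{\cC}(\sB).A^{n-1} > 0$.

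Next I would feed this sheaf $\sB$ into Proposition~\ref{prop:BSvan}. Since $(X,D)$ is log canonical, $X$ is projective and $\bQ$-factorial, and $\dim X \leq 3$, that proposition applies to the rank-one subsheaf $\sB \subseteq \Sym^{[1]}_{\cC}\Omega^p_X(\log D)$ and yields $\kappa_{\cC}(\sB) \leq p < n$. I would then translate this $\cC$-Kodaira-Iitaka dimension into an ordinary one: choosing $m \in \bN^+$ so that $m \cdot R_{\sB}$ is integral, Corollary~\ref{cor:kappaofample} gives $\kappa_{\cC}(\sB) = \kappa\bigl(m \cdot \Div_{\cC}(\sB)\bigr)$. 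Hence $\kappa\bigl(m \cdot \Div_{\cC}(\sB)\bigr) \leq p < n$.

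The contradiction comes from the positivity of $\Div_{\cC}(\sB)$ under the assumption $\rho(X) = 1$. In this case $N^1(X)_\bQ$ is spanned by the class of $A$, so $\Div_{\cC}(\sB) \equiv \lambda \cdot A$ for some $\lambda \in \bQ$. Intersecting with $A^{n-1}$ and combining $\Div_{\cC}(\sB).A^{n-1} > 0$ with $A^n > 0$ forces $\lambda > 0$. Thus $\Div_{\cC}(\sB)$ is numerically equivalent to a positive multiple of the ample class $A$; since $X$ is $\bQ$-factorial, $\Div_{\cC}(\sB)$ is $\bQ$-Cartier, hence ample, and in particular big. Therefore $\kappa\bigl(m \cdot \Div_{\cC}(\sB)\bigr) = n$, contradicting the bound $\kappa\bigl(m \cdot \Div_{\cC}(\sB)\bigr) < n$ obtained above. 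This forces $\rho(X) > 1$.

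I expect the only delicate points to be the bookkeeping that ensures Proposition~\ref{prop:HNF} delivers $p$ in the range $1 \leq p < n$ required by Proposition~\ref{prop:BSvan} (the lower bound $p \geq 1$ holding because the maximally destabilizing subsheaf constructed in that proof is nonzero), and the standard but essential input that a $\bQ$-Cartier divisor numerically proportional with positive factor to an ample class has maximal Kodaira-Iitaka dimension. Everything else is a formal consequence of the two propositions together with Corollary~\ref{cor:kappaofample}.
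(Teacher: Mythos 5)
Your proof is correct and follows essentially the same route as the paper: assume $\rho(X)=1$, obtain $\sB \subset \Sym^{[1]}_{\cC}\Omega^p_X(\log D)$ with $p<n$ from Proposition~\ref{prop:HNF}, conclude $\kappa_{\cC}(\sB) = n$ via $\bQ$-factoriality, ampleness of $\Div_{\cC}(\sB)$ and Corollary~\ref{cor:kappaofample}, and contradict the bound $\kappa_{\cC}(\sB) \leq p < n$ from Proposition~\ref{prop:BSvan}. You merely spell out in more detail the numerical step (writing $\Div_{\cC}(\sB) \equiv \lambda \cdot A$ with $\lambda > 0$) that the paper compresses into the assertion that $\sB$ is $\bQ$-Cartier and $\bQ$-ample.
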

\begin{proof}
  Suppose to the contrary that $\rho(X)=1.$ Given $\sA \subseteq
  \Sym^{[m]}_{\cC} \Omega^1_X(\log D)$ of rank one with $c_1(\sA).A^{n-1} >
  0$, let $\sB \subset \Sym^{[1]}_{\cC} \Omega^p_X(\log D)$ be the reflexive
  rank one sheaf constructed in Proposition~\ref{prop:HNF}, where $p < n$.
  The assumptions that $\rho(X)=1$ and $X$ is $\bQ$-factorial imply that $\sB$
  is $\bQ$-Cartier and a $\bQ$-ample sheaf of $p$-forms.  In particular, by
  Corollary \ref{cor:kappaofample}, $\kappa_{\cC}(\sB) = n$.  But by
  Proposition~\ref{prop:BSvan}, we know that $\kappa_{\cC}(\sB) \leq p < n$, a
  contradiction. It follows that $\rho(X)>1$.
\end{proof}

\part{PROOF OF CAMPANA'S CONJECTURE IN DIMENSION 3}\label{P3}

\section{Setup for the proof of Theorem~\ref*{thm:main}}
\label{sec:setup}

We prove Theorem~\ref{thm:main} in the remainder of the paper. The following
assumptions are maintained throughout the proof.

\begin{assumption}\label{ass:mainproof}
  Let $f^\circ: X^\circ \to Y^\circ$ be a smooth projective family of
  canonically polarized manifolds of relative dimension $n$, over a smooth
  quasi-projective base of dimension $\dim Y^\circ \leq 3$. We assume that the
  family is not isotrivial, $\Var(f^\circ) > 0$, and let $\mu: Y^\circ \to
  \mathfrak{M}$ be the associated map to the coarse moduli space, whose
  existence is shown, e.g. in \cite[Thm.~1.11]{V95}. Arguing by contradiction,
  we assume that $Y^\circ$ is a special variety.
\end{assumption}

\begin{rem}
  Since $Y^\circ$ is special, it is not of log-general type. By
  \cite[Thm.~1.1]{KK08c}, this already implies that the variation of $f^\circ$
  cannot be maximal, i.e., $\Var(f^\circ) < \dim Y^\circ$.
\end{rem}

We also fix a smooth projective compactification $Y$ of $Y^\circ$ such that $D
:= Y \setminus Y^\circ$ is a divisor with simple normal crossings.
Furthermore, we fix a compactification $\overline{\mathfrak{M}}$ of
$\mathfrak{M}$ and let $\mu^{(0)}: Y \dasharrow \overline{\mathfrak{M}}$ be
the associated rational map.

\section{Viehweg-Zuo sheaves on $(Y, D)$}
\label{sec:VZ}

\subsection{Existence of differentials coming from the moduli space}

Under the assumptions spelled out in Section~\ref{sec:setup}, Viehweg and Zuo
have shown in \cite[Thm.~1.4(i)]{VZ02} that $Y^\circ$ carries many logarithmic
pluri-differentials. More precisely, they prove the fundamental result that
there exists a number $m > 0$ and an invertible sheaf $\sA \subseteq \Sym^m
\Omega^1_Y (\log D)$ whose Kodaira-Iitaka dimension is at least the variation
of the family, $\kappa(\sA) \geq \Var(f^\circ)$.

We recall a refinement of Viehweg and Zuo's theorem which asserts that the
``Viehweg-Zuo sheaf'' $\sA$ really comes from the coarse moduli space
$\mathfrak M$. To formulate this result precisely, we use the following
notation.

\begin{notation}\label{not:introB}
  Consider the subsheaf $\sB \subseteq \Omega^1_Y(\log D)$, defined on
  presheaf level as follows: if $U \subset Y$ is any open set and $\sigma \in
  \Gamma\bigl(U,\, \Omega^1_Y (\log D) \bigr)$ any section, then $\sigma \in
  \Gamma\bigl(U,\, \sB \bigr)$ if and only if the restriction $\sigma|_{U'}$
  is in the image of the differential map $d\mu|_{U'} : \mu^* \bigl(
  \Omega^1_{\mathfrak M}\bigr)|_{U'} \to \Omega^1_{U'}$, where $U' \subseteq
  U\cap Y^\circ$ is the open subset where the moduli map $\mu$ has maximal
  rank.
\end{notation}
\begin{rem}
  By construction, it is clear that the sheaf $\sB$ is a saturated subsheaf of
  $\Omega^1_Y (\log D)$. We say that $\sB$ is the saturation of $\Image(d\mu)$
  in $\Omega^1_Y(\log D)$.
\end{rem}

The refinement of Viehweg-Zuo's result is then formulated as follows.

\begin{thm}[\protect{Existence of differentials coming from the moduli space, \cite[Thm.~1.5]{JK09}}]\label{thm:VZimproved} 
  There exists a number $m > 0$ and an invertible subsheaf $\sA \subseteq
  \Sym^m \sB$ whose Kodaira-Iitaka dimension is at least the variation of the
  family, $\kappa(\sA) \geq \Var(f^\circ)$.\qed
\end{thm}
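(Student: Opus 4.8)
The plan is to deduce the statement from the fundamental positivity result of Viehweg and Zuo, \cite[Thm.~1.4(i)]{VZ02}, by inspecting the way in which their invertible sheaf $\sA$ is mapped into $\Sym^m \Omega^1_Y(\log D)$, and showing that this map in fact factors through $\Sym^m \sB$. The positivity statement $\kappa(\sA) \geq \Var(f^\circ)$ is already supplied by \cite{VZ02}, so the only new content is the refinement that replaces $\Omega^1_Y(\log D)$ by $\sB$. The conceptual point is that the Viehweg--Zuo embedding is built entirely out of iterated Kodaira--Spencer maps, and the Kodaira--Spencer map of $f^\circ$ factors through the differential $d\mu$ of the moduli map. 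Thus every copy of $\Omega^1_Y(\log D)$ appearing in the construction is forced to take values in the image of $d\mu$, that is, in $\sB$.

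To set this up, I would first recall that, after passing to a suitable birational model, the family gives rise to a polarized variation of Hodge structure on $R^n f^\circ_* \mathbb{C}$ together with its Hodge filtration $F^\bullet$ and a Higgs field $\theta : F^q/F^{q+1} \to \bigl(F^{q-1}/F^q\bigr) \otimes \Omega^1_Y(\log D)$. Viehweg and Zuo produce the sheaf $\sA$ of Kodaira--Iitaka dimension $\geq \Var(f^\circ)$ together with a nonzero map into the top graded piece, and their embedding $\sA \hookrightarrow \Sym^m \Omega^1_Y(\log D)$ is obtained, up to an appropriate untwisting, by composing this map with the $m$-fold iterate of $\theta$,
\[
\theta^m : F^n \longrightarrow F^{n-m} \otimes \Sym^m \Omega^1_Y(\log D).
\]
The first step is therefore to isolate from \cite{VZ02} precisely this factorization of the embedding through the iterated Higgs field.

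The heart of the argument is the observation that $\theta$ factors through $d\mu$. By construction $\theta$ is $\mathcal{O}_Y$-dual to cup product with the Kodaira--Spencer class, so the adjoint map $T_Y(-\log D) \otimes F^q/F^{q+1} \to F^{q-1}/F^q$ factors through the Kodaira--Spencer map $\rho : T_Y(-\log D) \to R^1 f_* T_{X/Y}(-\log)$. Over the open set $U' \subseteq Y^\circ$ where $\mu$ has maximal rank, $\rho$ equals $\mu^*(\rho_{\mathfrak{M}}) \circ d\mu$, where $\rho_{\mathfrak{M}}$ is the Kodaira--Spencer map of the universal family and is injective because canonically polarized manifolds have injective infinitesimal deformation map; hence $\ker \rho = \ker d\mu$ on $U'$. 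Dualizing, the $\Omega^1$-valued map $\theta$ has image contained in $\bigl(F^{q-1}/F^q\bigr) \otimes \sB$ on $U'$, since $\sB$ is exactly the annihilator of $\ker d\mu$, i.e.~the saturation of $\Image(d\mu)$ as in Notation~\ref{not:introB}. Applying this at each of the $m$ stages of $\theta^m$ shows that the Viehweg--Zuo embedding lands in $\Sym^m \sB$ over $U'$.

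Finally I would globalize. Since $\sB$ is saturated in $\Omega^1_Y(\log D)$, Proposition~\ref{prop:satandsym} shows that $\Sym^{[m]} \sB$ is saturated in $\Sym^{[m]} \Omega^1_Y(\log D) = \Sym^m \Omega^1_Y(\log D)$, so the torsion-freeness of the quotient forces the inclusion $\sA|_{U'} \subseteq \Sym^m \sB|_{U'}$, valid over the dense open set $U'$, to extend to an inclusion $\sA \subseteq \Sym^m \sB$ over all of $Y$. The main obstacle is the boundary and degeneration bookkeeping hidden in the first step: one must verify that the Viehweg--Zuo comparison maps, the Higgs field, and the factorization through $\rho$ all extend with at most logarithmic poles across $D$ and across the locus where $\mu$ drops rank. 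Controlling these comparison maps over the bad locus --- rather than the factorization itself, which is essentially linear algebra --- is where the real work lies.
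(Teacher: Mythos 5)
You should first note how the paper itself handles this statement: it offers no proof at all, quoting the result verbatim from \cite[Thm.~1.5]{JK09} (hence the \qed in the statement). Your outline is in fact a reconstruction of the strategy of that reference: trace through the Viehweg--Zuo construction, observe that the embedding $\sA \into \Sym^m\Omega^1_Y(\log D)$ is assembled from iterated Higgs/Kodaira--Spencer maps, show each stage takes values in $\sB$, and then use saturatedness to promote a generic inclusion to a global one. Your final globalization step is correct exactly as written: $\sB$ is saturated, Proposition~\ref{prop:satandsym} makes $\Sym^{[m]}\sB = \Sym^m\sB$ saturated in $\Sym^m\Omega^1_Y(\log D)$, and a map from the line bundle $\sA$ to the torsion-free quotient vanishing on the dense open set $U'$ vanishes identically.

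The genuine gap is in your central step, the factorization $\rho = \mu^*(\rho_{\mathfrak M})\circ d\mu$ ``where $\rho_{\mathfrak M}$ is the Kodaira--Spencer map of the universal family'': there is no universal family over the coarse moduli space $\mathfrak M$. Canonically polarized manifolds generally have nontrivial finite automorphism groups, so $\mathfrak M$ only coarsely represents the moduli functor, and it may moreover be singular, so the deformation theory of $f^\circ$ does not literally factor through $\Omega^1_{\mathfrak M}$; the object $\rho_{\mathfrak M}$ you invoke, and its injectivity, are not available as stated. What is both true and sufficient is only the inclusion $\ker d\mu \subseteq \ker\rho$ at general points of $U'$ (you never need the equality, so the injectivity claim is superfluous anyway), and this requires a different argument: at a general point a vector killed by $d\mu$ is tangent to a fiber of $\mu$, all fibers of $f^\circ$ over such a $\mu$-fiber are isomorphic because they define the same moduli point, hence by the Fischer--Grauert theorem the restricted family is locally analytically trivial and its Kodaira--Spencer classes vanish in those directions --- equivalently one argues on the moduli \emph{stack}, to which $Y^\circ$ genuinely maps. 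This is precisely how \cite{JK09} bridges the point. A secondary imprecision: in \cite{VZ02} the embedding does not arise from the Hodge-theoretic Higgs field on $R^nf^\circ_*\mathbb{C}$ of the family itself, but from the ``deformation'' Higgs sheaves built from $R^qf_*\bigl(\wedge^p T_{X/Y}(-\log)\bigr)$ compared, via a cyclic covering trick, with the variation of Hodge structure of an auxiliary family; since every Higgs field occurring is still cup product with a Kodaira--Spencer class, your factorization argument does apply stage by stage, but it must be verified for all the comparison maps of the construction as well --- which, consistent with your closing caveat, is where most of the actual work in \cite{JK09} lies.
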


\subsection{Pushing down Viehweg-Zuo sheaves}

In the course of the proof, we will often need to compare Viehweg-Zuo sheaves
on different birational models of a given pair.  The following elementary
lemma shows that Viehweg-Zuo sheaves can be pushed down to minimal models, and
that the Kodaira-Iitaka dimension does not decrease in the process.

\begin{lem}\label{VZbiratl}
  Let $(Z,\Delta)$ be a $\cC$-pair and $\sA \subseteq \Sym_{\cC}^{[m]}
  \Omega_Z^p(\log \Delta)$ a reflexive rank one sheaf for some $m$, $p >0$.
  Let $\lambda: Z \dashrightarrow Z'$ be a birational map whose inverse image
  does not contract any divisor.  If $Z'$ is normal and $\Delta'$ is the
  cycle-theoretic image of $\Delta$, then there exists a reflexive rank one
  sheaf $\sA' \subseteq \Sym_{\cC}^{[m]} \Omega_{Z'}^p(\log \Delta')$ with
  $\kappa_{\cC}(\sA') \geq \kappa_{\cC}(\sA).$
\end{lem}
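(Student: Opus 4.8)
The plan is to exploit that $\lambda$ is an isomorphism in codimension one as seen from $Z'$, transport $\sA$ across the common open set, and extend reflexively; throughout I take $Z$ and $Z'$ projective, as is implicit in the use of $\kappa_{\cC}$. First I would fix the largest open set $W \subseteq Z$ on which $\lambda$ restricts to an isomorphism onto an open set $W' := \lambda(W) \subseteq Z'$. The key geometric observation is that the hypothesis that $\lambda^{-1}$ contracts no divisor forces $\codim_{Z'}(Z' \setminus W') \geq 2$: the indeterminacy locus of $\lambda^{-1}$ has codimension $\geq 2$ because $Z'$ is normal, and on its complement $\lambda^{-1}$ is a birational morphism to $Z$ whose exceptional locus has codimension $\geq 2$ precisely because no divisor is contracted. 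By contrast $Z \setminus W$ may well contain divisors, namely those contracted by $\lambda$; this is the source of the asymmetry and the reason only an inequality can be expected. Since $\lambda|_W$ is an isomorphism, it identifies $\Delta|_W$ with $\Delta'|_{W'}$ together with their $\cC$-multiplicities, because $\Delta'$ is the cycle-theoretic image of $\Delta$ and the contracted components of $\Delta$ miss $W$ generically.

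Next I would define $\sA'$. Transporting $\sA|_W$ through the isomorphism $\lambda|_W$ yields a rank one sheaf on $W'$; since $Z' \setminus W'$ has codimension $\geq 2$, I set $\sA' := j_*\bigl((\lambda|_W)_*(\sA|_W)\bigr)$, where $j: W' \into Z'$, and this is reflexive of rank one. The inclusion $\sA|_W \subseteq \Sym^{[m]}_{\cC}\Omega^p_W(\log \Delta|_W)$ transports to an inclusion on $W'$; applying $j_*$ and using that $\Sym^{[m]}_{\cC}\Omega^p_{Z'}(\log \Delta')$ is reflexive by Corollary~\ref{cor:reflexivity}, hence equal to $j_*$ of its own restriction to $W'$, produces the required inclusion $\sA' \subseteq \Sym^{[m]}_{\cC}\Omega^p_{Z'}(\log \Delta')$.

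For the Kodaira-Iitaka estimate, I would first note that the formation of the $\cC$-product $\Sym^{[k]}_{\cC}(-)$ in Notation~\ref{not:orbifoldsaturation} is defined by saturation inside the ambient sheaf of $\cC$-differentials and is therefore local, so that restriction to the common open set gives a canonical isomorphism $\Sym^{[k]}_{\cC}\sA|_W \cong \Sym^{[k]}_{\cC}\sA'|_{W'}$ for every $k$. Because these $\cC$-products are reflexive and $Z' \setminus W'$ has codimension $\geq 2$, restriction to $W$ followed by reflexive extension over $Z'$ yields injections $r_k : H^0\bigl(Z,\,\Sym^{[k]}_{\cC}\sA\bigr) \into H^0\bigl(Z',\,\Sym^{[k]}_{\cC}\sA'\bigr)$; in particular every $k$ contributing a nonzero space on $Z$ still contributes on $Z'$. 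Finally I would compare the associated rational maps of Definition~\ref{def:okodairaiitaka}: writing $V_k$ for the image of $r_k$, the subsystem $V_k$ defines a map $\psi_k: Z' \dashrightarrow \P(V_k^\vee)$ with $\psi_k \circ \lambda = \phi_k$, so that $\overline{\phi_k(Z)} \subseteq \overline{\psi_k(Z')}$; since $\psi_k$ is the composition of the full map $\phi'_k$ with a linear projection, $\dim \overline{\phi'_k(Z')} \geq \dim \overline{\psi_k(Z')} \geq \dim \overline{\phi_k(Z)}$. Taking the maximum over $k$ gives $\kappa_{\cC}(\sA') \geq \kappa_{\cC}(\sA)$, the case $\kappa_{\cC}(\sA) = -\infty$ being trivial.

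The main obstacle I expect is the geometric codimension statement in the first step: pinning down that ``$\lambda^{-1}$ contracts no divisor'' genuinely yields $\codim_{Z'}(Z' \setminus W') \geq 2$, and simultaneously that $\Delta'|_{W'}$ carries exactly the same $\cC$-multiplicities as $\Delta|_W$. Once this is secured, everything downstream — the reflexive extension of both the sheaf inclusion and of global sections, and the matching of $\cC$-products on the overlap — follows routinely from reflexivity and from the fact that reflexive sheaves are determined in codimension one.
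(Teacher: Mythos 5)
Your proposal is correct and follows essentially the same route as the paper's proof: identify (via normality of $Z'$ and the no-contracted-divisor hypothesis, i.e.\ Zariski's main theorem) an open set $U \subseteq Z'$ with complement of codimension $\geq 2$ over which $\lambda^{-1}$ is an isomorphism onto an open subset of $Z$, define $\sA'$ as the reflexive (pushforward) extension of the transported sheaf, and use that the $\cC$-products $\Sym^{[k]}_{\cC}$ commute with restriction to deduce $h^0\bigl(Z',\, \Sym^{[k]}_{\cC}\sA'\bigr) \geq h^0\bigl(Z,\, \Sym^{[k]}_{\cC}\sA\bigr)$ for all $k$. Your explicit comparison of the rational maps $\phi_k$ and $\phi'_k$ in the last step is a slightly more careful spelling-out of the paper's terse ``hence $\kappa_{\cC}(\sA') \geq \kappa_{\cC}(\sA)$,'' but it is the same argument in substance.
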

\begin{subrem}
  Since $\lambda$ is birational, it is clear that any number which appears as
  a coefficient in the divisor $\Delta'$, also appears as a coefficient in
  $\Delta$. Consequently, $(Z', \Delta')$ is again a $\cC$-pair.
\end{subrem}
\begin{proof}[Proof of Lemma~\ref{VZbiratl}]
  The assumption that $\lambda^{-1}$ does not contract any divisor and the
  normality of $Z'$ guarantee that $\lambda^{-1}: Z' \dasharrow Z$ is a
  well-defined embedding over an open subset $U \subseteq Z'$ whose complement
  $\overline Z' := Z' \setminus U$ has codimension $\codim_{Z'} \overline Z'
  \geq 2$, cf.~Zariski's main theorem \cite[V~5.2]{Ha77}.  In particular,
  $\Delta'|_U = \bigl( \lambda^{-1}|_U \bigr)^{-1}(\Delta)$. Let $\iota: U
  \into Z'$ denote the inclusion and set $\sA' := \iota_* \bigl(
  (\lambda^{-1}|_U)^* \sA \bigr)$. Since $\codim_{Z'} \overline Z' \geq 2$,
  the sheaf $\sA'$ is reflexive and agrees with $\sA$ on the open set where
  $\lambda^{-1}$ is an isomorphism. By reflexivity, we obtain an inclusion of
  sheaves, $\sA' \subseteq \Sym_{\cC}^{[m]} \Omega^1_{Z'}(\log
  \Delta')$. Likewise, we obtain that $\Sym_{\cC}^{[d]}\sA' \cong \iota_*
  \bigl( (\lambda^{-1}|_U)^* \Sym_{\cC}^{[d]} \sA \bigr)$ for all $d >
  0$. This gives $h^0\bigl(Z',\, \Sym_{\cC}^{[d]} \sA'\bigr) \geq
  h^0\bigl(Z,\, \Sym_{\cC}^{[d]} \sA \bigr)$ for all $d$, hence
  $\kappa_{\cC}(\sA') \geq \kappa_{\cC}(\sA)$.
\end{proof}

As an immediate corollary, we get that the property of being special is
inherited by preimages of birational morphisms of pairs.

\begin{cor}\label{cor:specialbiratl}
  Let $(Z, \Delta)$ be a $\cC$-pair, and let $\lambda: Z \dasharrow Z'$ be a
  birational morphism whose inverse does not contract a divisor. Assume that
  $Z'$ is normal, and let $\Delta'$ be the cycle-theoretic image of $\Delta$.
  If the $\cC$-pair $(Z', \Delta')$ is special in the sense of
  Definition~\ref{def:specialingen}, and if $E \subset Z$ is any
  $\lambda$-exceptional effective $\mathbb Q$-divisor such that $(Z,
  \Delta+E)$ is a $\cC$-pair, then $(Z, \Delta + E)$ is also special.
\end{cor}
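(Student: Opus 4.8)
The plan is to argue by contradiction, reducing the statement to a single application of Lemma~\ref{VZbiratl}. Suppose that $(Z, \Delta+E)$ were \emph{not} special. Then, by Definition~\ref{def:specialingen}, there would exist a number $1 \leq p \leq \dim Z$ and a saturated rank-one sheaf $\sF \subseteq \Sym^{[1]}_{\cC}\Omega^p_Z(\log(\Delta+E))$ with $\kappa_{\cC}(\sF) \geq p$. I would push this sheaf down to $Z'$ and derive a contradiction to the assumed specialness of $(Z',\Delta')$.

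First I would verify that the hypotheses of Lemma~\ref{VZbiratl} are met for the $\cC$-pair $(Z, \Delta+E)$ and the map $\lambda$. By assumption $Z'$ is normal and $\lambda^{-1}$ contracts no divisor, so the only point requiring a word is the identification of the cycle-theoretic image of the $\cC$-divisor $\Delta+E$. This is exactly where the hypothesis that $E$ is $\lambda$-exceptional is used: every component of $E$ is contracted by $\lambda$ and therefore contributes nothing to the cycle-theoretic image, so the image of $\Delta+E$ coincides with the cycle-theoretic image of $\Delta$, which is $\Delta'$ by hypothesis. Consequently $(Z, \Delta+E)\dasharrow(Z',\Delta')$ falls within the setup of Lemma~\ref{VZbiratl}.

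Next I would invoke Lemma~\ref{VZbiratl} with $m=1$ applied to the pair $(Z, \Delta+E)$ and the sheaf $\sF$, obtaining a reflexive rank-one sheaf $\sF' \subseteq \Sym^{[1]}_{\cC}\Omega^p_{Z'}(\log \Delta')$ with $\kappa_{\cC}(\sF') \geq \kappa_{\cC}(\sF) \geq p$. Passing, if necessary, to the saturation of $\sF'$ inside $\Sym^{[1]}_{\cC}\Omega^p_{Z'}(\log\Delta')$ —which enlarges the sheaf and hence does not decrease its $\cC$-Kodaira-Iitaka dimension— I obtain a \emph{saturated} rank-one sheaf of $p$-forms on $Z'$ with $\kappa_{\cC} \geq p$. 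This directly contradicts the specialness of $(Z',\Delta')$, and so $(Z,\Delta+E)$ must be special.

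The argument is essentially bookkeeping on top of Lemma~\ref{VZbiratl}; the only substantive step, and the one I would state with care, is the verification that the cycle-theoretic image of $\Delta+E$ is again $\Delta'$, i.e.\ that adjoining the $\lambda$-exceptional divisor $E$ upstairs does not alter the pushed-down boundary. Once this is in place, the conclusion is a one-line combination of Lemma~\ref{VZbiratl} with Definition~\ref{def:specialingen}.
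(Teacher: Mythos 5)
Your proposal is correct and follows essentially the same route as the paper: a single application of Lemma~\ref{VZbiratl} with $m=1$ to the pair $(Z,\Delta+E)$, using that the cycle-theoretic image of $\Delta+E$ is $\Delta'$ because $E$ is $\lambda$-exceptional, then invoking specialness of $(Z',\Delta')$; the paper phrases this directly (any rank-one $\sA \subseteq \Sym^{[1]}_{\cC}\Omega^p_Z(\log(\Delta+E))$ satisfies $\kappa_{\cC}(\sA) \leq \kappa_{\cC}(\sA') < p$) rather than by contradiction, which is a purely cosmetic difference. Your explicit verification that pushing forward kills $E$, and your remark that saturating the pushed-down sheaf does not decrease $\kappa_{\cC}$, are both points the paper leaves implicit, and they are handled correctly.
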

\begin{proof}
  Let $\sA \subseteq \Sym_{\cC}^{[1]} \Omega_Z^p(\log \Delta +E)$ be a
  reflexive rank one sheaf for some $p >0$.  Then by Lemma \ref{VZbiratl},
  there exists a reflexive rank one sheaf $\sA' \subseteq \Sym_{\cC}^{[m]}
  \Omega_{Z'}^p(\log \Delta')$ with $\kappa_{\cC}(\sA') \geq
  \kappa_{\cC}(\sA)$. But since $(Z', \Delta')$ is special, we have that $p >
  \kappa_{\cC}(\sA') \geq \kappa_{\cC}(\sA)$.
\end{proof}

\section{Simplification: factorization of the moduli map}
\label{sec:reduction}

In order to simplify the setup of the proof, we aim to replace the pair $(Y,
D)$ with a pair that is somewhat easier to manage. To this end, we will now
construct a commutative diagram of morphisms between normal varieties,
$$
\xymatrix{ Y \ar@{-->}[d]_{\mu^{(0)}}^{\txt{\scriptsize moduli\\\scriptsize
      map}} && Y^{(1)} \ar[ll]_{\alpha_1} \ar[d]_{\mu^{(1)}}^{\txt{\scriptsize
      conn.\\\scriptsize fibers}} && Y^{(2)} \ar[ll]_{\alpha_2}
  \ar[d]_{\mu^{(2)}}^{\txt{\scriptsize equidim.\\\scriptsize fibers}} &&
  Y^{(3)} \ar[ll]_{\alpha_3} \ar[d]_{\mu^{(3)}}^{\txt{\scriptsize
      equidim.\\\scriptsize fibers}} &&
  Y^{(4)} \ar[ll]_{\alpha_4} \ar@/^0.75pc/[lld]^{\mu^{(4)}} \\
  \overline{\mathfrak M} && Z^{(1)} \ar[ll]^{\beta_1} && Z^{(2)}
  \ar[ll]^{\beta_2} && Z^{(3)}
  \ar[ll]_{\beta_3}^{\txt{\scriptsize discr.~locus\\\scriptsize becomes snc}}, \\
}
$$
where $\beta_2$, $\beta_3$ and all $\alpha_{i}$ are birational morphisms, and
where $Z^{(3)}$ and $Y^{(4)}$ are smooth.

\subsection{Construction of $\boldsymbol{\mu^{(1)}}$ and $\boldsymbol{Z^{(1)}}$}

If necessary, blow up $Y$ outside of $Y^\circ$, in order to obtain a variety
$Y^{(1)}$ which is smooth and where the associated map $Y^{(1)} \dasharrow
\overline{\mathfrak M}$ becomes a morphism. The factorization via a normal
space $Z^{(1)}$ is then obtained by Stein factorization.

\subsection{Construction of $\boldsymbol{Z^{(2)}}$ and $\boldsymbol{Y^{(2)}}$ }

The map $\mu^{(1)}$ induces a natural, generically injective map from
$Z^{(1)}$ into the Chow variety of $Y^{(1)}$,
$$
\gamma: Z^{(1)} \dasharrow \Chow\bigl(Y^{(1)}\bigr), \quad z \mapsto
(\mu^{(1)})^{-1}(z).
$$
Consider a blow-up $\beta_2 : Z^{(2)} \to Z^{(1)}$ such that the composition
$\gamma \circ \beta_2: Z^{(2)} \dasharrow \Chow\bigl(Y^{(1)}\bigr)$ becomes a
morphism and such that $Z^{(2)}$ is smooth. Let $Y^{(2)}$ be the normalization
of the pull-back of the universal family over
$\Chow\bigl(Y^{(1)}\bigr)$. Since the normalization morphism is finite, the
fiber dimension does not change, and the resulting map $\mu^{(2)}$ will have
connected fibers, all of pure dimension $\dim Y^{(2)}-\dim Z^{(2)}$.

\subsection{Construction of $\boldsymbol{Z^{(3)}}$ and $\boldsymbol{Y^{(3)}}$}

Set $D^{(2)} := \supp \bigl(\alpha_1^{-1}\circ \alpha_2^{-1}(D) \bigr)$.
Decompose $D^{(2)}$ into ``horizontal'' components that dominate $Z^{(2)}$ and
``vertical'' components that do not,
$$
D^{(2)} := D^{(2,h)} \cup D^{(2,v)},
$$
and set $D_Z := \mu^{(2)}(D^{(2,v)})$. Further, let $\Delta^{(2)} \subset
Z^{(2)}$ be the discriminant locus of $\mu^{(2)}$. Recall from
Notation~\ref{not:logdiscr} that this is the smallest closed subset such that
$\mu^{(2)}$ is smooth over $Z^{(2)} \setminus \Delta^{(2)}$, and such that the
scheme-theoretic intersection $D^{(2)}\cap (\mu^{(2)})^{-1}(z)$ is a proper
snc divisor in the fiber $(\mu^{(2)})^{-1}(z)$, for all $z \in Z^{(2)}
\setminus \Delta^{(2)}$. Let $\beta_3 : Z^{(3)} \to Z^{(2)}$ be a blow-up such
that $Z^{(3)}$ is smooth, and the preimages $\beta_3^{-1}(\Delta^{(2)})$,
$\beta_3^{-1}(D_Z)$ and $\beta_3^{-1}(\Delta^{(2)} \cup D_Z)$ are all divisors
with snc support. Let $Y^{(3)}$ be the normalization of $Y^{(2)}
\times_{Z^{(2)}} Z^{(3)}$. The induced morphism $\mu^{(3)}$ will again have
connected, equidimensional fibers of pure dimension.  Finally, set
$\Delta^{(3)} := \supp \beta_3^{-1}(\Delta^{(2)} \cup D_Z)$.

\subsection{Construction of $\boldsymbol{Y^{(4)}}$}

Set $D^{(3)} := \supp (\alpha_1 \circ \alpha_2 \circ \alpha_3 )^{-1}(D)$, let
$\alpha_4 : Y^{(4)} \to Y^{(3)}$ be a log resolution of the pair
$\bigl(Y^{(3)}, D^{(3)} \bigr)$ and set $D^{(4)} := \alpha_4^{-1} \bigl(
D^{(3)} \bigr)$.

\subsection{Extension of the boundary}

If $\alpha := \alpha_1 \circ \alpha_3 \circ \alpha_3 \circ \alpha_4$, we
obtain a birational morphism $\alpha : Y^{(4)} \to Y$ where $D =
\alpha_*(D^{(4)})$. The obvious fiber product yields a family of canonically
polarized varieties over $Y^{(4)} \setminus D^{(4)}$ such that $\mu^{(4)}$
factors the moduli map, and such that the associated map $Z^{(3)} \to
\overline{\mathfrak M}$ is generically finite.

To simplify the argumentation further and to define a meaningful $\cC$-base of
the fibration $\mu^{(4)}$, we will now extend the boundary $D^{(4)}$
slightly. To this end, let
$$
E^{(4)} \subseteq (\mu^{(4)})^{-1}(\Delta^{(3)})
$$
be the union of the irreducible components $E' \subseteq (\mu^{(4)})^{-1}
(\Delta^{(3)})$ which are $\alpha_4$-exceptional and not contained in
$D^{(4)}$. By definition of log-resolution, the logarithmic pair $(Y^{(4)},
D^{(4)}+E^{(4)} \bigr)$ is snc, and Corollary~\ref{cor:specialbiratl} asserts
that the pair is special.

\begin{rem}\label{rem:wasserstoff}
  Since $\mu^{(3)}$ is equidimensional, any $\alpha_4$-exceptional divisor is
  also $\mu^{(4)}$-exceptional. By construction of $E^{(4)}$, this implies
  that any $\mu^{(4)}$-exceptional divisor is contained in $D^{(4)}+E^{(4)}$.
\end{rem}

\subsection{Summary, Simplification}
\label{ssec:ss}

Replacing $(Y, D)$ by the pair $(Y^{(4)}, D^{(4)}+E^{(4)})$, if necessary, we
can assume without loss of generality for the remainder of the proof that the
following holds.
\begin{enumerate}
\item The moduli map $\mu^\circ: Y^\circ \to \mathfrak M$ extends to a
  morphism $\mu: Y \to \overline{\mathfrak M}$.
\item There exists a morphism $\pi: Y \to Z$ to a smooth variety $Z$ of
  positive dimension which factors the moduli map as follows
  $$
  \xymatrix{ Y \ar[rr]^{\pi}_{\txt{\scriptsize conn.~fibers}}
    \ar@/^1pc/[rrrr]^{\mu} && Z \ar[rr]_{\txt{\scriptsize generically finite}}
    && \overline{\mathfrak M}.  }
  $$
\item\ilabel{sauerstoff} If $E \subset Y$ is a divisor with $\codim_Z \pi(E)
  \geq 2$, then $E \subseteq D$.
\item\ilabel{fourbar} There exists an snc divisor $\Delta_{\red} \subset Z$
  such that for any point $z \in Z \setminus \Delta_{\red}$, the fiber $Y_z :=
  \pi^{-1}(z)$ is smooth, not contained in $D$, and the scheme-theoretic
  intersection $Y_z \cap D$ is a reduced snc divisor in $Y_z$.
\end{enumerate}

\begin{rem}
  Condition~\iref{fourbar} guarantees that the codimension-one part of the
  discriminant locus of $\pi$ is an snc divisor in $Z$. Together with
  Remark~\ref{rem:wasserstoff} or Condition~\iref{sauerstoff}, this implies
  that the morphism $\pi$ satisfies the Assumptions of ~\ref{ass:fibration}, which
  guarantee the existence of a $\cC$-base with good pull-back and push-forward
  properties for $\cC$-differentials. We are therefore free to use the results
  of Sections~\ref{ssec:pbbf} and \ref{ssec:pfbf} in our setting.
\end{rem}

\section{Proof of Theorem~\ref*{thm:main}}
\label{sec:Pf}

Let $(Z, \Delta)$ be the $\cC$-base of the fibration $\pi$, as constructed in
Section~\ref{sec:ofibr}, Construction~\ref{cons:cdefn}, and note that $\dim Z \leq 2.$
By construction, it is
clear that $\supp(\Delta) \subseteq \Delta_{\red}$, where $\Delta_{\red}
\subset Z$ is the divisor introduced in Section~\ref{ssec:ss} above. In
particular, the divisor $\Delta$ has snc support, and the pair $(Z, \Delta)$
is dlt, \cite[Cor.~2.35 and Def.~2.37]{KM98}. Since the logarithmic pair
$(Y,D)$ is special by assumption, Corollary~\ref{cor:baseofspecialisspecial}
implies that $(Z, \Delta)$ is a special $\cC$-pair in the sense of
Definition~\ref{def:specialingen}.

Next, let $\sB \subseteq \Omega^1_Y(\log D)$ be the sheaf introduced in
Notation~\ref{not:introB} above.  By Theorem~\ref{thm:VZimproved} there exists
an invertible, saturated sheaf
$$
\sA \subseteq \Sym^m \sB \subseteq \Sym^m \Omega_Y^1(\log D)
$$
with $\kappa(\sA) \geq \Var(f^\circ) = \dim Z$. Since $Z$ is generically
finite over $\overline{\mathfrak M}$, the sheaf $\sB$ is also the saturation
of the image of
$$
d\pi: \pi^*(\Omega^1_Z) \to \Omega^1_Y(\log D).
$$
Corollary~\ref{cor:pdvzs} thus asserts that $\sA$ descends to a reflexive
subsheaf $\sA_Z \subseteq \Sym_{\cC}^{[m]}\OM_Z^1(\log \Delta)$ of rank one,
with $\kappa_{\cC}(\sA_Z)= \dim Z$.

\subsection{Case: $\boldsymbol{\dim Z = 1}$.}  

Since $Z$ is a curve, $\Sym_{\cC} ^{[m]} \Omega_Z^1(\log \Delta)$ is of rank
one and therefore equals $\sA_Z$. Recall from Remark~\ref{rem:iokuop} that
this asserts that $\kappa_{\cC}\bigl(\Sym_{\cC} ^{[1]} \Omega_Z^1(\log
\Delta)\bigr) =1$, contradicting the fact that the $\cC$-pair $(Z, \Delta)$ is
special. This ends the proof in case $\dim Z = 1$.

\subsection{Case: $\boldsymbol{\dim Z = 2}$.}

Applying the the minimal model program to the dlt pair $(Z, \Delta)$, we
obtain a birational morphism\footnote{Since $Z$ is a surface, the minimal
  model program does not involve flips.} $\lambda: Z \to Z_{\lambda}$. Set
$\Delta_\lambda := \lambda_*(\Delta)$, and recall that $Z_{\lambda}$ is
$\bQ$-factorial, that the pair $(Z_{\lambda}, \Delta_{\lambda})$ is dlt and
that it does not admit divisorial contractions.

Let $\sA_{\lambda} \subset \Sym_{\cC}^{[m]} \OM_{Z_{\lambda}}^1(\log
\Delta_{\lambda})$ be the Viehweg-Zuo sheaf associated to $\sA_Z \subset
\Sym_{\cC} ^{[m]} \OM_{Z}^1(\log \Delta)$, as given by Lemma~\ref{VZbiratl},
and note that $\kappa_{\cC}(\sA_{\lambda})= \dim Z=2$.  For convenience of
argumentation, we consider the possibilities for $\kappa(K_Z + \Delta)$
separately.

\subsubsection{Sub-case: $\kappa(K_Z + \Delta) = -\infty$}
\label{ssec:kappaminfty}

In this case, the pair $(Z_{\lambda}, \Delta_{\lambda})$ is either $\mathbb
Q$-Fano and has Picard number $\rho(Z_{\lambda}) = 1$, or $(Z_{\lambda},
\Delta_{\lambda})$ admits an extremal contraction of fiber type and has the
structure of a proper Mori fiber space.

The case $\rho(Z_{\lambda}) = 1$, however, is ruled out by
Corollary~\ref{cor:rhoX}: if $\rho(Z_{\lambda}) = 1$, then $K_{Z_\lambda} +
\Delta_\lambda$ is anti-ample. If $A \subset Z_{\lambda}$ is a general
hyperplane section, this gives $(K_{Z_\lambda} + \Delta_\lambda).A <
0$. Corollary~\ref{cor:rhoX} then asserts that $\rho(Z_{\lambda}) > 1$,
contrary to our assumption.

We thus obtain that $\rho(Z_{\lambda}) > 1$, and that there exists a
fiber-type contraction $\pi: Z_{\lambda} \to B$, where $B$ is a curve. If $F$
is a general fiber of $\pi$, then $F \iso \P^1$, $F$ is entirely contained in
the snc locus of $(Z_\lambda, \Delta_\lambda)$, and $F$ intersects
$\Delta_\la$ transversely. Since the normal bundle $N_{F/Z_{\lambda}}$ is
trivial and $-(K_F + \Delta_\lambda|_F)$ is nef,
Proposition~\ref{prop:gennbld} asserts that $\Sym_{\cC}^{[m']}
\Omega_{Z_\lambda}^1(\log \Delta_{\lambda})|_F$ is anti-nef, for all numbers
$m' \in \mathbb N^+$. It follows that
$$
\Sym^{[m']}_{\cC} \sA_\lambda|_F \subset \Sym_{\cC}^{[m' \cdot m]}
\Omega_{Z_\lambda}^1(\log \Delta_{\lambda})|_F
$$
is a subsheaf of an anti-nef bundle, hence anti-nef for all $m' \in \mathbb
N^+$. This clearly contradicts $\kappa_{\cC}(\sA_{\lambda})= \dim Z = 2$.

\subsubsection{Sub-case: $\kappa(K_Z + \Delta) = 0$}

In this case, the classical Abundance Theorem \cite[Sect.~3.13]{KM98} asserts
that there exists a number $n \in \mathbb N^+$ such that
\begin{equation}\label{eq:trs}
  \sO_{Z_\lambda}\bigl( n\cdot(K_{Z_{\lambda}} + \Delta_{\lambda})\bigr) \cong
  \sO_{Z_\lambda}.
\end{equation}

If the boundary divisor $\Delta_\lambda$ is empty, then the $\cC$-pair
$(Z_\lambda, \Delta_\lambda)$ is a logarithmic pair for trivial reasons, and
\cite[Prop.~9.1]{KK08c} implies that $\kappa(\sA_\lambda) \leq 0$, a
contradiction.  It follows that $\Delta_\lambda$ is \emph{not} empty.

For sufficiently small $\varepsilon_0 \in \mathbb Q^+$, we can therefore
consider the dlt pair $(Z_{\lambda}, (1-\varepsilon_0)\Delta_{\lambda})$.
Equation~\eqref{eq:trs} implies that $-\bigl(K_{Z_{\lambda}} +
(1-\varepsilon_0)\Delta_{\lambda}\bigr)$ is $\bQ$-effective. In particular, we
have that $\kappa \bigr(K_{Z_{\lambda}} + (1-\varepsilon_0)\Delta_{\lambda}
\bigl) = - \infty$. We can therefore run the minimal model program of the pair
$\bigl(Z_{\lambda}, (1-\varepsilon_0)\Delta_{\lambda} \bigr)$, in order to
obtain a birational morphism $\mu: Z_{\lambda} \to Z_{\mu}$ to a normal,
$\bQ$-factorial variety. Set $\Delta_{\mu} := \mu_*(\Delta_{\lambda})$. As
before, Lemma~\ref{VZbiratl} gives the existence of a Viehweg-Zuo sheaf
$\sA_{\mu} \subseteq \Sym^m_{\cC} \Omega_{Z_\mu}^1(\log \Delta_{\mu})$ with
$\kappa_{\cC} (\sA_{\mu}) = 2$.

To continue, observe that the map $\mu$ is also a minimal model program of the
pair $(Z_{\lambda}, (1-\varepsilon)\Delta_{\lambda})$, for any sufficiently
small number $\varepsilon \in \mathbb Q$. In particular, the pair
$\bigl(Z_{\mu}, (1-\varepsilon) \Delta_{\mu}\bigr)$ is dlt for all
$\varepsilon$, its Kodaira dimension is $\kappa \bigl(K_{Z_{\mu}}+
(1-\varepsilon)\Delta_{\mu} \bigr) = -\infty$, and the pair $(Z_{\mu},
\Delta_{\mu})$ is hence dlc \cite[9.4]{KK08c}, in particular log canonical. In
this setting, the arguments of the previous Section~\ref{ssec:kappaminfty}
apply verbatim.

\subsubsection{Sub-case: $\kappa(K_Z + \Delta) > 0$}

The Abundance Theorem guarantees the existence of a regular Iitaka-fibration
$\pi: Z_{\la} \to B$, such that $K_{Z_\lambda} + \Delta_\lambda$ is trivial on
the general fiber $F$. The same argumentation as in
Section~\ref{ssec:kappaminfty} applies to show that $\Sym^{[m']}_{\cC}
\sA_\lambda$ is anti-nef for all $m' \in \mathbb N^+$, contradicting
$\kappa_{\cC}(\sA_{\lambda})= \dim Z = 2$. This finishes the proof in the case
$\dim Z = 2$ and ends the proof of Theorem~\ref{thm:main}. \qed

\providecommand{\bysame}{\leavevmode\hbox to3em{\hrulefill}\thinspace}
\providecommand{\MR}{\relax\ifhmode\unskip\space\fi MR}
\providecommand{\MRhref}[2]{%
  \href{http://www.ams.org/mathscinet-getitem?mr=#1}{#2}
}
\providecommand{\href}[2]{#2}


\begin{thebibliography}{BHPVdV04}

\bibitem[BHPVdV04]{HBPV}
{\sc W.~P. Barth, K.~Hulek, C.~A.~M. Peters, and A.~Van~de Ven}: \emph{Compact
  complex surfaces}, second ed., Ergebnisse der Mathematik und ihrer
  Grenzgebiete. 3. Folge. A Series of Modern Surveys in Mathematics [Results in
  Mathematics and Related Areas. 3rd Series. A Series of Modern Surveys in
  Mathematics], vol.~4, Springer-Verlag, Berlin, 2004. {\sf\scriptsize
  MR2030225 (2004m:14070)}

\bibitem[Cam08]{Cam07}
{\sc F.~Campana}: \emph{Orbifoldes sp{\'e}ciales et classification
  bim{\'e}romorphe des vari{\'e}t{\'e}s {K}{\"a}hl{\'e}riennes compactes},
  preprint \href{http://arxiv.org/abs/0705.0737v5}{arXiv:0705.0737v5}, October
  2008.

\bibitem[EV92]{EV92}
{\sc H.~Esnault and E.~Viehweg}: \emph{Lectures on vanishing theorems}, DMV
  Seminar, vol.~20, Birkh\"auser Verlag, Basel, 1992. {\sf\scriptsize MR1193913
  (94a:14017)}

\bibitem[Fle84]{Flenner84}
{\sc H.~Flenner}: \emph{Restrictions of semistable bundles on projective
  varieties}, Comment. Math. Helv. \textbf{59} (1984), no.~4, 635--650.
  {\sf\scriptsize MR780080 (86m:14014)}

\bibitem[GKK08]{GKK08}
{\sc D.~Greb, S.~Kebekus, and S.~Kov\'acs}: \emph{Extension theorems for
  differential forms, and {B}ogomolov-{S}ommese vanishing on log canonical
  varieties}, Compos. Math. \textbf{146} (2010), no.~1, 193--219,
\href{http://dx.doi.org/10.1112/S0010437X09004321}{DOI:10.1112/S0010437X09004321}.
{\sf\scriptsize MR2581247}. 

\bibitem[GKKP10]{GKKP10} {\sc D.~Greb, S.~Kebekus, S.~Kov\'acs, and
    T.~Peternell}: \emph{Differential forms on log canonical spaces}, preprint
  \href{http://arxiv.org/abs/1003.2913}{arXiv:1003.2913}, March 2010.

\bibitem[Gro60]{EGA1}
{\sc A.~Grothendieck}: \emph{\'{E}l\'ements de g\'eom\'etrie alg\'ebrique. {I}.
  {L}e langage des sch\'emas}, Inst. Hautes \'Etudes Sci. Publ. Math. (1960),
  no.~4, 228. {\sf\scriptsize MR0217083 (36 \#177a)}

\bibitem[Har77]{Ha77}
{\sc R.~Hartshorne}: \emph{Algebraic geometry}, Springer-Verlag, New York,
  1977, Graduate Texts in Mathematics, No. 52. {\sf\scriptsize MR0463157 (57
  \#3116)}

\bibitem[JK09]{JK09}
{\sc K.~Jabbusch and S.~Kebekus}: \emph{Positive sheaves of differentials on
  coarse moduli spaces}, preprint
  \href{http://arxiv.org/abs/0904.2445}{arXiv:0904.2445}, April 2009.

\bibitem[KK08a]{KK08}
{\sc S.~Kebekus and S.~J. Kov{\'a}cs}: \emph{Families of canonically polarized
  varieties over surfaces}, Invent. Math. \textbf{172} (2008), no.~3, 657--682,
  \href{http://dx.doi.org/10.1007/s00222-008-0128-8}{DOI:10.1007/s00222-008-01%
28-8}. {\sf\scriptsize MR2393082}

\bibitem[KK08b]{KK08c}
{\sc S.~Kebekus and S.~J. Kov{\'a}cs}: \emph{The structure of surfaces and
  threefolds mapping to the moduli stack of canonically polarized varieties},
  preprint \href{http://arxiv.org/abs/0812.2305}{arXiv:0812.2305}, December
  2008. To appear in Duke Math J.

\bibitem[{\hbox{K}}{\hbox{M}}98]{KM98}
{\sc J.~{\hbox{K}}oll{\'a}r and S.~{\hbox{M}}ori}: \emph{Birational geometry of
  algebraic varieties}, Cambridge Tracts in Mathematics, vol. 134, Cambridge
  University Press, Cambridge, 1998, With the collaboration of C. H. Clemens
  and A. Corti, Translated from the 1998 Japanese original. {\sf\scriptsize
  2000b:14018}

\bibitem[OSS80]{OSS}
{\sc C.~Okonek, M.~Schneider, and H.~Spindler}: \emph{Vector bundles on complex
  projective spaces}, Progress in Mathematics, vol.~3, Birkh\"auser Boston,
  Mass., 1980. {\sf\scriptsize MR561910 (81b:14001)}

\bibitem[Vie95]{V95}
{\sc E.~Viehweg}: \emph{Quasi-projective moduli for polarized manifolds},
  Ergebnisse der Mathematik und ihrer Grenzgebiete (3) [Results in Mathematics
  and Related Areas (3)], vol.~30, Springer-Verlag, Berlin, 1995.
  {\sf\scriptsize MR1368632 (97j:14001)}

\bibitem[VZ02]{VZ02}
{\sc E.~Viehweg and K.~Zuo}: \emph{Base spaces of non-isotrivial families of
  smooth minimal models}, Complex geometry (G\"ottingen, 2000), Springer,
  Berlin, 2002, pp.~279--328. {\sf\scriptsize MR1922109 (2003h:14019)}

\end{thebibliography}
\end{document}